\newtheorem{theorem}{Theorem}[section]
\newtheorem{proposition}[theorem]{Proposition}
\newtheorem{lemma}[theorem]{Lemma}
\newtheorem{corollary}[theorem]{Corollary}
\theoremstyle{definition}
\newtheorem{definition}[theorem]{Definition}
\theoremstyle{remark}
\newtheorem{remark}[theorem]{Remark}
\newtheorem{example}[theorem]{Example}
\author{Christopher Severs}
\address{Reykjav\'{i}k University, Menntavegur 1, IS 101 Reykjav\'{i}k, Iceland}
\email{csevers@ru.is}
\thanks{
The first author was partially supported by an NSF grant DMS-0441170,
administered by the Mathematical Sciences Research Institute while the
author was in residence at MSRI during the Complementary program, Fall 2009 - Spring 2010.
This work was developed during the
visit of the author to MSRI and we thank the institute for its
hospitality.
}
\author{Jacob A. White}
\address{Mathematical Sciences Research Institute, Berkeley, CA}
\email{jawhite@msri.org}
\thanks{
The second author was partially supported by an NSF grant DMS-0932078,
administered by the Mathematical Sciences Research Institute while the
author was in residence at MSRI during the Complementary Program, Fall 2010.
This work was finished  during the
visit of the author to MSRI and we thank the institute for its
hospitality.
}
\subjclass[2000]{Primary 05E45}
\date{}
\keywords{Subspace Arrangements, Coxeter Groups, Discrete Morse Theory}
\title{On the Homology of the Real Complement of the $k$-Parabolic Subspace Arrangement}
\begin{document}

\begin{abstract}
In this paper, we study $k$-parabolic arrangements, a generalization of the $k$-equal arrangement for any finite real reflection group. 
When $k=2$, these arrangements correspond to the well-studied Coxeter arrangements. 
We construct a cell complex $Perm_k(W)$ that is homotopy equivalent to the complement. We then apply discrete Morse theory to obtain a minimal cell complex for the complement. 
As a result, we give combinatorial interpretations for the Betti numbers, and show that the homology groups are torsion free. We also study a generalization of the Independence 
Complex of a graph, and show that this generalization is shellable when the graph is a forest. This result is used in studying $Perm_k(W)$ using discrete Morse theory.

\end{abstract}

\maketitle

\section{Introduction}

A \emph{subspace arrangement} $\mathscr{A}$ is a collection of linear subspaces of a finite-dimensional vector space $V$, 
such that there are no proper containments among the subspaces. 
Examples of subspace arrangements include real and complex \emph{hyperplane arrangements}. 
One of the main questions regarding subspace arrangements is to study the structure of the 
\emph{complement} $\mathcal{M}(\mathscr{A}) = V - \cup_{X \in \mathscr{A}} X$. 
Many results regarding the homology and homotopy theory of $\mathcal{M}(\mathscr{A})$ can be found in the book \emph{Arrangements of Hyperplanes} by Orlik and Terao \cite{orlik-terao},
when $\mathscr{A}$ is a real or complex \emph{hyperplane} arrangement.

The base example that serves as motivation for this paper is the $k$-equal arrangement over $\mathbb{R}$. The $k$-equal arrangement, $\mathscr{A}_{n-1, k}$ is the collection of 
subspaces given by equations:
$$x_{i_1} = \ldots = x_{i_k}$$
over all distinct $1 \leq i_1 < i_2 < \ldots < i_k \leq n$. Note that this is a subspace arrangement over $\mathbb{R}^n$.
This subspace arrangement was originally investigated in connection with the $k$-equal problem: given $n$ real numbers, determine 
whether or not some $k$ of them are equal \cite{bjorner-lovasz}.

Given a topological space $X$, let $\widetilde{\beta}_i(X)$ denote the ranks of the torsion-free part of the $i$th singular reduced integral homology group. Given a real subspace arrangement $\mathscr{A}$, Bj\"orner and Lov\'asz showed that the minimum number of leaves in a linear decision tree 
that determines membership in $\mathscr{A}$ is at least $1 + 2 \sum_{i \geq 0} \widetilde{\beta}_i(\mathcal{M}(\mathscr{A}))$. Thus, knowing the homology of the complement gives lower bounds on the minimum depth of a linear decision tree which decides membership in $\mathscr{A}$.

A combinatorial tool that has proven useful in studying the complement is the intersection lattice, $\mathcal{L}(\mathscr{A})$, 
which is the lattice of intersections of subspaces, ordered by reverse inclusion. In particular, the work of Goresky and MacPherson gives an isomorphism, 
known as the Goresky MacPherson formula, that allows one to translate the problem of 
determining the homology groups of $\mathcal{M}(\mathscr{A})$ into the problem of studying certain groups related to $\mathcal{L}(\mathscr{A})$. 

\begin{theorem}[Theorem III.1.3 in Stratified Morse Theory \cite{goresky-macpherson}]
Let $\mathscr{A}$ be a real linear subspace arrangement.
We have the following isomorphism:
$$H^i(\mathcal{M}(\mathscr{A})) \cong \bigoplus_{x \in \mathcal{L}(\mathscr{A})_{> \hat{0}}} H_{n- \dim(x) -i - 2}(\Delta(\hat{0}, x))$$
where $\Delta(\hat{0}, x)$ is the order complex of the interval $[\hat{0}, x]$, and dimension refers to dimension over $\mathbb{R}$.

\end{theorem}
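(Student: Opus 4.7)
The plan is to prove the formula via Alexander duality combined with a combinatorial description of the homotopy type of the singular locus $V = \bigcup_{X \in \mathscr{A}} X \subseteq \mathbb{R}^n$. Embedding $\mathbb{R}^n$ as $S^n \setminus \{\infty\}$, the complement $\mathcal{M}(\mathscr{A})$ becomes an open subset of $S^n$ whose closed complement is the one-point compactification $V^+$. Alexander duality then yields
\[
\widetilde{H}^i(\mathcal{M}(\mathscr{A})) \cong \widetilde{H}_{n-i-1}(V^+),
\]
reducing the problem to computing the reduced homology of $V^+$.

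The second step is to identify $V^+$ up to homotopy with a wedge sum indexed by the intersection lattice. Using $\mathcal{L}(\mathscr{A})$ to stratify $V$, one organizes the union as a homotopy colimit of a diagram indexed by $\mathcal{L}(\mathscr{A})_{>\hat 0}$, and a Mayer--Vietoris / nerve-style argument (essentially a Ziegler--\v{Z}ivaljevi\'{c} type theorem) gives a homotopy equivalence
\[
V^+ \simeq \bigvee_{x \in \mathcal{L}(\mathscr{A})_{>\hat 0}} \Delta(\hat 0, x) \ast S^{\dim x},
\]
where $\ast$ denotes the topological join. I would prove this by induction on $|\mathcal{L}(\mathscr{A})|$: remove a maximal element $x_0$, form the analogous space for the subarrangement whose lattice is $\mathcal{L}(\mathscr{A}) \setminus \{x_0\}$, and identify the attaching pushout for the new stratum with the cone on $\Delta(\hat 0, x_0) \ast S^{\dim x_0 - 1}$ inside the resulting space.

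Given this decomposition, the remainder of the argument is formal. The join identity $\widetilde{H}_k(A \ast B) \cong \bigoplus_{p+q = k-1} \widetilde{H}_p(A) \otimes \widetilde{H}_q(B)$ applied with $B = S^{\dim x}$, whose reduced homology is concentrated in degree $\dim x$ and is torsion-free, collapses to
\[
\widetilde{H}_k\bigl( \Delta(\hat 0, x) \ast S^{\dim x} \bigr) \cong \widetilde{H}_{k - \dim x - 1}(\Delta(\hat 0, x)).
\]
Summing over $x \in \mathcal{L}(\mathscr{A})_{>\hat 0}$ and setting $k = n - i - 1$ produces $\bigoplus_{x > \hat 0} \widetilde{H}_{n - \dim x - i - 2}(\Delta(\hat 0, x))$, which matches the right-hand side of the theorem.

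The principal obstacle is establishing the wedge decomposition of $V^+$. This is where the combinatorics of $\mathcal{L}(\mathscr{A})$ genuinely enter: one must show that what is a priori a filtered space built by iterated cofibrations is actually equivalent to a wedge, which requires verifying that the attaching maps become null-homotopic after passing to the appropriate quotients. The delicate bookkeeping is the dimensional shift by $\dim x$ and the appearance of the \emph{open} interval $(\hat 0, x)$ rather than the closed one; both arise naturally from the local picture near a generic point of the stratum labelled by $x$, where the arrangement splits as a transverse lower-dimensional arrangement crossed with a Euclidean factor of dimension $\dim x$, and the transverse piece contributes $\Delta(\hat 0, x)$ while the Euclidean factor contributes $S^{\dim x}$ after one-point compactification.
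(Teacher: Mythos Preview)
The paper does not prove this theorem at all: it is quoted as Theorem~III.1.3 from Goresky and MacPherson's book \emph{Stratified Morse Theory} and used as a black box. So there is no ``paper's own proof'' to compare against; the relevant comparison is with the original argument of Goresky and MacPherson.

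Your outline is a correct and well-known alternative route. Alexander duality in $S^n$ reduces the problem to the reduced homology of the one-point compactification $V^+$ of the singular set, and since $V$ is a cone over $V\cap S^{n-1}$, the space $V^+$ is the unreduced suspension of that link. The wedge decomposition you invoke is exactly the Ziegler--\v{Z}ivaljevi\'c theorem for the link of a linear arrangement, and once that is in hand the homology calculation via the join-with-sphere suspension identity is routine, giving the stated shift $n-\dim(x)-i-2$.

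This is genuinely different from Goresky and MacPherson's proof, which proceeds by stratified Morse theory: one chooses a Morse function on the ambient sphere adapted to the stratification by the arrangement, and tracks how the homotopy type of the complement changes as one passes each critical value. The local contribution at a critical point lying on the stratum $x$ involves the normal Morse data, which is where the order complex of $(\hat 0,x)$ and the codimension shift enter. Their argument is geometric and yields the formula stratum by stratum; yours is global, trading the Morse-theoretic analysis for a single homotopy-colimit decomposition of $V^+$. The Ziegler--\v{Z}ivaljevi\'c approach is arguably more elementary once one accepts the diagram-of-spaces machinery, but it postdates the original and was in fact developed partly to give a combinatorial reproof of exactly this formula. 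Your identification of the main obstacle---verifying that the filtered space is actually a wedge, i.e.\ that the attaching maps are null---is accurate; that is the content of the Ziegler--\v{Z}ivaljevi\'c argument, and it does require some care with the conical structure of the strata.
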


In \cite{bjorner-welker}, Bj\"orner and Welker used the Goresky MacPherson formula to determine the cohomology groups of the complement of the $k$-equal arrangement. They showed that the groups are torsion free, and trivial in dimensions that are not a multiple of $k-2$. They also obtained formulas for the homology groups of the order complex of $\mathcal{L}(\mathscr{A}_{n,k})$. 
Similar formulas were obtained by Bj\"orner and Wachs \cite{bjorner-wachs} by showing $\mathcal{L}(\mathscr{A}_{n,k})$ has an $EL$-labeling, and thus is shellable. We note that Bj\"orner and Wachs extended the definition of shellability to non-pure posets in order to obtain their results. 

Type $B$ and $D$ analogues of the $k$-equal arrangement were studied by Bj\"orner and Sagan \cite{subspacesBD}. Their type $B$ analogue is denoted $\mathscr{B}_{n,k,h}$, and their type $D$ analogue is denoted $\mathscr{D}_{n,k}$. The arrangement $\mathscr{D}_{n,k}$ consists of subspaces given by equations:

$$\epsilon_1 x_{i_1} = \cdots = \epsilon_k x_{i_k}$$

over all $\{i_1, \ldots, i_k \} \subset [n]$ and all $(\epsilon_1, \ldots, \epsilon_k) \in \{+,-\}^k$. The arrangement $\mathscr{B}_{n,k,h}$ consists of $\mathscr{D}_{n,k}$ and new subspaces given by equations:

$$x_{i_1} = \cdots = x_{i_h} = 0$$

over all $\{i_1, \ldots, i_h\} \subset [n]$. 

Bj\"orner and Sagan showed that the 
intersection lattice $\mathcal{L}(\mathscr{B}_{n,k,h})$ has an $EL$-labeling, and obtained results regarding the cohomology of $\mathcal{M}(\mathscr{B}_{n,k,h})$. Their methods did not extend to $\mathscr{D}_{n,k}$. Kozlov and Feichtner \cite{subspacesD} obtained results regarding the cohomology of the complement of $\mathscr{D}_{n,k}$, by showing $\mathcal{L}(\mathscr{D}_{n,k})$ had an $EC$-labeling, a notion due to Kozlov \cite{kozlov-orbit}.

We see that using the Goresky MacPherson formula in general is a challenge. It translates the problem of studying the cohomology groups of the complement into a problem of finding the homology groups of the order complex
of the intersection lattice. Most of these examples involved expanding previously known methods in order to find equations for the Betti numbers. Also, in most cases, the problem of writing down an explicit 
general formula for the Betti numbers of the order complex is very difficult. In the end, one does not get much intuition regarding what these Betti numbers are actually counting.
Finally, the labelings involved for the type $D$ $k$-equal arrangement are very different from the labelings used for $\mathscr{B}_{n,k,h}$, so one might be skeptical about giving a uniform proof for a generalization of these 
arrangements given for arbitrary reflection group.

In a previous paper \cite{full-version}, we introduced a generalization of the $k$-equal arrangement associated to any finite Coxeter group $W$. We denote this arrangement, called the $k$-parabolic arrangement, by $\mathscr{W}_{n,k}$. These arrangements correspond to orbits of subspaces fixed by irreducible parabolic subgroups of rank $k-1$. In \cite{full-version}, we studied the fundamental group of the complement of these arrangements, generalizing work that had been done by Khovanov \cite{khovanov} for the $3$-equal arrangements of types $A$, $B$, and $D$. In this paper, we study the integral homology groups of the complement. 

Given the challenges of proving shellability of the intersection lattice for the previously studied cases, we choose to study the homology groups using a different approach. Fix a finite Coxeter group $W$ of rank $n$, and let $3 \leq k \leq n$. First, we note that the $k$-parabolic arrangement is always embedded in the corresponding reflection arrangement of $W$. Using a construction due to Solomon, we obtain a cell complex $Perm_k(W)$, that is homotopy equivalent to $\mathcal{M}(\mathscr{W}_{n,k})$.
Then we use Forman's discrete Morse theory \cite{formanmorse} to study $Perm_k(W)$. We note that discrete Morse theory has been applied multiple times in recent years in topological combinatorics. It has also been applied to study complements of complex hyperplane arrangements \cite{salvetti-settepanella}. This marks the first attempt to use discrete Morse theory to study the complement of a subspace arrangement. In particular, we obtain a minimal cell complex for $Perm_k(W)$: that is, a homotopy equivalent cell complex with exactly $\beta_i(Perm_k(W))$ cells of dimension $i$.

\begin{theorem}
\label{thm:minimality}
 There exists a minimal cell complex $M_k(W)$ such that $Perm_k(W) \cong M_k(W)$.
\end{theorem}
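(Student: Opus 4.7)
The strategy is to apply Forman's discrete Morse theory to the face poset of $Perm_k(W)$ and produce a Morse complex whose critical cells in each dimension $i$ realize $\beta_i(Perm_k(W))$. Since $Perm_k(W)$ is built from Solomon's cell structure, its cells are indexed combinatorially by data attached to parabolic cosets of $W$. I would first describe this face poset in purely combinatorial terms, so that pairings and incidences among cells can be manipulated symbolically without appealing to the ambient geometry.

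Next, I would construct an explicit acyclic matching on this face poset. The natural idea is to stratify the cells according to some combinatorial invariant, for example a notion of ``type'' or ``support'' of a cell, and define local matchings on each stratum. The local matchings on the relevant strata should come from shellings of the generalized independence complex of a forest, which is the auxiliary result advertised in the abstract; a shelling always induces an acyclic matching whose critical cells are (essentially) the restrictions of the facets, and so the computation of critical cells reduces to an enumeration of facets of these independence complexes. I would then glue the local matchings via the Patchwork/Cluster Lemma, so that acyclicity on each stratum plus acyclicity on the quotient poset yields global acyclicity on the face poset of $Perm_k(W)$.

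Having obtained a Morse complex with a combinatorially describable set of critical cells, I would identify these critical cells with a basis for the integral homology. Minimality is stronger than the Morse inequalities: it requires that the number of critical cells in each dimension equal $\beta_i$ exactly. To force this, I would argue that all Morse boundary maps vanish, either by a parity/dimension argument showing that no gradient path can connect critical cells in consecutive dimensions, or by matching the count of critical cells against Betti numbers extracted from the Goresky--MacPherson formula. Torsion-freeness of $H_*(Perm_k(W))$, which the paper announces in parallel, is compatible with — and in fact supports — the vanishing of the integer Morse differentials.

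The main obstacle will be constructing the acyclic matching uniformly across all finite reflection groups $W$. Earlier work (Bj\"orner--Sagan in type $B$, Kozlov--Feichtner in type $D$) required genuinely different labelings for each Coxeter type; the shellability of the generalized independence complex of a forest is the technical bridge that should permit a type-free construction. Verifying that these shellings assemble coherently into a single acyclic matching on $Perm_k(W)$, and that the resulting enumeration of critical cells matches the Betti numbers exactly rather than merely bounding them, is where the bulk of the work will sit.
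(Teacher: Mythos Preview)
Your outline matches the paper's approach closely: the stratification is the order-preserving map $\varphi\colon \mathscr{F}(Perm_k(W))\to W$ (right weak order) sending a parabolic coset to its maximal-length representative, the fibers $\varphi^{-1}(w)$ are exactly the face posets of $Ind_{k-2}(D[Des(w)])$, and the shelling of those forest-independence complexes, glued via the Cluster Lemma, is precisely the paper's matching.

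There is, however, a genuine gap in your endgame. The ``parity/dimension'' argument---that no two critical cells sit in consecutive dimensions---works only for $k>3$, where critical cells live in dimensions $t(k-2)$ and $k-2\geq 2$. For $k=3$ the critical cells occur in \emph{every} dimension $0,1,\ldots,\lfloor n/3\rfloor$, so gradient paths between consecutive-dimensional critical cells certainly exist and the Morse differential is not trivially zero. The paper devotes an entire section (Section~\ref{sec:optimality}) to this case: it exploits that $Perm_3(W)$ is a cubical complex, builds explicit pairs of alternating paths $p(wW_I,s)$ and $\hat p(wW_I,s)$ with cancelling weights, proves that every gradient path between critical cells decomposes into such pieces (Proposition~\ref{prop:pathpattern}), and assembles a sign-reversing involution on gradient paths to force $[\tau:\sigma]=0$. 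This is the technical heart of the minimality claim and is not covered by anything in your sketch.

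Your fallback---matching the critical-cell count against Betti numbers from Goresky--MacPherson---would in principle force the Morse differentials to vanish, but it is not available here: for the exceptional groups (and arguably for type $D$ beyond what Kozlov--Feichtner did) those Betti numbers were not known independently, and the paper's explicit aim is a uniform, type-free argument that \emph{produces} the Betti numbers rather than consuming them. So for $k=3$ you really do need a direct analysis of the Morse boundary operator.
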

We note that when $k = 3$, $\mathcal{M}(\mathscr{W}_{n,3})$ is not simply connected. For simply-connected topological spaces $X$, there is already a well-known construction \cite{hatcher} of a minimal cell complex that is homotopy equivalent to $X$. To our knowledge, Theorem \ref{thm:minimality} is the first result regarding existence of minimal cell complexes for real subspace arrangements whose complements 
have nontrivial fundamental groups.

We hope that $M_k(W)$ can be used to study the cohomology ring structure of $\mathcal{M}(\mathscr{W}_{n,k})$, something that cannot be done using the Goresky-MacPherson formula. However, for this paper, however, we focus on obtaining information about the homology groups from $M_k(W)$.
Let $H_{i,k}(W)$ be the 
$i$th singular homology group of $\mathcal{M}(\mathscr{W}_{n,k})$, and let $\beta_{i,k}(W)$ be the rank of the torsion-free part of 
$H_{i,k}(W)$. Then we obtain the following results.

\begin{theorem}
 \label{thm:homology}
Let $W$ be a Coxeter group of rank $n$, and let $3 \leq k \leq n$. Then the following holds:
\begin{enumerate}
 \item $H_{i,k}(W)$ is torsion-free.
\item $H_{i,k}(W)$ is trivial unless $i = t(k-2)$ for some $0 \leq t \leq \frac{n}{k}$.
\end{enumerate}

\end{theorem}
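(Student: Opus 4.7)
The plan is to derive both statements from Theorem \ref{thm:minimality}. Part (1) is a formal consequence of minimality: in any cellular chain complex of free abelian groups $(C_*,\partial_*)$, the identity $\beta_i = \mathrm{rank}(C_i) - \mathrm{rank}(\partial_i) - \mathrm{rank}(\partial_{i+1})$ holds regardless of torsion in the homology, because torsion does not affect ranks. If $M_k(W)$ has exactly $\mathrm{rank}(C_i) = \beta_{i,k}(W)$ cells in each dimension $i$, summing this identity over $i$ forces $\mathrm{rank}(\partial_i) = 0$ for every $i$. Hence every boundary map vanishes, and $H_{i,k}(W) \cong C_i \cong \mathbb{Z}^{\beta_{i,k}(W)}$, which is torsion-free. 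This reduces the entire theorem to showing that the minimal complex has cells only in dimensions of the form $t(k-2)$ with $0 \leq t \leq n/k$.

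For part (2) I would exploit the specific discrete Morse function used to construct $M_k(W)$. The cells of $Perm_k(W)$ arise from Solomon's construction: they are indexed by pairs consisting of a face of the Coxeter complex together with combinatorial data encoding the face's position relative to $\mathscr{W}_{n,k}$. I would build an acyclic matching inductively along the parabolic structure of $W$, using the shellability of the generalized independence complex of a forest (established earlier in the paper) to pair off all non-critical cells. Each surviving critical cell should correspond to a disjoint collection of $t$ rank-$(k-1)$ irreducible parabolic sub-diagrams of the Coxeter diagram, each contributing exactly $k-2$ to the dimension. Since such a family uses at least $tk$ of the $n$ simple generators, the dimension of a critical cell must equal $t(k-2)$ with $0 \leq t \leq n/k$, and no cells of $M_k(W)$ can live in any other dimension.

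The main obstacle is verifying acyclicity of the matching globally rather than only on individual strata, while simultaneously controlling the dimensions of the critical cells. Pairing cells locally via the shelling of the relevant generalized independence complex is not by itself enough: one must ensure that no directed cycles arise when the local matchings are combined, and that every non-critical cell is indeed paired. The shellability result for forests is designed precisely to handle this, because once a family of rank-$(k-1)$ parabolic sub-diagrams is removed from the Coxeter graph the remaining subgraph is a forest, so the associated generalized independence complex is shellable and provides the compatible element matching required to confine all critical cells to dimensions $t(k-2)$. Once this is established, the rank argument from the first paragraph simultaneously yields torsion-freeness, completing both parts of the theorem.
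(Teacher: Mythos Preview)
Your overall strategy is correct and coincides with the paper's. Both parts follow from the optimal acyclic matching on $Perm_k(W)$: torsion-freeness from the vanishing of the boundary operator in the Morse complex $\Delta_c$ (your rank argument is a clean way to phrase this, equivalent to what the paper does), and the dimension restriction from the description of the critical cells in Proposition~\ref{prop:critcells}.

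Several details in your sketch of part~(2) need adjusting. First, the critical cosets $wW_I$ have $D[I]$ decomposing into $t$ connected pieces each of size $k-2$, not ``rank-$(k-1)$ irreducible parabolic sub-diagrams''; since $\dim(wW_I)=|I|$, this is precisely why each piece contributes $k-2$ to the dimension. Second, global acyclicity is not handled by an inductive gluing along parabolic strata but by the Cluster Lemma (Lemma~\ref{lem:clusterlem}) applied to the order-preserving map $\varphi\colon\mathscr{F}(Perm_k(W))\to W$ sending a coset to its longest representative, with $W$ given the right weak order; the fiber over $w$ is the face poset of $Ind_{k-2}(D[Des(w)])$. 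Since the Dynkin diagram of a finite Coxeter group is itself a forest, so is every induced subgraph $D[Des(w)]$, and the shellability of Theorem~\ref{thm:shellability} applies directly---your remark that ``once a family of sub-diagrams is removed the remaining subgraph is a forest'' inverts the logic. Third, your justification for the upper bound on $t$ (``such a family uses at least $tk$ of the $n$ simple generators'') does not follow from the description of the critical cells: the $t$ components of size $k-2$ together with their distinct back-neighbors in $Des(w)$ occupy only $t(k-1)$ simple reflections, not $tk$.
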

We note that these results were obtained when $W$ is of type $A$, $B$ or $D$ by Bj\"orner and Welker \cite{bjorner-welker}, 
Bj\"orner and Sagan \cite{subspacesBD}, and Kozlov and Feichter \cite{subspacesD}, respectively.

\begin{figure}[htbp]

 \begin{center}
  \includegraphics[height=10cm]{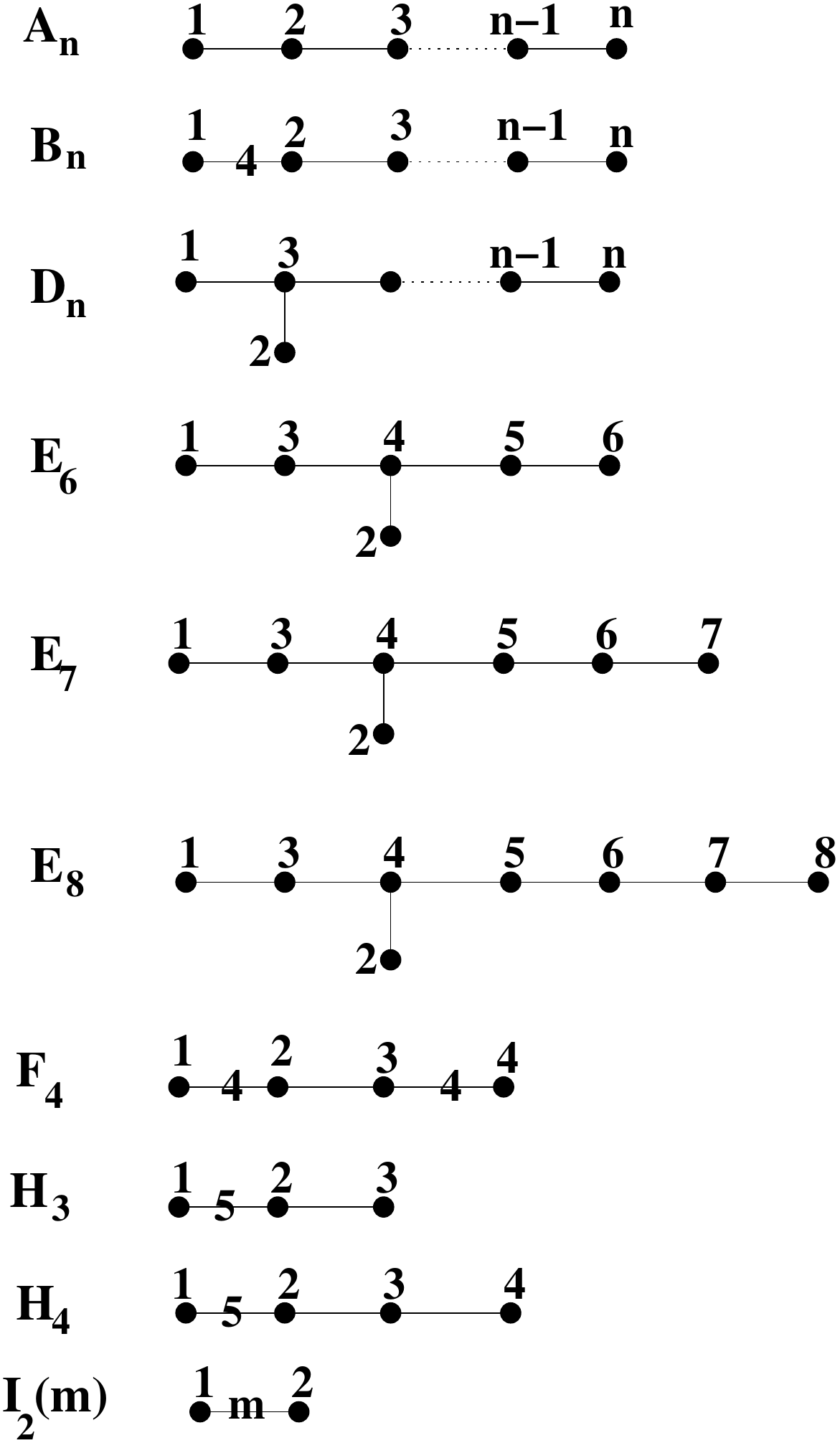}
 \caption{linear orders for irreducible Coxeter groups}
 \end{center}
\label{fig:linearorder}
\end{figure}

We also have a combinatorial interpretation of the Betti numbers.
For now, let $W$ be irreducible, with set of simple reflections $S$. Let $D$ be the Dynkin diagram for $W$. Furthermore, suppose $S$ is linearly ordered according to the numbering of verices appearing in Figure \ref{fig:linearorder}. Finally, given a set $T \subset S$ let $\mathscr{C}$ be the vertex sets of connected components of $D[T]$, the subgraph of the Dynkin diagram induced by T. Given a component $C \in \mathscr{C}$, let $N^<_{\mathscr{C}}(C)$ be the set of vertices of $D$ that are not in any component, are adjacent to some vertex of $C$, occur in the linear order on $S$ before any of the vertices of $C$, and are not adjacent to any vertex of any other component of $D[T]$. Finally, given $w \in W$, let $Des(w)$ be the 
descent set of $w$. Then we obtain the following:
\begin{theorem}
\label{thm:bettis}
 Let $W$ be an irreducible Coxeter group of rank n, and let $3 \leq k \leq n$. Let $S$ be ordered as in Figure \ref{fig:linearorder}, and let 
$0 \leq t \leq \frac{n}{k}$ be an integer. Then $\beta_{t(k-2), k}(W)$ is the number of pairs $(w, T)$ such that:
\begin{enumerate}
\item $w \in W$, $T \subset Des(w)$.
 \item $D[T]$ has $t$ components $\mathscr{C} = \{C_1, \ldots, C_k \}$, each of size $k-2$
\item For every component $C$ of $D[T]$, we have $N^<_{\mathscr{C}}(C) \cap Des(w) \neq \emptyset$
\item For every $v \in Des(w)$, $v$ is adjacent to some component of $D[T]$.
\end{enumerate}

\end{theorem}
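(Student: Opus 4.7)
The plan is to apply Theorem~\ref{thm:minimality} and count critical cells: since $M_k(W)$ is a minimal cell complex homotopy equivalent to $Perm_k(W)$, its number of $t(k-2)$-dimensional cells is exactly $\beta_{t(k-2),k}(W)$, so it suffices to construct a discrete Morse matching on $Perm_k(W)$ whose critical cells of dimension $t(k-2)$ are in bijection with the pairs $(w,T)$ of the statement. The cells of $Perm_k(W)$ coming from Solomon's construction are indexed by pairs $(wW_T,T)$ where $T\subseteq S$ has every connected component of $D[T]$ of size at most $k-2$, with dimension $|T|$. Labeling each cell by the longest-length representative $w$ of its coset makes $T\subseteq Des(w)$ automatic, so condition (1) of the statement is built into the indexing.

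To define the matching I would process the simple reflections of $S$ in the linear order of Figure~\ref{fig:linearorder}, pairing each non-critical cell $(w,T)$ via the smallest $s\in S$ that admits a legal move, where a move either adds $s$ to $T$ or removes $s$ from $T$ (adjusting $w$ as needed to retain the longest-representative convention). The rules are chosen so that failure of the removal rule at every $s\in T$ translates to condition (3)---each component $C$ of $D[T]$ must have $N^<_{\mathscr{C}}(C)\cap Des(w)\neq\emptyset$---and failure of the addition rule at every $s\in Des(w)\setminus T$ translates to condition (4)---every descent outside $T$ is adjacent to some component of $D[T]$. A critical cell is one where both rules fail for every $s\in S$, which in addition forces every component of $D[T]$ to be saturated at size $k-2$, since an undersized component would leave an adjacent descent available to be added. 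The dimension requirement $|T|=t(k-2)$ then forces exactly $t$ components of size $k-2$, producing condition (2).

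The principal obstacle I anticipate is proving acyclicity of the matching. This is where the companion result of the paper on the generalized independence complex enters: the admissible subsets $T\subseteq S$ form a simplicial complex that is shellable because the Dynkin diagram is a forest, and combining the shelling order with the weak order on $W$ furnishes the linear extension that witnesses acyclicity. A secondary technical issue is verifying that the matching is a genuine involution under the longest-representative convention: adding or removing $s$ may shift the longest representative of the new coset, and one must confirm that the pairing is consistent and that the characterization of critical cells is insensitive to this bookkeeping. Once acyclicity and well-definedness are established, the four conditions describe the critical cells exactly, and the count of such pairs $(w,T)$ with $|T|=t(k-2)$ yields the stated formula for $\beta_{t(k-2),k}(W)$.
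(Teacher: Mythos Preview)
Your approach is the paper's: the map $\varphi\colon wW_T\mapsto w$ (longest representative) to $(W,\text{weak order})$ has fibers isomorphic to $Ind_{k-2}(D[Des(w)])$, the shelling of that complex supplies the fiber matchings, the Patchwork Theorem assembles them into an acyclic matching on $Perm_k(W)$, and Proposition~\ref{prop:critcells} translates the spanning-simplex description of Theorem~\ref{thm:spanningcells} into exactly conditions (1)--(4). Two small corrections: your ``secondary technical issue'' is a non-issue, since adding or removing $s\in Des(w)$ leaves $w$ the longest representative of the new coset and the matching never leaves the fiber $\varphi^{-1}(w)$; and invoking Theorem~\ref{thm:minimality} up front is circular, because in the paper that theorem \emph{is} the optimality of this very matching---automatic for $k>3$ from the dimension gaps, but for $k=3$ requiring the sign-reversing involution on alternating paths in Section~\ref{sec:optimality}.
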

We note that for classical reflection groups, this interpretation can be described more explicitly. Moreover, we obtain new formulas for the Betti numbers corresponding to types 
$A$, $B$, and $D$. Finally, Theorem \ref{thm:bettis} also holds for any finite Coxeter group, and for a much larger class of linear orders than just the linear orders mentioned in 
Figure \ref{fig:linearorder}. We mention more regarding these facts in Section \ref{sec:matchalg}.

Fix a finite Coxeter group $W$, let $n$ be the rank of the Coxeter group, $S$ be the set of simple reflections, and $D$ be the Dynkin diagram. Finally, unless otherwise noted, 
$k$ is a fixed integer with $3 \leq k \leq n$.

\section{Definition of $k$-Parabolic Arrangement}
\label{sec:defkpar}

Since we are generalizing the $k$-equal arrangement, which corresponds to the case $W = A_n$, 
we use it as our motivation. In this paper, we actually work with the essentialized $k$-equal arrangement. 
Recall that the $k$-equal arrangement, $\mathscr{A}_{n,k}$, is the collection of all subspaces given by 
$x_{i_1} = x_{i_2} = \ldots = x_{i_k}$ over all indices $\{i_1, \ldots, i_k\} \subset [n+1]$, 
with the additional relation $\sum_1^{n+1}x_i = 0$.  The intersection poset 
$\mathcal{L}(\mathscr{A}_{n,k})$ is a subposet of $\mathcal{L}(\mathscr{H}(A_n))$. 
There is already a well-known combinatorial description of both of these posets. 
The poset of all set partitions of $[n+1]$ ordered by refinement is isomorphic to $\mathcal{L}(\mathscr{H}(A_n))$, 
and under this isomorphism, $\mathcal{L}(\mathscr{A}_{n,k})$ is the subposet of set partitions 
where each block is either a singleton, or has size at least $k$. 
However, our generalization relies on 
the Galois correspondence of Barcelo and Ihrig \cite{barcelo-ihrig}.

Given $I \subset S$, let $W_I$ be the subgroup generated by the reflections of 
$I$. Such a subgroup is called a \emph{standard parabolic subgroup}. A standard parabolic subgroup is \emph{irreducible} if $(W_I, I)$ is an irreducible Coxeter system. Given the Dynkin diagram, 
a subset $I \subset S$ corresponds to an irreducible standard parabolic subgroup if and only if the subgraph induced by $I$ is a connected graph.
 Any conjugate of a standard parabolic subgroup is called a \emph{parabolic subgroup}. 
We say a given subgroup is a $k$-parabolic subgroup if it is the conjugate of an \emph{irreducible} parabolic subgroup of rank $k-1$.  
Given a parabolic subgroup $G$, let $Fix(G) = \{x \in \mathbb{R}^n: wx = x, \mbox{ for all } w \in W \}$. Given a subspace $X$, let $Gal(X) = \{w \in W: wx = x \mbox{ for all } x \in X \}$. 
Let $\mathscr{P}(W)$ be the collection of all parabolic subgroups, ordered by inclusion.
Barcelo and Ihrig proved the following:

\begin{theorem}[Theorem 3.1 in \cite{barcelo-ihrig}]
\label{thm:Galois}
 The maps $G \to Fix(G)$ and $X \to Gal(X)$ defined above are lattice isomorphisms between 
$\mathscr{P}(W)$ and $\mathcal{L}(\mathscr{H}(W))$.
\end{theorem}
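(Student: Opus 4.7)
The plan is to show that the two maps are well-defined, mutually inverse, and order-preserving, with the key ingredient being Steinberg's fixed point theorem: for a finite real reflection group $W$, the pointwise stabilizer of any subset of the ambient space is generated by the reflections it contains, and is therefore itself a parabolic subgroup.

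For well-definedness of $Fix$, if $G = W_I$ is standard parabolic, then $Fix(W_I)$ is the intersection of the reflecting hyperplanes $H_s$ for $s \in I$, hence an element of $\mathcal{L}(\mathscr{H}(W))$. For a conjugate $wW_Iw^{-1}$, the fixed space is $w \cdot Fix(W_I)$, which is still an intersection of reflecting hyperplanes because the reflection arrangement is $W$-invariant. Well-definedness of $Gal$ is precisely Steinberg's theorem: $Gal(X)$ is the pointwise stabilizer of $X$, hence a parabolic subgroup.

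Next I would show the maps are inverse. The containment $X \subseteq Fix(Gal(X))$ is immediate; writing $X = H_{s_1} \cap \cdots \cap H_{s_m}$, each reflection $s_i$ lies in $Gal(X)$, so $Fix(Gal(X)) \subseteq \bigcap_i H_{s_i} = X$, giving the reverse inclusion. Similarly $G \subseteq Gal(Fix(G))$ is trivial, and Steinberg's theorem tells us that $Gal(Fix(G))$ is a parabolic subgroup; a rank comparison (the rank of a parabolic subgroup equals the codimension of its fixed subspace) then forces equality. Finally, if $G \subseteq H$ in $\mathscr{P}(W)$ then $Fix(G) \supseteq Fix(H)$ as subspaces of $V$, which is $Fix(G) \leq Fix(H)$ under the reverse-inclusion ordering on $\mathcal{L}(\mathscr{H}(W))$, so $Fix$ is an order-preserving bijection with order-preserving inverse, hence a lattice isomorphism.

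The main obstacle is Steinberg's fixed point theorem, which is the deep input: it is what promotes the set-theoretic stabilizer $Gal(X)$ from an arbitrary subgroup into an honest parabolic subgroup, and it also supplies the rank-codimension correspondence needed to pin down $Gal(Fix(G)) = G$. Once this is in hand, everything else is direct verification from the definitions.
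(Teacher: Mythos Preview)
The paper does not supply its own proof of this theorem: it is quoted verbatim as Theorem~3.1 from Barcelo--Ihrig \cite{barcelo-ihrig} and used as a black box. So there is no in-paper argument to compare against.

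Your outline is the standard route and is correct. The substantive content is exactly Steinberg's fixed-point theorem (the pointwise stabilizer in $W$ of any subset of $V$ is generated by the reflections it contains, hence parabolic), which you have correctly identified as the key input. One small remark: for the step $Gal(Fix(G)) = G$, rather than invoking a separate rank--codimension fact, it is slightly cleaner to reduce to the standard case $G = W_I$ (by conjugation) and then observe directly that any reflection fixing $Fix(W_I) = \bigcap_{s\in I} H_s$ pointwise must lie in $W_I$; this is essentially the content of the parabolic subgroup theory in Humphreys~\cite{coxbook}, \S1.12--1.15, and avoids having to argue that two parabolics of equal rank with one contained in the other must coincide. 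Either way the argument goes through.
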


\begin{figure}[htbp]

 \includegraphics[height=4cm]{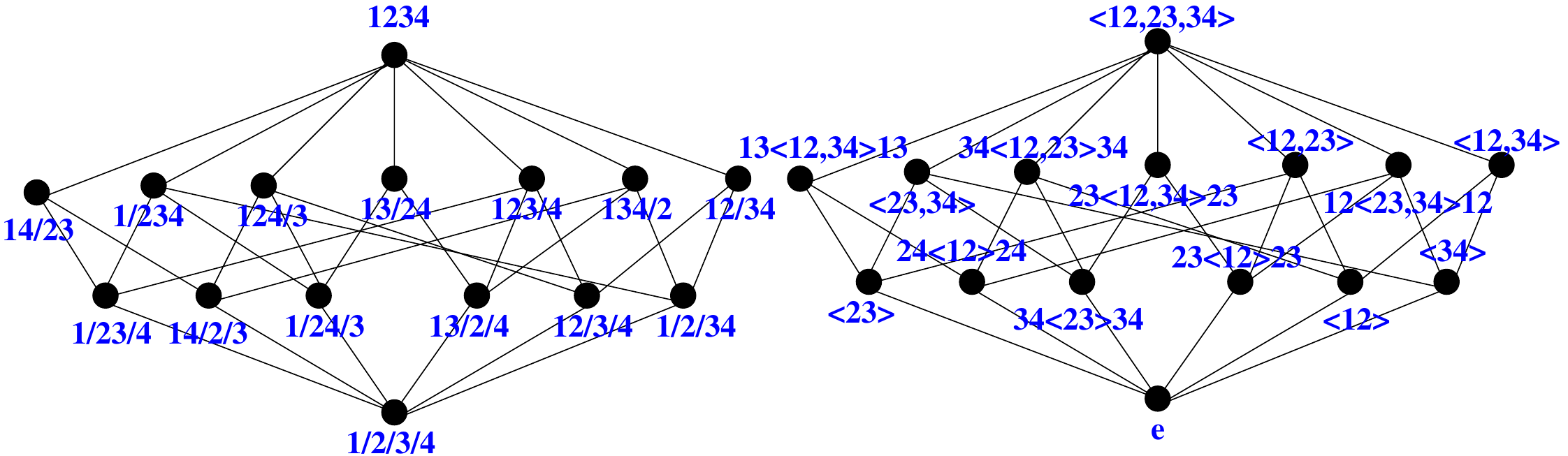}

\caption{The Galois Correspondence for $A_3$}
\label{fig:partlattice}
\end{figure}

\begin{definition}
Let $W$ be a finite real reflection group of rank $n$, and let $2 \leq k \leq n$.
Let  $\mathscr{P}_{n,k}(W)$ be the collection of \emph{all} irreducible parabolic subgroups of $W$ of rank $k-1$.

Then the $k$-parabolic arrangement $\mathscr{W}_{n,k}$ is the collection of subspaces 
$$\{ Fix(G): G \in \mathscr{P}_{n,k}(W) \}.$$
\end{definition}

Note that, given $W$, $2 \leq j \leq k \leq n$, $\mathscr{W}_{n,k}$ is embedded in $\mathscr{W}_{n,j}$. Moreover, the arrangement 
is invariant under the action of $W$. Note that in this paper, we will usually assume $k \neq 2$, as in this case we obtain the Coxeter arrangement. 
The complement of the Coxeter arrangement consists of $|W|$ disjoint regions, with no nontrivial homology above dimension $0$.
Some examples of $\mathscr{W}_{n,k}$ are given in Table \ref{tbl:kparabolic}.

\begin{table}[htbp]
\begin{center}
\begin{tabular}{|c|c|c|c|c|c|}
\hline
$W$ & $A_n$ & $B_n$ & $D_n (k = 3)$ & $D_n (k > 3$) & $W (k = 2)$ \\
\hline
$\mathscr{W}_{n,k}$ & $\mathscr{A}_{n,k}$ & $\mathscr{B}_{n,k,k-1}$ & $\mathscr{D}_{n,3}$ & $\mathscr{B}_{n,k,k-1}$ & $\mathscr{H}(W)$ \\
\hline
\end{tabular}
 \end{center}
\caption{$k$-parabolic arrangements for particular choices of $W$}
\label{tbl:kparabolic}
\end{table}

Since our motivation comes from the group action, in this paper $\mathscr{D}_{n,k}$ will always refer to the $k$-parabolic arrangement of type $D$, 
even when this arrangement is different from the previously defined analogue of the type $D$ $k$-equal arrangement.

Now we construct $Perm_k(W)$. The construction relies on the fact that $\mathscr{W}_{n,k}$ is embedded in the Coxeter arrangement, $\mathscr{H}(W)$ of type $W$. Let $\Delta(W)$ be 
the simplicial decomposition of $\mathbb{S}^{n-1}$ induced by $\mathscr{H}(W) \cap \mathbb{S}^{n-1}$. This complex is known as the Coxeter complex.
An example of a Coxeter complex is given in Figure \ref{permuteahedron}. 

Now we describe the face poset of the Coxeter complex. This description is found in Section 1.14 of Humphreys \cite{coxbook}. Given $I \subseteq S$, let 
$C_I = \{x \in \mathbb{R}^n: (x, \alpha) = 0 \mbox{ for all } \alpha \in I, (x, \beta) > 0 \mbox{ for all } \beta \in S \setminus I \}$.
Clearly this object is a convex cone. Given $I \subseteq S, w \in W$, let $wC_I = \{wx: x \in C_I\}$. These regions, when intersected with the $(n-1)$-sphere,
 correspond to faces in the Coxeter 
complex. Thus, the face poset of the Coxeter complex corresponds to cosets of standard parabolic subgroups, ordered by \emph{reverse} inclusion. We shall call such cosets \emph{parabolic cosets}. 

Let $\Delta_k(W) = \{F \in \Delta: \exists X \in \mathscr{W}_{n,k} \mbox{ such that } F \subseteq X \}$. Clearly $\Delta_k(W)$ is a subcomplex of $\Delta(W)$. 
Moreover, we have the following proposition. 

\begin{proposition}
 \label{thm:subtocom}
 $\mathcal{M}(\mathscr{W}_{n,k})$ is homotopy equivalent to $|\Delta(W)| \setminus |\Delta_k(W)|$.
\end{proposition}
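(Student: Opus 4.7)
The plan is to use radial projection onto the unit sphere, leveraging the fact that $\mathscr{W}_{n,k}$ is a linear subspace arrangement embedded in the reflection arrangement $\mathscr{H}(W)$.

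First I would observe that $\mathcal{M}(\mathscr{W}_{n,k})$ is invariant under multiplication by nonzero scalars, since every $X \in \mathscr{W}_{n,k}$ is a linear subspace $Fix(G)$ and hence closed under scaling, so its complement is too. Moreover, since $3 \le k \le n$ the arrangement is nonempty, and every linear subspace contains the origin, so $0 \notin \mathcal{M}(\mathscr{W}_{n,k})$. Consequently the straight-line homotopy
\begin{equation*}
H(x,t) = (1-t)\, x + t\, \tfrac{x}{\|x\|}
\end{equation*}
stays inside $\mathcal{M}(\mathscr{W}_{n,k})$ and exhibits a deformation retraction of $\mathcal{M}(\mathscr{W}_{n,k})$ onto $\mathcal{M}(\mathscr{W}_{n,k}) \cap \mathbb{S}^{n-1}$.

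Next, since $|\Delta(W)| = \mathbb{S}^{n-1}$, it remains to identify the sphere-level complement with $|\Delta(W)| \setminus |\Delta_k(W)|$; equivalently, I would show
\begin{equation*}
|\Delta_k(W)| \;=\; \Bigl(\,\bigcup_{X \in \mathscr{W}_{n,k}} X\Bigr) \cap \mathbb{S}^{n-1}.
\end{equation*}
The $(\subseteq)$ direction is immediate from the definition of $\Delta_k(W)$. For the $(\supseteq)$ direction, suppose $x \in X \cap \mathbb{S}^{n-1}$ for some $X \in \mathscr{W}_{n,k}$, and let $F$ be the unique open face of $\Delta(W)$ containing $x$. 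Because $\mathscr{W}_{n,k}$ is embedded in $\mathscr{H}(W)$, the subspace $X$ is the intersection of the hyperplanes of $\mathscr{H}(W)$ that contain it. Every such hyperplane contains $x$, and therefore — since $F$ lies on exactly those hyperplanes of $\mathscr{H}(W)$ which pass through any one of its points — contains all of $F$. Hence $F \subseteq X$, so $F \in \Delta_k(W)$ and $x \in |\Delta_k(W)|$.

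The main obstacle, such as it is, is the last step: verifying that once a face meets some $X \in \mathscr{W}_{n,k}$ it is entirely contained in $X$. This is where the embedding $\mathscr{W}_{n,k} \subseteq \mathscr{H}(W)$ is essential, and in Coxeter-theoretic terms it can be phrased via the Galois correspondence of Theorem \ref{thm:Galois}: the face $F = wC_I \cap \mathbb{S}^{n-1}$ satisfies $F \subseteq X$ if and only if $Gal(X) \subseteq wW_I w^{-1}$. Once this identification is in place, combining it with the radial deformation retraction yields the claimed homotopy equivalence.
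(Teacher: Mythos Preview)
Your argument is correct and follows the same approach as the paper: radial projection onto $\mathbb{S}^{n-1}$. The paper's proof is a one-liner invoking the map $x \mapsto x/|x|$, while you supply the details the paper omits, namely the explicit deformation retraction and the verification that $|\Delta_k(W)| = \bigl(\bigcup_{X} X\bigr) \cap \mathbb{S}^{n-1}$, using that each $X$ is an intersection of hyperplanes of $\mathscr{H}(W)$ so that a face meeting $X$ in its relative interior must lie entirely in $X$.
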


\begin{proof}
Since $\mathscr{W}_{n,k}$ is essential, we are removing subspaces containing the origin. Then the map $f$ sending $x \to \frac{x}{|x|}$ on $\mathbb{R}^n$, gives 
a homotopy equivalence between $\mathcal{M}(\mathscr{W}_{n,k})$ and $|\Delta(W)| \setminus |\Delta_k(W)|$. \end{proof}

\begin{lemma}
 Let $W$ be a finite reflection group of rank $n$, let $2 \leq k \leq n$.
Then 
$\Delta_k(W)$ corresponds to cosets $wW_I$ where there exists $J \subset I$ such that $W_J$ is a $k$-parabolic subgroup.
\end{lemma}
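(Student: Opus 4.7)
The plan is to translate the geometric containment $F \subseteq X$ into an algebraic containment of groups via the Galois correspondence of Theorem \ref{thm:Galois}, and then reduce to the standard description of parabolic subgroups of a standard parabolic subgroup.

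First, I would identify the subspace swept out by a face. From the defining equalities $(x,\alpha)=0$ for $\alpha \in I$, we have $C_I \subseteq Fix(W_I)$, and in fact $C_I$ has nonempty interior inside $Fix(W_I)$. Translating by $w$, the cone $wC_I$ has nonempty interior inside $Fix(wW_Iw^{-1})$. Consequently, for any subspace $X$ in the intersection lattice, $wC_I \subseteq X$ if and only if $Fix(wW_Iw^{-1}) \subseteq X$, and by Theorem \ref{thm:Galois} (which reverses inclusions) this is equivalent to $Gal(X) \subseteq wW_Iw^{-1}$.

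Second, I would apply this to $X \in \mathscr{W}_{n,k}$. By definition, such $X$ is exactly $Fix(G)$ for some $k$-parabolic subgroup $G$, and $Gal(Fix(G)) = G$ by the Galois correspondence. Thus, the coset $wW_I$ belongs to $\Delta_k(W)$ if and only if there exists a $k$-parabolic subgroup $G$ with $G \subseteq wW_Iw^{-1}$. Conjugating by $w^{-1}$, and using that $k$-parabolicity is invariant under $W$-conjugation, this is equivalent to the existence of a $k$-parabolic subgroup contained in $W_I$.

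Third, I would verify the equivalence with the existence of $J \subseteq I$ such that $W_J$ is a $k$-parabolic subgroup. The backward direction is immediate: $W_J \subseteq W_I$, and $W_J$ is itself $k$-parabolic. For the forward direction, I would invoke the standard fact (see, e.g., Humphreys \cite{coxbook}) that every parabolic subgroup of the Coxeter group $(W_I,I)$ is of the form $uW_Ju^{-1}$ for some $u \in W_I$ and some $J \subseteq I$. If $uW_Ju^{-1} \subseteq W_I$ is $k$-parabolic, then $W_J$ is a conjugate of it in $W$ and hence also $k$-parabolic, producing the required $J$.

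The bulk of the argument is essentially a bookkeeping exercise once the Galois correspondence is in place; the main non-routine input is the structure theorem for parabolic subgroups inside a standard parabolic subgroup, which must be cited carefully to ensure the conjugating element $u$ lies in $W_I$ (so that $J \subseteq I$ indexes a standard parabolic subgroup of the ambient $W$). Everything else is formal manipulation of the correspondence $G \leftrightarrow Fix(G)$.
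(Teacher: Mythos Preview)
Your proof is correct. The paper's argument and yours share the same underlying idea—both ultimately rest on the fact that the linear span of $C_I$ is $Fix(W_I)$—but the executions differ. The paper argues directly with simple roots: after reducing to whether $C_I \subset X$ for some $X \in \mathscr{W}_{n,k}$, it notes that if $s \in J \setminus I$ has simple root $\alpha$, then $(x,\alpha) > 0$ on $C_I$, whence $C_I \cap Fix(W_J) = \emptyset$. As written, however, this step only treats \emph{standard} $k$-parabolic subgroups $W_J$; the case $X = Fix(G)$ for a non-standard $k$-parabolic $G$ is not handled explicitly, and one still needs to know that any such $G$ fixing $C_I$ pointwise must sit inside $W_I$. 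Your route via the Galois correspondence is tighter on exactly this point: from $C_I$ being open in $Fix(W_I)$ you deduce $wC_I \subset X \Leftrightarrow Gal(X) \subset wW_Iw^{-1}$ directly, and then invoke the structure theorem that any parabolic subgroup of $W$ contained in $W_I$ is $W_I$-conjugate to some $W_J$ with $J \subseteq I$. What the paper's root computation buys is elementary concreteness; what your argument buys is a cleaner logical flow that covers all $k$-parabolic subgroups uniformly and makes the dependence on the parabolic-in-parabolic classification explicit rather than tacit.
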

In other words, the maximal simplices of the complex $\Delta_k(W)$ correspond to cosets $uW_I$, where the the subgraph of $D$ (the Dynkin diagram) induced by $I$ is connected, and has 
$k-1$ vertices. 

\begin{proof}
Given a coset $wW_I$, and $X$ in $\mathcal{W}_{n,k}$, we claim that 
$wC_I \subset X$ if and only if $C_I \subset w^{-1}X$. Let $x \in C_I$, $X = Fix(uW_Ju^{-1})$ where $u \in W, J \subset S$ and $uW_Ju^{-1} \in \mathscr{P}_{n,k}(W)$.
Then $wx \in Fix(uW_Ju^{-1})$ if and only if $x \in w^{-1}Fix(uW_Ju^{-1})$, so $wC_I \subset X$ if and only if $C_I \subset w^{-1}X$.

Fix $I \subset S$. Thus it suffices to understand when $C_I \subset X$ for some $X \in \mathscr{W}{n,k}$. Clearly if there exists $J \subset I$ such that $W_J$ is a $k$-parabolic 
subgroup, then $C_I \subset Fix(W_J) \in \mathscr{W}{n,k}$. So we see that the subcomplex $\Delta_k(W)$ contains all cosets $wW_I$ where there exists a standard 
$k$-parabolic subgroup $W_J$ with $J \subset I$. Now we must show that these are the only faces in $\Delta_k(W)$. 
Consider a 
standard $k$-parabolic subgroup $W_J$, and assume that $J$ is not a subset of $I$. Let $s \in J \setminus I$, and let $\alpha$ be the simple root 
corresponding to $s$. By definition of $C_I$, for any $x \in C_I$ we have $(x, \alpha) > 0$, which means that $x \not\in H_{\alpha}$, and thus $x \not\in Fix(W_J)$. Hence 
$C_I$ is disjoint from $Fix(W_J)$. Thus if $wW_I$ happens to be such that for all $J \subset I$, $W_J \not\in \mathscr{P}_{n,k}(W)$, then $wC_I$ is not in $\Delta_k(W)$. 
Hence we obtain the description of $\Delta_k(W)$ given above. \end{proof}

Next we consider a polytope related to the Coxeter complex, known in the literature as the Coxeter cell or Coxeter permutahedron $Perm(W)$. We construct $Perm_k(W)$ as a subcomplex of $Perm(W)$.
Consider a point $x$ in one of the regions of $\mathscr{H}(W)$, and let $W(x) = \{wx : w \in W \}$. For any set $I \subset W$, let $I(x) = \{wx: w \in I \}$. The $W$-permutahedron
is the convex hull of $W(x)$. An example of the $B_3$-permutahedron is given in Figure \ref{permuteahedron}. The $W$-permutahedron, denoted $Perm(W)$, is a polytope. 
It is a combinatorial exercise to show that the face poset of the $W$-permutahedron is dual of the 
face poset of the Coxeter complex. That is, there is a bijection $\varphi: \mathscr{F}(\Delta(W)) \to \mathscr{F}(Perm(W))$ such that $F \subset G$ if and only if $\varphi(G) \subset \varphi(F)$, 
for any $F, G$ in $\mathscr{F}(\Delta(W))$. 
So faces of the $W$-permutahedron correspond to parabolic cosets, ordered by inclusion.
Note that the one skeleton of the $W$ permutahedron is the Cayley graph of $W$ with respect to the generating set $S$.

\begin{figure}[htbp]
\begin{center}
\begin{tabular}{cc}
\includegraphics[height=5cm]{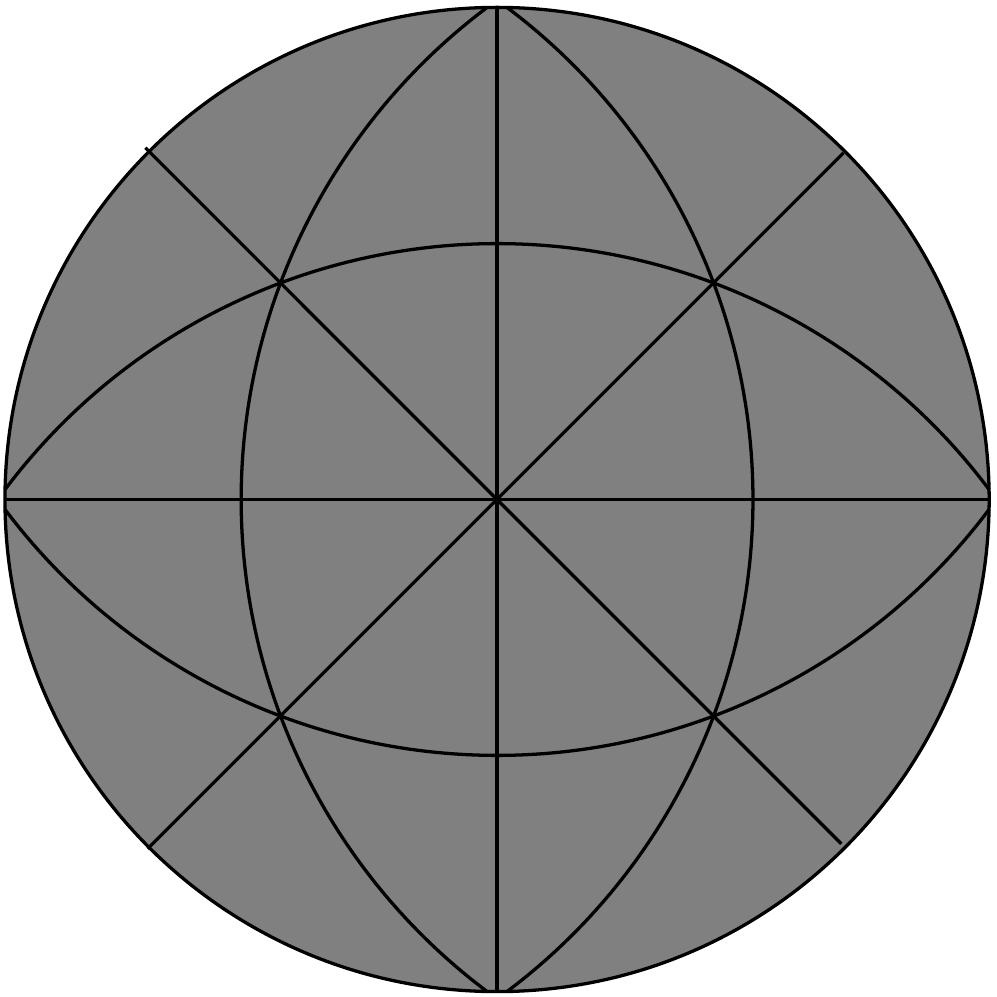} & \includegraphics[height=5cm]{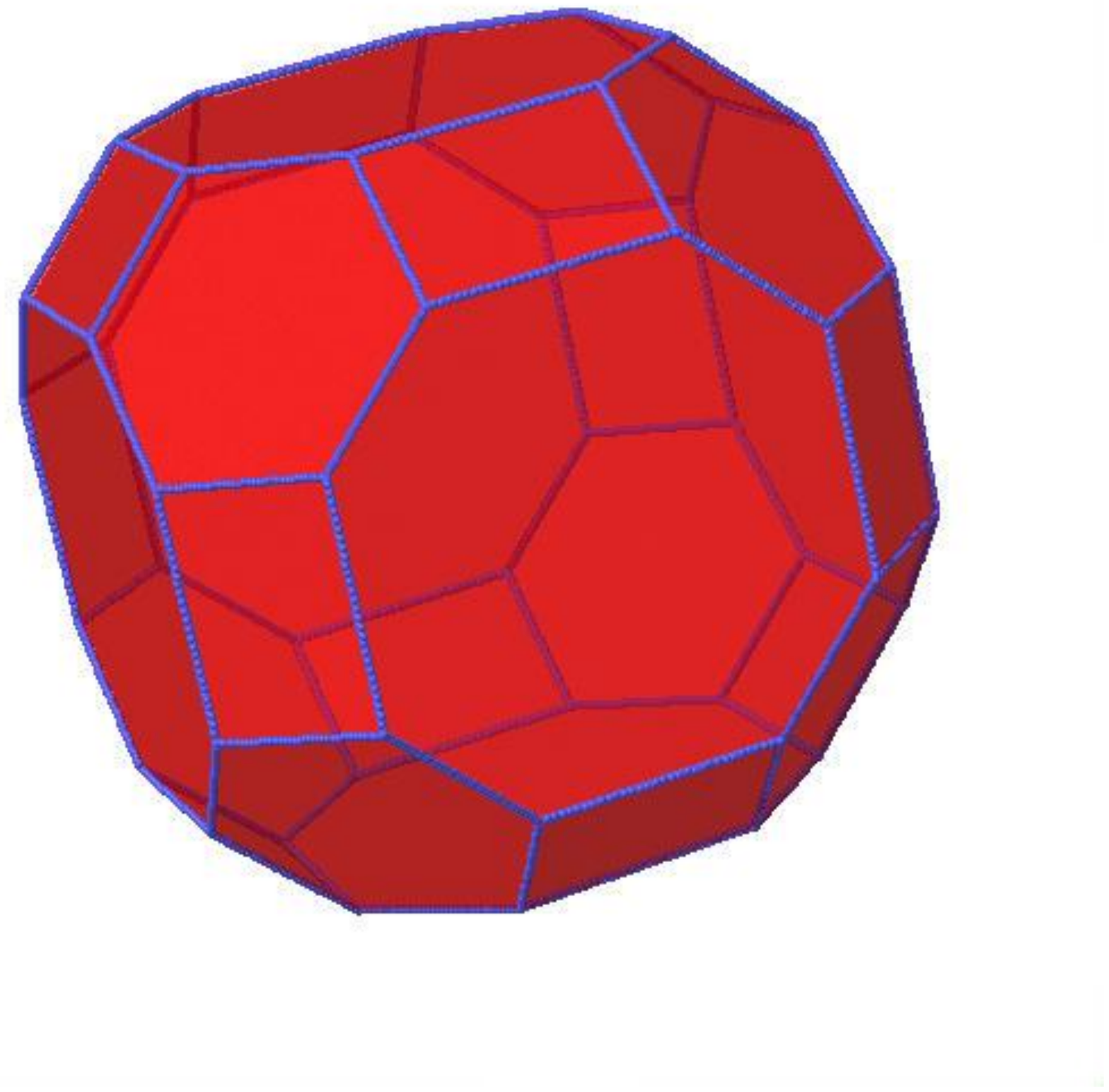}
\end{tabular}
\caption{Left - $B_3$ Coxeter Complex, Right - $B_3$ Permutahedron}
\label{permuteahedron}
\end{center}
\end{figure}

We show a much deeper correspondence between the Coxeter complex and the $Perm(W)$. Given $\mathscr{W}_{n,k}$, there is a subcomplex of 
$Perm(W)$ homotopy equivalent to the complement. This construction holds for any subspace arrangement embedded in the Coxeter arrangement.
 To prove this result, we use the following specialization of Proposition 3.1 in \cite{bjorner-ziegler}.

\begin{proposition}
\label{prop:dualcomplex}

Let $\Delta$ be a simplicial decomposition of the $k$-sphere, and let $\Delta_0$ be a subcomplex of $\Delta$. 
Let $P$ be the face poset of $\Delta$, and let $P_0$ be the lower order ideal generated by $\Delta_0$. 
Then $|\Delta| \backslash |\Delta_0|$ is homotopy equivalent to a regular CW complex $X$, 
and moreover, the face poset of $X$ is $(P\backslash P_0)^*$, where $*$ denotes taking the dual poset.

\end{proposition}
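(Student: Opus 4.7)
The plan is to realize $X$ as a dual cell complex inside the barycentric subdivision $\Delta'$ of $\Delta$ and then exhibit a strong deformation retraction $|\Delta| \setminus |\Delta_0| \to X$. For each $\sigma \in P$, let the closed dual cell $D(\sigma) \subseteq |\Delta'|$ be the subcomplex spanned by all vertex chains $\sigma \leq \sigma_0 < \sigma_1 < \cdots < \sigma_m$; equivalently, $D(\sigma)$ is the order complex of the upper interval $P_{\geq \sigma}$. Since $\sigma$ is the minimum of $P_{\geq \sigma}$, the complex $D(\sigma)$ is the cone at $\hat{\sigma}$ over the barycentric subdivision of the link $\mathrm{lk}_{\Delta}(\sigma)$. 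Because $|\Delta| \cong S^k$, this link is a $(k - \dim \sigma - 1)$-sphere, so $D(\sigma)$ is a closed $(k - \dim \sigma)$-ball. The family $\{D(\sigma)\}_{\sigma \in P}$ thus equips $|\Delta|$ with a regular CW structure satisfying $D(\tau) \subseteq D(\sigma)$ if and only if $\sigma \leq \tau$, so the induced face poset is $P^*$.

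Set $X := \bigcup_{\sigma \in P \setminus P_0} D(\sigma)$. Because $P_0$ is a lower order ideal, $\sigma \notin P_0$ forces every $\tau \geq \sigma$ to also lie outside $P_0$; hence $X$ is a closed subcomplex of the dual structure, and its face poset is exactly $(P \setminus P_0)^*$. To finish, I would construct a strong deformation retraction of $|\Delta| \setminus |\Delta_0|$ onto $X$. Each $p \in |\Delta|$ lies in the interior of a unique simplex of $\Delta'$, encoded by a chain $\sigma_0 < \cdots < \sigma_m$ together with positive weights $t_0, \ldots, t_m$ at $\hat{\sigma_0}, \ldots, \hat{\sigma_m}$; one checks that the $\Delta$-carrier of $p$ is $\sigma_m$, so $p \notin |\Delta_0|$ is equivalent to $\sigma_m \notin P_0$. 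Since $P_0$ is a lower order ideal, there is a smallest index $j$ with $\sigma_j \notin P_0$, and the homotopy at time $s$ sends $p$ to the point with weights $((1-s)t_0, \ldots, (1-s)t_{j-1}, t_j, \ldots, t_m)$, renormalized to sum to $1$. The endpoint lies in the open dual cell of $\sigma_j$ inside $X$, and points of $X$ itself (where $j = 0$) are fixed throughout.

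The main obstacle is verifying that these per-simplex formulas glue into a continuous homotopy on $|\Delta| \setminus |\Delta_0|$ whose intermediate values stay out of $|\Delta_0|$. Continuity across a boundary face of an open simplex of $\Delta'$ reduces to the case where a single $\sigma_i$ is dropped from the chain; using that $P_0$ is a lower order ideal, the index $j$ and the resulting rescaled weights agree on either side of the boundary, so the rule is well defined on the full barycentric subdivision. Avoiding $|\Delta_0|$ is automatic, since the rule only drains the weights at $\hat{\sigma_0}, \ldots, \hat{\sigma_{j-1}}$ and never lowers $t_m$, so the $\Delta$-carrier remains $\sigma_m \notin P_0$ throughout the homotopy. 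Combined with the structural description of $X$ above, this produces the required homotopy equivalence and completes the proof of Proposition \ref{prop:dualcomplex}.
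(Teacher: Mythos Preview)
Your argument is correct and is essentially the classical dual-block construction underlying the Bj\"orner--Ziegler result; the paper itself does not supply a proof but simply quotes this proposition as a specialization of Proposition~3.1 in \cite{bjorner-ziegler}. So you have filled in what the paper treats as a black box, along exactly the lines one would expect.

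One small caveat worth noting: your step ``since $|\Delta|\cong S^k$, the link of $\sigma$ is a $(k-\dim\sigma-1)$-sphere'' uses that $\Delta$ is a PL sphere, not merely a triangulation homeomorphic to $S^k$; in high dimensions there exist non-PL triangulations of spheres whose links are only homology spheres, and then $D(\sigma)$ need not be a ball. This does not affect the application here---the Coxeter complex, arising from a hyperplane arrangement cut against $S^{n-1}$, is certainly PL---but if you want the argument to match the generality of the stated proposition you should either add a PL hypothesis or remark that the Bj\"orner--Ziegler framework handles this more generally.
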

Thus we have the following corollary.

\begin{corollary}
 \label{cor:complex}
There is a subcomplex, $Perm_k(W)$ of $Perm(W)$ such that $\mathcal{M}(\mathscr{W}_{n,k}) \cong Perm_k(W)$. Moreover, the faces of 
$Perm_k(W)$ correspond to cosets $wW_I$ such that for all $J \subset I$, $W_J$ is not $k$-parabolic.
\end{corollary}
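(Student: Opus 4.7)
The plan is to chain Propositions \ref{thm:subtocom} and \ref{prop:dualcomplex} together with the duality between the face posets of $\Delta(W)$ and $Perm(W)$. First I would note that $\Delta(W)$ is a simplicial decomposition of $\mathbb{S}^{n-1}$ and $\Delta_k(W)$ is a subcomplex, so Proposition \ref{prop:dualcomplex} applies directly: $|\Delta(W)| \setminus |\Delta_k(W)|$ is homotopy equivalent to a regular CW complex $X$ whose face poset is $(P \setminus P_0)^{*}$, where $P$ is the face poset of $\Delta(W)$ and $P_0$ is the lower order ideal generated by $\Delta_k(W)$. Since $\Delta_k(W)$ is already a subcomplex, it is itself closed downward in $P$, so $P_0 = \Delta_k(W)$ as sets. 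Combined with Proposition \ref{thm:subtocom}, this already gives $\mathcal{M}(\mathscr{W}_{n,k}) \simeq X$.

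Next I would translate the face poset of $X$ into the language of $Perm(W)$. Recall that $P$ consists of the parabolic cosets $wW_I$ ordered by reverse inclusion, so $P^{*}$ is the same set of cosets ordered by inclusion, which is precisely the face poset of $Perm(W)$ under the duality $\varphi$. Applying the preceding lemma, $P \setminus P_0$ is the collection of cosets $wW_I$ such that no $J \subseteq I$ yields a $k$-parabolic subgroup $W_J$.

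The key step is to verify that, viewed inside $P^{*}$, this set is a lower order ideal, and hence corresponds to an honest subcomplex of $Perm(W)$, which I call $Perm_k(W)$. Concretely, a subface of $wW_I$ in $Perm(W)$ is a coset $uW_{I'}$ with $u \in wW_I$ and $I' \subseteq I$; if $wW_I$ satisfies the no-sub-$k$-parabolic condition, then any $J \subseteq I'$ also satisfies $J \subseteq I$, so $W_J$ is not $k$-parabolic, and $uW_{I'}$ inherits the condition. Thus $P \setminus P_0$ is downward closed in $P^{*}$ and defines a genuine subcomplex of $Perm(W)$.

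Finally, since a regular CW complex is determined up to cellular isomorphism by its face poset, the complex $X$ produced by Proposition \ref{prop:dualcomplex} is homeomorphic to $Perm_k(W)$, completing both claims of the corollary. I do not anticipate a serious obstacle here; the content is entirely in assembling the ingredients, and the only point that requires care is checking downward closure of the defining condition, which is essentially automatic from the quantifier structure.
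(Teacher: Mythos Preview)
Your proposal is correct and follows essentially the same approach as the paper: chain Proposition~\ref{thm:subtocom} with Proposition~\ref{prop:dualcomplex}, identify the resulting face poset via the preceding lemma and the duality with $Perm(W)$, and then invoke that regular CW complexes are determined by their face posets. Your version is in fact slightly more careful than the paper's, since you explicitly verify the downward-closure of the defining condition in $P^{*}$, a point the paper leaves implicit.
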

\begin{proof}
 We see that $\mathcal{M}(\mathscr{W}_{n,k}) \cong |\Delta(W)| \setminus |\Delta_k(W)|$, which by the previous theorem is equivalent to some regular CW complex $Perm_k(W)$. The face poset of $Perm_k(W)$ follows from Proposition \ref{prop:dualcomplex} and the description of the face poset of $\Delta_k(W)$. Since regular CW complexes are determined by their face poset, we note that face poset of $Perm_k(W)$ corresponds to a subcomplex of $Perm(W)$, and hence $Perm_k(W)$ may be viewed as a subcomplex of $Perm(W)$.
\end{proof}

Note that in a previous paper \cite{full-version}, we proved this result only for the $2$-skeleton of $Perm_k(W)$.

\begin{remark}
 When $k=3$, $\mathscr{F}(Perm_k(W))$ consists only of cosets $wW_I$, where the reflections in $I$ commute. It is not hard to see that the corresponding face of $Perm_3(W)$ is an $|I|$-cube. Thus, $Perm_3(W)$ is a cubical complex, that is, a polyhedral complex whose faces are all cubes. We need this fact for Section \ref{sec:optimality}.
\end{remark}

Naturally, we would like a set of representatives for our cosets.
First, we recall some combinatorics of Coxeter groups, as this will give us nice choices for representatives. 
Given an element $w \in W$, 
let $\ell(w)$ denote the minimum number of simple reflections $s_1, \ldots, s_k$ 
such that $w = s_1 \cdots s_k$. We refer to $\ell(w)$ as the \emph{length} of $w$. 
Given a simple reflection $s$, we call $s$ a \emph{descent} of $w$ if $\ell(ws) < \ell(w)$. 
The \emph{right weak order} on $W$ is defined as follows: given two elements $u, v \in W$, we say that $u \leq v$ if there exists $w \in W$, $v = uw$, and $\ell(v) = \ell(u) + \ell(w)$. 

\begin{figure}[htbp]
\begin{center}
 \includegraphics[height=7cm]{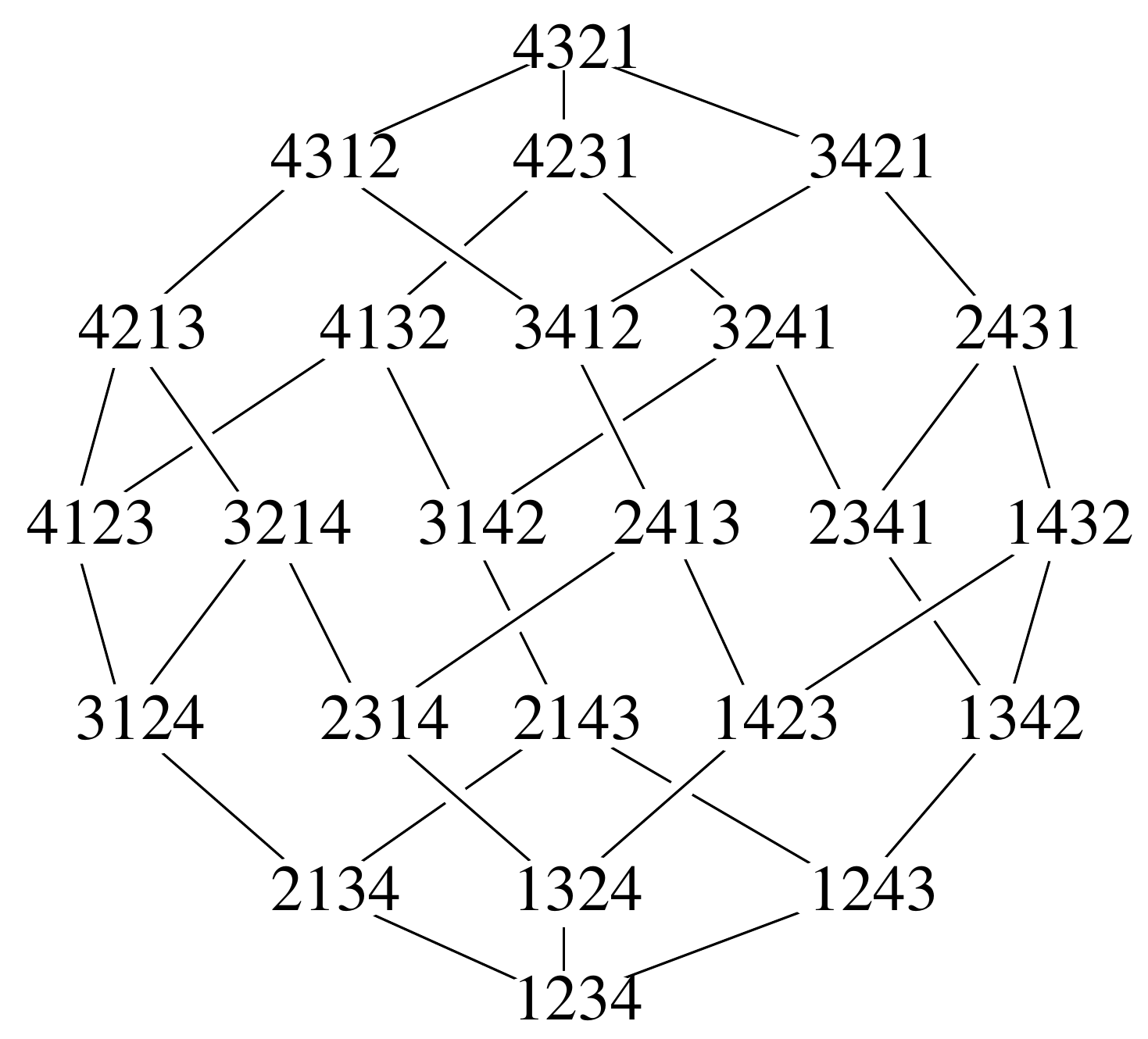}
\caption{$S_4$ under right weak order}
\label{S4rightweak}
\end{center}
\end{figure}
Figure \ref{S4rightweak} is from Aguiar and Sottile \cite{aguiar-sottile}.

When $W$ is finite, the right weak order has a maximum element, denoted $w_{0}$.
We now extend the definition of descent to finite standard parabolic cosets. Given $I \subset S, w \in W$, there is a unique element $w' \in wW_I$ of minimal length. We call this the coset representative of minimal 
length. Let $W^I$ denote the set of coset representatives of minimal length for $W_I$. Then the following results may be found in Humphreys' book \cite{coxbook}
\begin{theorem}[Proposition 1.10c in \cite{coxbook}]

 \begin{enumerate}
  \item $W^I = \{w \in W: \ell(ws) > \ell(w), \mbox{for all } s \in I \}$
 \item For all $w \in W$, there exists unique $u \in W_I, v \in W^I$ such that $w = uv$, and $\ell(w) = \ell(u) + \ell(v)$.
 \end{enumerate}

\end{theorem}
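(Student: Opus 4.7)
The plan is to establish both assertions using the standard toolkit of Coxeter groups: the fact that multiplying by a simple reflection changes length by exactly $\pm 1$, the strong exchange condition, and the reflection representation which encodes lengths via positive roots.

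For part (1), I would prove each inclusion separately. The forward inclusion is short: if $w \in W^I$ is the minimum-length element of $wW_I$, then for any $s \in I$ the element $ws$ lies in $wW_I$, so $\ell(ws) \geq \ell(w)$; equality is excluded by parity, hence $\ell(ws) > \ell(w)$. The reverse inclusion carries the real content. Given $w$ satisfying $\ell(ws) > \ell(w)$ for every $s \in I$, I would show by induction on $\ell(x)$ that $\ell(wx) = \ell(w) + \ell(x)$ for every $x \in W_I$, which identifies $w$ as the shortest element of $wW_I$. In the inductive step, write $x = x' s'$ with $s' \in I$ and $\ell(x') = \ell(x) - 1$; by the inductive hypothesis, $\ell(wx') = \ell(w) + \ell(x')$, so it suffices to rule out $s'$ being a right descent of $wx'$. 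If it were, the strong exchange condition applied to the reduced expression for $wx'$ formed by concatenating reduced expressions for $w$ and $x'$ would allow us to delete one letter. A deletion in the $x'$-tail would produce a shorter expression for $x = x' s'$, contradicting $\ell(x') = \ell(x) - 1$; a deletion in the $w$-part would yield a reflection $t = x' s' x'^{-1} \in W_I$ with $\ell(wt) < \ell(w)$, which in the reflection representation means $w$ sends the positive root $x'(\alpha_{s'}) \in \Phi_I^+$ to a negative root. But since $w(\alpha_s)$ is a positive root for every simple $\alpha_s \in \Pi_I$, by linearity $w$ must send every positive root of $\Phi_I$ to a positive root, contradicting the existence of such a $t$.

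For part (2), existence is obtained by letting $v$ be the unique minimum-length element of the coset containing $w$ and setting $u = w v^{-1}$. Then $u \in W_I$, $v \in W^I$, and the length identity $\ell(w) = \ell(u) + \ell(v)$ follows from part (1) applied to $v$ together with the inductive length formula proved above. Uniqueness reduces to the uniqueness of the minimum-length coset representative: two factorizations $w = u_1 v_1 = u_2 v_2$ with $u_i \in W_I$ and $v_i \in W^I$ place $v_1, v_2$ in the same coset as minimum-length representatives, hence $v_1 = v_2$ and consequently $u_1 = u_2$.

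The main obstacle is the bookkeeping in the reverse inclusion of part (1), specifically the case analysis after applying the strong exchange condition. Ruling out the $x'$-tail deletion is formal, but extracting a contradiction from a $w$-part deletion requires translating the combinatorial descent hypothesis into the geometry of positive roots, using the key observation that the positive root cone $\Phi_I^+$ is preserved by $w$ whenever $w$ sends each simple root of $\Pi_I$ to a positive root. This interplay between reduced expressions and the reflection representation is the delicate point of the argument.
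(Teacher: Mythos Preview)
The paper does not supply its own proof here; the result is simply quoted from Humphreys' book as Proposition~1.10(c). Your argument for part~(1) is the standard one and is correct; the detour through the exchange condition is unnecessary, since once you know $w(\alpha_s)>0$ for every $s\in I$ the positive-root argument you give at the end already rules out any right descent of $wx'$ lying in $I$.

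For part~(2) there is a genuine issue, though it originates in the paper's transcription rather than in your idea. The statement as printed has the two factors swapped relative to Humphreys: the correct assertion is $w=uv$ with $u\in W^I$ and $v\in W_I$, i.e.\ the minimal coset representative on the \emph{left}. As literally written (with $u\in W_I$, $v\in W^I$) uniqueness already fails in $S_3$ with $I=\{s_1\}$: the element $s_1s_2$ factors both as $e\cdot(s_1s_2)$ and as $s_1\cdot s_2$, each with additive lengths.

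Your proof inherits this left/right confusion. If $v$ is the minimal element of the left coset $wW_I$, then $v^{-1}w\in W_I$, giving the factorization $w=v\cdot(v^{-1}w)$ with the $W^I$-factor first; the element $u=wv^{-1}$ you define need not lie in $W_I$. Similarly, in your uniqueness argument the equation $u_1v_1=u_2v_2$ with $u_i\in W_I$ places $v_1,v_2$ in the same \emph{right} coset $W_Iw$, not the same left coset, so you cannot invoke uniqueness of the minimal left-coset representative. Once the order of the factors is corrected to match Humphreys, your argument goes through without change.
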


If a coset $wW_I$ is finite, then it also has a coset representative of maximal length, given by multiplying the minimal length representative on the right by the maximum element of $W_I$. 
We say that an element $s \in S \setminus I$ is a \emph{descent} for a coset $wW_I$ if and only if it is a \emph{descent} for the maximal length representative of $wW_I$. 
In this paper, we use coset representatives of maximal length. Also, the right weak order is used when applying the Cluster Lemma and techniques from discrete Morse theory.

\section{Discrete Morse Theory and Shellability}
\label{sec:discmorse}

Here we review the terminology used with discrete Morse theory, as well as state the major theorems we use. 
Throughout, let $P, Q$ be finite posets. There are several wonderful introductions to discrete Morse theory: we highly recommend 
the book by Jonsson \cite{jonsson}, which has several examples of the application of discrete Morse theory. Our terminology comes from \emph{Combinatorial Algebraic Topology} by Kozlov \cite{kozlov}.
 However, the results of this section are due to Forman \cite{formancell}, who used different (but equivalent) terminology.
For the reader familiar with discrete Morse theory, we note that we actually need the full power of the fundamental theorem of discrete Morse theory. That is, our complexes are not simplicial, so we need to the regular CW complex version of discrete Morse theory. Moreover, we have to compute the boundary operator of the resulting Morse complex when $k=3$. Finally, our Morse matchings will be constructed out of matchings arising from shellable simplicial complexes.
We begin by presenting the definition of an acyclic matching that appears in the literature.

\begin{definition}[\cite{kozlov}, Definition 11.1]
Let $P$ be a poset. 

 A matching in $P$ is a partial matching in the underlying graph of the Hasse diagram of $P$, i.e., it is a subset $M \subseteq P \times P$ such that 
\begin{itemize}
 \item $(a,b)\in M$ implies $b \succ a$ ($b$ covers $a$);
\item each $a \in P$ belongs to at most one element in $M$. 
\end{itemize}

When $(a,b) \in M$ we  write $a =d(b)$ and $b=u(a)$. 
A partial matching on $P$ is called acyclic if 

there does not exist a cycle \[b_1 \succ d(b_1) \prec b_2 \succ d(b_2) \prec \cdots \prec b_n \succ d(b_n) \prec b_1 \] with $n>2$ and all $b_i \in P$ being distinct.

\end{definition}

\begin{theorem}[\cite{kozlov}, Theorem 11.13]
\label{thm:fundthm}
Let $\Delta$ be a polyhedral complex, and let $M$ be an acyclic matching on $\mathcal{F}(\Delta)\setminus \{\hat{0}\}$. Let $c_i$ denote the number of critical $i$-dimensional cells of $\Delta$. 
\begin{itemize}
 \item[(a)] If the critical cells form a subcomplex $\Delta_c$ of $\Delta$, then there exists a sequence of cellular collapses leading from $\Delta$ to $\Delta_c$. 
\item[(b)] In general, the space $\Delta$ is homotopy equivalent to $\Delta_c$, where $\Delta_c$ is a CW complex with a bijection between the set of $i$-cells of $\Delta_c$ and $C_i$.
\item[(c)] Moreover, under this bijection $f$, for any two cells $\sigma$ and $\tau$ of $\Delta_c$ satisfying dim $\sigma =$ dim $\tau + 1$, the incidence number 
$[\tau : \sigma]$ is given by \[ [\tau : \sigma] = \sum_c \omega(c). \]
Here the sum is taken over all alternating paths $c$ connecting $f(\sigma)$ with $f(\tau)$, i.e., over all sequences $c = (f(\sigma), a_1 , u(a_1 ), \ldots , a_t , u(a_t), f(\tau) )$ such that 
$f(\sigma) \succ a_1$, $u(a_t ) \succ f(\tau)$, and $u(a_i )\succ  a_{i+1}$ , for $i = 1, . . . , a_{t-1}$. For such an alternating path, the quantity $\omega(c)$ is defined by
    \[ \omega(c) := (-1)^t [a_1 : f(\sigma)][f(\tau) : u(a_t )]\prod_{i=1}^{t}[a_i : u(a_i )] \prod_{i=1}^{t-1}[a_{i+1} : u(a_i)] \] 
where the incidence numbers in the right-hand side are taken in the complex $\Delta$.
\end{itemize}
\end{theorem}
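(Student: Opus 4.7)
The plan is to exploit the acyclic matching $M$ to produce a refined ordering on $\mathcal{F}(\Delta)$ in which matched pairs appear as ``reversed'' covering relations, and then to carry out elementary cellular collapses along this order. Specifically, I would first build the directed graph $G_M$ on the Hasse diagram of $\mathcal{F}(\Delta)$ by orienting each covering relation $a \prec b$ downward, except that every matched pair $(a,b) \in M$ is oriented upward from $a$ to $b$. The acyclicity hypothesis on $M$ is precisely the statement that $G_M$ is a DAG, so it admits a linear extension, and this linear extension will dictate the order of collapses.

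For part (a), assuming the critical cells form a subcomplex $\Delta_c$, I would process matched pairs in reverse topological order of $G_M$. The main claim to verify is that when a pair $(a,b)$ is reached, $a$ is a free face of $b$ in the not-yet-collapsed subcomplex: every other coface $c \succ a$ with $c \neq b$ has already been paired and removed at an earlier stage, because $G_M$ orders such $c$ strictly above $b$. Thus each step is a legitimate elementary collapse, and after processing all of $M$, exactly the critical cells remain, yielding a collapse sequence $\Delta \searrow \Delta_c$.

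For part (b), without the subcomplex hypothesis one cannot perform all these collapses inside $\Delta$ itself, so I would instead build $\Delta_c$ abstractly as a CW complex with one $i$-cell $f(\sigma)$ for each critical $i$-cell $\sigma$ and construct a homotopy equivalence $\Delta \simeq \Delta_c$ by induction on $|M|$. At the inductive step one picks a pair $(a,b)$ minimal with respect to $G_M$; either it admits an honest elementary collapse in $\Delta$, or one performs a local Whitehead-style deformation in a regular neighborhood of $\overline{b}$ to merge $\overline{a}\cup\overline{b}$ into a cell of one lower dimension, and attaches the result to the rest of the complex via the existing incidence data. Acyclicity of $M$ is what guarantees that these local moves can be composed coherently into a global homotopy equivalence.

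For part (c), I would identify the boundary maps on $C_*(\Delta_c)$ by chasing the induced chain-level map through the collapses. For critical cells $\sigma,\tau$ with $\dim \sigma = \dim \tau + 1$, a face of $\sigma$ in $\partial \sigma$ is either already critical (contributing directly to $[\tau:\sigma]$) or gets substituted using the matching relation $a \leftrightarrow u(a)$, each such substitution producing a sign $-1$ and a product of two incidence numbers $[a_i : u(a_i)]$ and $[a_{i+1} : u(a_i)]$. Iterating these substitutions until reaching $\tau$ yields precisely the alternating paths $c$, with weight $\omega(c)$ as stated. The main obstacle is part (b): without the subcomplex hypothesis there is no purely combinatorial collapse to perform, and the bookkeeping of attaching maps --- together with checking that the alternating-path boundary indeed satisfies $\partial^2 = 0$ --- requires either an explicit Morse-flow construction \emph{\`a la} Forman or an inductive argument that combines matched pairs into locally collapsible clusters.
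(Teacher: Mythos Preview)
The paper does not prove this theorem. It is quoted verbatim as a background result (Theorem~11.13 in Kozlov's \emph{Combinatorial Algebraic Topology}, with the underlying ideas attributed to Forman), and is used as a black box throughout Sections~\ref{sec:discmorse}--\ref{sec:optimality}. So there is no ``paper's own proof'' to compare against: your proposal stands as a sketch of the standard Forman--Kozlov argument rather than as an alternative to anything in this paper.

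That said, your sketch is a reasonable outline of how the literature proves it: the DAG $G_M$ and linear-extension argument for part~(a) is exactly Forman's free-face argument, and the alternating-path bookkeeping for part~(c) is the standard derivation of the Morse boundary operator (this is essentially the ``algebraic Morse theory'' computation). The one place where you correctly flag a genuine difficulty is part~(b): when the critical cells do not form a subcomplex, one really does need either Forman's gradient-flow construction on the CW complex or the inductive internal-collapse argument in Kozlov's book, and your description of ``Whitehead-style deformation in a regular neighborhood'' is vague about exactly which topological lemma does the work. If you intend to actually write this proof out, that is the step requiring the most care; the rest is largely formal.
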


Given an acyclic matching $M$, we say that a matching is \emph{optimal} if $\Delta_c$ is a minimal cell complex. 
Constructing an acyclic matching is often a very challenging problem, so we need to use the following result, 
known as the Cluster Lemma or Patchwork Theorem, which 
allows us to create an acyclic matching on a poset $P$ by piecing together acyclic matchings on the fibers of a poset map from $P$ to another poset $Q$. 

\begin{lemma}[\cite{kozlov}, Theorem 11.10]
\label{lem:clusterlem}
Assume that $\varphi:P \rightarrow Q$ is an order-preserving map, and assume that we have acyclic matchings on subposets $\varphi^{-1}(q)$ for all $q \in Q$. Then the union of these matchings is itself an acyclic matching on $P$.  
\end{lemma}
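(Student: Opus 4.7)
The plan is to show two things: first, that the union of the fiber matchings is a matching in $P$, and second, that acyclicity of each fiber matching forces global acyclicity. Both rely on a simple but crucial observation about how the order-preserving map $\varphi$ interacts with cover relations.

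First I would verify that the union is a matching in the Hasse diagram of $P$. Since the fibers $\varphi^{-1}(q)$ partition $P$, every element of $P$ lies in a unique fiber and hence belongs to at most one matched pair. I then need to confirm that each matched pair $(a,b)$ with $\varphi(a)=\varphi(b)=q$ is actually a cover in $P$, not merely in the subposet $\varphi^{-1}(q)$. This is the key reason we need $\varphi$ to be order-preserving: if $a < c < b$ in $P$, then $q = \varphi(a) \leq \varphi(c) \leq \varphi(b) = q$, forcing $\varphi(c) = q$, so $c$ already lives in the fiber. Thus covers inside a fiber are the same as covers in $P$ between elements of that fiber.

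Next, I would argue acyclicity by contradiction. Suppose there is a cycle
\[ b_1 \succ d(b_1) \prec b_2 \succ d(b_2) \prec \cdots \prec b_n \succ d(b_n) \prec b_1 \]
in $P$ using the combined matching. Each matched pair $(d(b_i), b_i)$ belongs to a single fiber $\varphi^{-1}(q_i)$. The unmatched cover relations $d(b_i) \prec b_{i+1}$ (indices mod $n$) combined with $\varphi$ order-preserving yield $q_i = \varphi(d(b_i)) \leq \varphi(b_{i+1}) = q_{i+1}$. Traversing the cycle gives $q_1 \leq q_2 \leq \cdots \leq q_n \leq q_1$, so all $q_i$ coincide with a common value $q$.

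Therefore the entire cycle lies in the single fiber $\varphi^{-1}(q)$, and by the observation in the first step, the cover relations used in the cycle are also cover relations in the subposet $\varphi^{-1}(q)$. This produces a cycle in the matching on $\varphi^{-1}(q)$, contradicting acyclicity of that matching. I expect the main (and essentially only) obstacle to be recognizing that the order-preserving hypothesis serves two distinct roles—ensuring fibers are ``convex'' enough that cover relations are preserved, and forcing the chain of fiber-labels around a putative cycle to be constant—after which the rest is bookkeeping.
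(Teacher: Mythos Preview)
Your argument is correct and is the standard proof of the Patchwork Theorem. Note, however, that the paper does not supply its own proof of this lemma: it is quoted from Kozlov's \emph{Combinatorial Algebraic Topology} (Theorem 11.10) and used as a black box, so there is no in-paper proof to compare against. Your write-up matches the usual textbook argument.
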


Using the Patchwork Theorem, we show how to associate an optimal matching to a shellable simplicial complex $\Delta$. This is 
already a known result, and is mentioned in Kozlov \cite{kozlov}. However, we make this result explicit, as our optimal matching on $Perm_k(W)$ is constructed by using shellability and the Patchwork Theorem.

Let $\Delta$ be an abstract simplicial complex. Given a face $\sigma$, let $\bar{\sigma} = \{ \tau: \tau \subseteq \sigma \}$.
 Recall that $\Delta$ is \emph{shellable} 
if its maximal simplices can be arranged in a linear order $F_1, \ldots, F_r$ so that, for all $1\leq i \leq r$, $\Delta_i \cap \bar{F}_i$ is pure and has dimension $\dim(F_i) - 1$, where $\Delta_i = \cup_{j < i} \bar{F}_j$. Such an order is called a \emph{shelling order}.
The definition of shellability for pure simplicial complexes is due to Bruggesser and Mani \cite{bruggesser-mani}, and was extended to 
nonpure simplicial complexes by Bj\"orner and Wachs \cite{bjorner-wachs}. 
 
An equivalent definition is the following: For every $1 \leq i < j \leq r$, there exists $1 \leq k < j$ such that $F_i \cap F_j \subseteq F_k \cap F_j$, and $|F_k \cap F_j| = |F_j| - 1$.
Given a maximal simplex $F_i$, we say it is spanning if $\Delta_i \cap \bar{F}_i = \bar{F}_i \setminus \{F_i\}$, that is, if $F_i$ is being attached by its entire boundary. 
One of the nice results regarding shellable complexes is that their homology groups, and homotopy type are both easy to describe. 
However, for this paper, we only use the fact that shelling orders give rise to optimal matchings.

To define an example of such a matching, we need to recall the definition of the restriction map.
Given a facet $F_i$, let $\mathscr{R}(F_i) = \{x \in F_i: F_i \setminus \{x\} \in \Delta_i \}$. 
We call $\mathscr{R}$ the \emph{restriction map}. The next lemma is essentially due to Bj\"orner and Wachs \cite{bjorner-wachs}: our 
novelty is using the terminology of order-preserving maps to state their result.

\begin{lemma}
\label{lem:patchforgraphs}
 Let $f: \mathscr{F}(\Delta) \to [r]$ be given by $f(\sigma) = \min \{i: \sigma \subseteq F_i \}$. Then $f$ is an order preserving map. Moreover, given $i \in [r]$, $f^{-1}(i) = [\mathscr{R}(F_i), F_i]$, 
where $\mathscr{R}$ is the restriction map.
\end{lemma}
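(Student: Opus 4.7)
The plan is to verify the two assertions in order, relying only on the equivalent reformulations of shellability collected in the paragraph preceding the lemma.

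First I would handle order-preservation, which is essentially a tautology. If $\sigma \subseteq \tau$ in $\mathscr{F}(\Delta)$, then every facet containing $\tau$ also contains $\sigma$, so the set $\{i : \tau \subseteq F_i\}$ is contained in $\{i : \sigma \subseteq F_i\}$, whence $f(\sigma) = \min \{i : \sigma \subseteq F_i\} \leq \min \{i : \tau \subseteq F_i\} = f(\tau)$.

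For the fiber description, I would prove the two inclusions separately. For $f^{-1}(i) \subseteq [\mathscr{R}(F_i), F_i]$, suppose $f(\sigma) = i$ and (towards a contradiction) pick $x \in \mathscr{R}(F_i) \setminus \sigma$. Then $\sigma \subseteq F_i \setminus \{x\}$, and by the definition of the restriction map there is some $j < i$ with $F_i \setminus \{x\} \subseteq F_j$; hence $\sigma \subseteq F_j$, contradicting $f(\sigma) = i$. Conversely, suppose $\mathscr{R}(F_i) \subseteq \sigma \subseteq F_i$; then certainly $f(\sigma) \leq i$, so I must rule out $f(\sigma) = j$ for some $j < i$. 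Here I invoke the equivalent characterization of shellability recalled in the paper: there exists $k < i$ with $F_j \cap F_i \subseteq F_k \cap F_i$ and $|F_k \cap F_i| = |F_i| - 1$, say $F_k \cap F_i = F_i \setminus \{x\}$. Then $\sigma \subseteq F_j \cap F_i \subseteq F_i \setminus \{x\}$, so $x \notin \sigma$; but $F_i \setminus \{x\} \subseteq F_k \in \Delta_{i-1}$ shows $x \in \mathscr{R}(F_i)$, contradicting $\mathscr{R}(F_i) \subseteq \sigma$. Combining the two inclusions with the elementary fact that distinct fibers of a function are disjoint yields the claimed equality $f^{-1}(i) = [\mathscr{R}(F_i), F_i]$.

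The only subtlety (and the step I would double-check most carefully) is the second inclusion, since it is precisely where the hypothesis of shellability enters: one needs the existence of a facet $F_k$ that intersects $F_i$ in a codimension-one face and still contains $F_j \cap F_i$. This is exactly the content of the second formulation of shellability that appears in the paper, so no additional machinery is required beyond what has already been set up.
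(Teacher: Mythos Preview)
Your proof is correct. The paper does not actually supply a proof of this lemma; it is stated without proof and attributed as ``essentially due to Bj\"orner and Wachs,'' so there is no argument in the paper to compare yours against. The argument you give is the standard one and uses precisely the equivalent formulation of shellability that the paper has already recorded, so nothing further is needed.
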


\begin{theorem}
\label{thm:matchfromshell}
 Let $\Delta$ be a shellable complex with shelling order $F_1, \ldots, F_r$, and restriction map $\mathscr{R}$. Then
\begin{enumerate}
 \item There are optimal acyclic matchings on $\mathscr{F}(\Delta)$.
\item In such a matching $M$, there is one critical 0-cell.
\item In such a matching $M$, $C_k$, the set of critical $k$-cells $(k > 0)$ correspond to the set of facets $F_i$ such that $\mathscr{R}(F_i) = F_i$, and $\dim F_i = k$.
\item Given such a matching $M$, $\Delta \cong \bigvee_{\sigma \in C_i} \mathbb{S}^{\dim \sigma}$

\end{enumerate}

\end{theorem}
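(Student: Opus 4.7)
The plan is to build the matching using the Patchwork Theorem (Lemma \ref{lem:clusterlem}) along the order-preserving map $f$ from Lemma \ref{lem:patchforgraphs}, whose fibers are the Boolean intervals $[\mathscr{R}(F_i),F_i]$. On each such interval I will use the standard ``toggle'' matching: if $\mathscr{R}(F_i)=F_i$ the interval is a singleton and $F_i$ itself is declared critical of dimension $\dim F_i$; otherwise pick any $x_i \in F_i\setminus \mathscr{R}(F_i)$ and pair each $\sigma\in[\mathscr{R}(F_i),F_i]$ with $\sigma\triangle\{x_i\}$. Both partners lie in the interval because $x_i\notin\mathscr{R}(F_i)$ and $x_i\in F_i$, and acyclicity within a single fiber is immediate: every matched edge points up from a face avoiding $x_i$ to the face obtained by adjoining $x_i$, so no alternating cycle can form using only the covering relations of the fiber.

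The first fiber supplies the one critical $0$-cell asserted in (2). If $F_1$ is itself a vertex then $\mathscr{R}(F_1)=F_1$ and $F_1$ plays that role. Otherwise $\mathscr{R}(F_1)=\emptyset$, and the toggle matching on $[\emptyset,F_1]$ with some $x_1\in F_1$ would pair $\emptyset$ with $\{x_1\}$; but $\emptyset=\hat{0}$ is excluded from the matching, so $\{x_1\}$ is left unmatched and is the unique critical $0$-cell. For $i>1$, the shelling condition forces $\mathscr{R}(F_i)\ne\emptyset$, so the non-singleton later fibers match perfectly and contribute no critical cells, while the singleton later fibers correspond precisely to the spanning facets and contribute one critical cell of dimension $\dim F_i\ge 1$ each. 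Lemma \ref{lem:clusterlem} patches these into an acyclic matching on $\mathscr{F}(\Delta)\setminus\{\hat{0}\}$, yielding (2) and (3).

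Part (4) then follows from the classical wedge-of-spheres description of a shellable complex: inductively, each non-spanning facet is glued to $\Delta_i$ along a collapsible cone (no change in homotopy type), while each spanning facet $F_i$ is glued along its entire boundary sphere $\partial\bar{F}_i \simeq \mathbb{S}^{\dim F_i - 1}$ and therefore wedges in a copy of $\mathbb{S}^{\dim F_i}$. This establishes the wedge description and simultaneously certifies optimality for (1), since the number of critical $k$-cells produced above matches the $k$-th Betti number of this wedge. The main technical obstacle is the careful bookkeeping around the excluded empty face in the first fiber, as this is what forces exactly one critical $0$-cell rather than none; everything else is a clean application of the Patchwork Theorem to Boolean intervals.
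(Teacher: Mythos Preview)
Your proposal is correct and follows essentially the same approach as the paper: apply the Patchwork Theorem to the order-preserving map of Lemma~\ref{lem:patchforgraphs}, and on each Boolean fiber $[\mathscr{R}(F_i),F_i]$ use the toggle involution with respect to a chosen element of $F_i\setminus\mathscr{R}(F_i)$. Your treatment of the first fiber and the resulting unique critical $0$-cell is in fact more explicit than the paper's, and for part~(4) you invoke the classical inductive wedge-of-spheres description of a shelling rather than the paper's observation that the Morse complex has a single $0$-cell with only top-dimensional cells attached; both routes are standard and yield the same conclusion.
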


\begin{proof}
 By the Patchwork Theorem, we know we need to find an acyclic matching on the fibers of $f$, the map defined in Lemma \ref{lem:patchforgraphs}. 
However, the fibers are Boolean intervals. Let $i \in [r]$ such that $F_i$ is not spanning, and fix $x \in F_i \setminus \mathscr{R}(F_i)$. 
Then consider the map $g_{x,i}: [\mathscr{R}(F_i), F_i] \to [\mathscr{R}(F_i), F_i]$ given by 
\begin{displaymath}
 g_{x,i}(\sigma) = \left\{
\begin{array}{cc}
 \sigma \setminus \{x\} & \mbox{ if } x \in \sigma \\ \sigma \cup \{x\} & \mbox{ else }
\end{array}
\right.
\end{displaymath}

The map $g_{x,i}$ is clearly an involution, and gives an acyclic matching 
on $[\mathscr{R}(F_i), F_i]$. The union of these matchings is acyclic, and clearly has the properties stated in the theorem.

Note that the critical cells all correspond to facets. Thus, the resulting Morse complex is actually a wedge of spheres. \end{proof}

In our case, we always have a linear order on the vertex set $V(\Delta)$, so we can specify the map $g_{x,i}$ in the proof of this theorem by $x = \min F_i \setminus \mathscr{R}(F_i)$. 

\section{Generalization of Independence Complex of a Graph}
\label{sec:indcom}
In this section, we define a simplicial complex which generalizes the Independence complex of a graph $G$. 
We show that this complex, $Ind_k(G)$, is shellable when $G$ is a forest. This shelling order is used to construct an optimal matching for $Perm_k(W)$ in the next section. 
The complex $Ind_k(G)$ has vertex set $V(G)$, and simplices $\sigma$ correspond to vertex sets such that every component of $G[\sigma]$ has size at most $k$. Recall that $G[\sigma]$ is the \emph{induced} subgraph. That is, $V(G[\sigma]) = \sigma$, and $ij \in E(G[\sigma])$ if and only if $i, j \in \sigma$ and $ij \in E(G)$.
The case $k=1$ is the usual Independence complex studied in the literature. For more about the topology of $Ind_1(G)$, we invite the reader to consult Engstr\"om's paper \cite{engstrom}.

Given a tree $T$ with a root $r$, a \emph{tree-compatible} ordering is a linear order on $V(T)$ such that, given two vertices $u$ and $v$, 
if $v$ is contained on the unique path from $u$ to $r$, then $u \leq v$. Equivalently, a tree-compatible ordering is a linear extension of the partial order that is dual to what is known as the Tree order. Given a forest $F$, with a set of vertices $r_1, \ldots, r_k$, we 
define a tree-compatible ordering to be a linear order that is a tree-compatible order when restricted to each component. Given a tree-compatible 
ordering on $F$, it turns out that the lexicographic ordering of facets of $Ind_k(F)$ is a shelling order.

We claim the following:
\begin{theorem}
 \label{thm:shellability}
Let $F$ be a forest on $n$ vertices, and let $1 \leq k \leq n$. Then $Ind_k(F)$ is shellable. Consider a set of roots for $F$, and a tree-compatible ordering on $F$. 
Then a shelling order is given by lexicographic ordering on facets: $\sigma < \tau$ if $\min (\sigma \setminus \tau) \cup (\tau \setminus \sigma) \in \sigma$.
\end{theorem}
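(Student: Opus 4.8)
The plan is to verify the restriction form of shellability recalled in the excerpt: with $F_1,\dots,F_r$ the lexicographic list of facets and $\Delta_i=\bigcup_{j<i}\bar F_j$, it suffices to produce, for each $i<j$, an index $\ell<j$ with $F_i\cap F_j\subseteq F_\ell$ and $|F_\ell\cap F_j|=|F_j|-1$. The first ingredient is the restriction map. For a facet $\tau$ and $x\in\tau$ I claim $\tau\setminus\{x\}$ lies in a lexicographically earlier facet if and only if there is a vertex $y\notin\tau$ with $y<x$ and $(\tau\setminus\{x\})\cup\{y\}\in Ind_k(F)$: given such $y$, extend $(\tau\setminus\{x\})\cup\{y\}$ to a facet $\eta$; since $\tau$ is a facet one checks $x\notin\eta$, hence $\eta\cap\tau=\tau\setminus\{x\}$, and since $y\in\eta\setminus\tau$ with $y<x=\min(\tau\setminus\eta)$ we get $\eta<_{\mathrm{lex}}\tau$; the converse is the same computation reversed. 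Granting this, the shelling condition becomes: for any facets $\sigma<_{\mathrm{lex}}\tau$ there are $x\in\tau\setminus\sigma$ and $y\notin\tau$ with $y<x$ and $(\tau\setminus\{x\})\cup\{y\}\in Ind_k(F)$.

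Now set $z:=\min\bigl((\sigma\setminus\tau)\cup(\tau\setminus\sigma)\bigr)$. Since $\sigma<_{\mathrm{lex}}\tau$ we have $z\in\sigma\setminus\tau$, and every vertex of $\tau\setminus\sigma$ exceeds $z$; so $z$ is always an admissible witness, and it remains to find $x\in\tau\setminus\sigma$ with $(\tau\setminus\{x\})\cup\{z\}\in Ind_k(F)$. The forest enters here. As $\tau$ is a facet and $z\notin\tau$, the component $A$ of $z$ in $F[\tau\cup\{z\}]$ has more than $k$ vertices, and because $F$ is a forest $A$ is a subtree $\{z\}\cup C_1\cup\dots\cup C_r$, where $C_1,\dots,C_r$ are the components of $F[\tau]$ meeting $z$ and $z$ has exactly one neighbour in each. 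Deleting $x\in C_i$ from $\tau$ and putting $z$ back shrinks the $z$-component precisely by the subtree of $C_i$ on the far side of $x$ from $z$; so I want a single $x\in\bigl(\bigcup_iC_i\bigr)\setminus\sigma$ whose far subtree has at least $|A|-k$ vertices. Because $\sigma$ is a face containing $z$ together with every vertex of $\bigcup_iC_i$ it does not discard, the subtrees hanging below the topmost discarded vertices of $\bigcup_iC_i$ together detach at least $|A|-k$ vertices, and, using that $\sigma$ is a facet, exactly $|A|-k$.

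The remaining step, and the one I expect to be the crux, is to pass from this total count to a single subtree of size $\ge|A|-k$---equivalently, to rule out or separately handle the case of two or more topmost discarded vertices with smaller subtrees. This is where the tree-compatible ordering is essential: it guarantees that every vertex lying below $x$ in the tree is $<x$, so in the borderline case one has the freedom either to choose $x$ at the top of the largest detachable subtree or to replace the witness $z$ by a descendant of $x$ not in $\tau$, whose re-insertion creates only a small component. Carrying this out cleanly is the case-heavy part of the argument. An alternative bookkeeping is induction on $|V(F)|$ ordered by the least vertex $v$ (a leaf): the lexicographic order then lists all facets containing $v$ first---these correspond to facets of the link of $v$---followed by the facets avoiding $v$, which correspond to facets of $Ind_k(F-v)$; the deletion part is immediate from the inductive hypothesis and the $v$-facets reduce to shelling the link, but the link of $v$ is $Ind$ of the forest $F-v$ with the neighbour of $v$ given a reduced capacity, so the induction must be set up for that slightly larger class (which is closed under deletion and link at the least vertex), and the same difficulty resurfaces as a purity statement for $\Delta_j\cap\bar F_j$. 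In either approach the degenerate cases---$F$ with at most one vertex, or $k$ at least as large as every component of $F$, so that $Ind_k(F)$ is a simplex---are trivial and form the base.
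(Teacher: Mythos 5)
The reduction in your first two paragraphs is correct and matches the paper's setup exactly: with $z=\min(\sigma\triangle\tau)\in\sigma\setminus\tau$ and $A$ the component of $z$ in $F[\tau\cup\{z\}]$, the whole problem is to exhibit a single vertex $v\in\tau\setminus\sigma$ with $(\tau\setminus\{v\})\cup\{z\}\in Ind_k(F)$. The gap is precisely in producing $v$, and you say so yourself (``the remaining step, and the one I expect to be the crux \ldots Carrying this out cleanly is the case-heavy part of the argument''). Your strategy --- tally the sizes of the ``far subtrees'' of the discarded vertices of $A$ and look for one of size at least $|A|-k$ --- does not obviously close, because $\sigma$ can discard vertices scattered across several branches of $A$ with no single large detachable piece, and your proposed fix (``replace the witness $z$ by a descendant'') is left as a sketch.

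The missing ingredient in the paper is that there is no need to optimize over subtree sizes at all: one simply takes $v=\min\bigl(A\setminus\sigma\bigr)$. Since $\sigma<_{\mathrm{lex}}\tau$ forces every element of $\tau\setminus\sigma$ to exceed $z$, this $v$ automatically satisfies $z<v$ and $v\in\tau\setminus\sigma$. Minimality of $v$ gives $A_{<v}\subseteq\sigma$, and the paper then invokes the tree-compatible order to argue that after deleting $v$ from $A$ the component of $z$ is trapped among vertices of $\sigma$; since $\sigma$ is a face, that component has at most $k$ vertices, and every other component of $F[(\tau\cup\{z\})\setminus\{v\}]$ is contained in $\tau$ and so is also small. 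Any facet $\eta$ containing $(\tau\cup\{z\})\setminus\{v\}$ then does the job, and the check that $\eta<_{\mathrm{lex}}\tau$ and $\eta\cap\tau=\tau\setminus\{v\}$ is routine. So where your argument turns global (counting detached vertices across all the $C_i$), the paper's is local and deterministic --- the right $v$ is the smallest excluded vertex of $A$, full stop --- and the tree-compatible order enters only to bound the one potentially large component after deletion. Your alternative induction sketch is a sensible idea but, as you note, hands you back the same unresolved purity statement, so it does not repair the gap. To salvage your write-up, replace the subtree-size search by this explicit choice of $v$ and then argue the resulting $z$-component sits inside $\sigma$.
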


\begin{proof}

Let $v_1, \ldots, v_n$ be a tree-compatible order on $F$. Order the maximal simplices of $Ind_k(F)$ lexicographically. We claim that this is a shelling order. Let 
$F_1, \ldots, F_r$ denote the maximal simplices in this order. We use the phrase `large component' to mean a component with more than $k$ vertices.

Let $i, j$ be such that $F_i < F_j$. Let $x = \min F_i \setminus F_j$, and let $C$ be the component of the subgraph of $F$ induced by $F_j + x$ which contains the vertex $x$. Since $F_j$ is a facet, 
$|C| > k$. Since $F_i$ is also a facet, $C \setminus F_i \neq \emptyset$. Let $y = \min C \setminus F_i$. We show that $F_j + x - y$ does not contain a large component. 

Let $C_{>y}$ denote vertices of $C$ that are greater than $y$, and let $C_{< y }$ denote vertices of $C$ that are less than $y$. It suffices to show that the only vertex $v > \max C_{<y}$ that is adjacent to some vertex in $C_{<y}$ is the vertex $y$ itself. Then $F_j+x-y$ cannot have a large component, as such a large component would have to be in $F_j$ or $F_i$. 

There is some component $T$ of $F$ containing $C_{<y}$. Moreover, this tree $T$ has a root vertex $r$. Suppose there are vertices $u, v$ such that $u \in C_{<y}$, $v > y$, and $uv$ is an edge. Since $v > u$ and $uv$ is an edge, $v$ is on the unique path from $r$ to $u$. Since $C$ is connected, and $y > u$, $y$ also lies on the unique path from $r$ to $u$.  However, we see that $y$ lies on the unique path from $r$ to $v$, and hence $y > v$, a contradiction. Therefore, $C_{< y}$ has no edge to any vertex of $V_{> y}$.

Let $F_k$ be any facet containing $F_j + x - y$. Then clearly $F_k < F_j$, and $F_i \cap F_k \subseteq F_k \cap F_j = F_j - y$. Therefore, we have a shelling order.\end{proof}

Naturally, given the fact that we have a shelling order on $Ind_k(F)$, it would be nice to classify the homotopy type. 
Also, since we use these shelling orders to give matchings in the next section, classifying the critical cells is necessary.
Given a graph $G$ with a linearly ordered vertex set, and a subgraph $H$, let $\mathscr{C}$ be the components of $H$. Let $C$ be a component of $H$. 
Recall from the introduction that $N_{\mathscr{C}}^{<}(C)$ is the set of vertices $v$ in $G - H$ that are adjacent to some vertex in $C$, and such that $v < \min C$ in the linear ordering.
\begin{theorem}
\label{thm:spanningcells}
Given a tree-compatible order on a forest $F$, spanning simplices of $Ind_k(F)$ are simplicies $\sigma$ such that:
\begin{enumerate}
 \item $F[\sigma]$ consists of $t$ components, $\mathscr{C} = \{C_1, \ldots, C_t\}$, each of size $k$, where $0 \leq t \leq \frac{n}{k}$.
\item For every component $C$ of $F[\sigma]$, we have $N_{\mathscr{C}}^{<}(C) \neq \emptyset$
\item Every vertex in $V(F) \setminus \sigma$ is adjacent to some component $C \in \mathscr{C}$.
\end{enumerate}

\end{theorem}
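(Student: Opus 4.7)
The plan is to invoke Theorem~\ref{thm:matchfromshell}(3) applied to the lexicographic shelling of Theorem~\ref{thm:shellability}: the spanning simplices of $Ind_k(F)$ are exactly the facets $\sigma$ with $\mathscr{R}(\sigma) = \sigma$. Unpacking $\mathscr{R}$ for a lex shelling, for a facet $\sigma$ one has $y \in \mathscr{R}(\sigma)$ iff there exists $x \notin \sigma$ with $x < y$ (in the tree-compatible order) such that $\sigma \setminus \{y\} \cup \{x\} \in Ind_k(F)$. So the theorem reduces to showing that $\sigma$ is a facet satisfying this swap property for every $y \in \sigma$ if and only if conditions (1)--(3) hold.

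For the forward direction, assume (1)--(3). Condition (1) gives $\sigma \in Ind_k(F)$ directly, and together with (3) yields maximality, since any $v \notin \sigma$ attaches to a size-$k$ component. To verify the swap for $y \in \sigma$, let $C$ be the component of $y$, pick $x \in N^<_{\mathscr{C}}(C)$ via (2), and observe that since $F$ is a forest, $x$ has a unique neighbor $u \in C$ and is adjacent to no other component. In $\sigma \setminus \{y\} \cup \{x\}$, the other components are unchanged; $C \setminus \{y\}$ splits into subtrees of size at most $k-1$; and $x$'s component is either $\{x\}$ (if $u = y$) or $\{x\}$ joined with the subtree of $C \setminus \{y\}$ containing $u$ (if $u \neq y$), of size at most $k$. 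Hence the swap produces a face and $y \in \mathscr{R}(\sigma)$.

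For the reverse direction, condition (3) follows from maximality: otherwise $\sigma \cup \{v\}$ with $v$ adjacent to no component would still be a face. Granted (1), condition (2) is straightforward: for each component $C$ with $y = \min C$, the spanning swap $x$ is adjacent to some component $D$ by (3), and $D \neq C$ would force $|\{x\} \cup D| \geq k+1$, contradicting the face condition, so $D = C$. Forest-ness then gives a unique neighbor $u \in C$, and the same face-size argument applied to $x$'s remaining neighbors (children of $x$ in its tree, of index smaller than $x < y = \min C$, and hence not in $C$) forbids them from lying in $\sigma$. Thus $x$ is adjacent only to $C$, and $x \in N^<_{\mathscr{C}}(C)$.

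The principal obstacle is condition (1). Assuming for contradiction that some component $C$ has $|C| < k$, one takes $m = \min C$, which is a leaf of $F[C]$ by tree-compatibility, and tries to show no valid swap $x < m$ exists. The delicate point is that when $\sigma$ contains multiple small components, a swap vertex may legitimately attach to one of them without violating the face condition, so a purely local argument fails. I would choose $C$ to be a smallest small component, analyze $x$'s possible locations in $F$ relative to $C$, and use the observation that the global minimum $v_1 \in V(F)$ must lie outside $\sigma$ (otherwise applying spanning to $y_0 = \min \sigma = v_1$ would demand an $x < v_1$, which does not exist) in order to propagate a contradiction through the tree-compatible order.
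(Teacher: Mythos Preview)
Your forward direction is correct and more explicit than the paper, which simply declares that direction ``clear.'' Your derivations of (3) from maximality and of (2) from (1) are also fine. The gap is precisely where you flag it: you do not prove (1). You outline a contradiction argument starting from a hypothetical small component and promise to ``propagate a contradiction through the tree-compatible order,'' but you never carry this out, and the difficulty you yourself identify---that a swap vertex for $m=\min C$ can attach to a different small component without violating the face condition---is real and is not resolved by choosing $C$ smallest.

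The paper avoids this difficulty entirely by arguing constructively rather than by contradiction on a small component. It processes $V(F)$ greedily: set $x=\min V(F)$ and observe that $x\notin\sigma$ (you noted this too). Since $\sigma$ is a facet, $\sigma+x$ has a large component $C\ni x$; because $x$ is minimal in a tree-compatible order it is a leaf of $F$, so $C\setminus\{x\}$ is connected and hence an entire component of $F[\sigma]$ of size exactly $k$, with $x\in N^{<}_{\mathscr{C}}(C\setminus\{x\})$. Now delete $N[C]$ and repeat with the new minimum $x'$. The key point is that every $z<x'$ with $z\notin\sigma$ lies in $N[C]$, so $\sigma-x'+z$ contains the size-$k$ block $C\setminus\{x\}$ together with $z$, giving a large component; hence no swap exists for $x'$ and again $x'\notin\sigma$. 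Iterating peels off one size-$k$ component at a time and simultaneously verifies (1), (2), and (3). This sidesteps the interaction between multiple small components that stalls your approach: the argument never posits a small component, it just discovers the size-$k$ components in order.
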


\begin{proof}
 Clearly, if a subset $I$ has all the stated properties, then it is a facet, and $\mathscr{R}(I) = I$. So suppose we have a facet $I$ 
such that $\mathscr{R}(I) = I$. Let $x = \min V(F)$. If $x \in I$, then $x \not\in \mathscr{R}(I)$ by definition of the restriction map. Thus $x \not\in I$. However, since $I$ is a facet, $I + x$ must contain a large component. Let $C$ be the lexicographically least large component (of size $k+1$). Then $x \in C$. Consider removing $N[C]$ from $V(F)$, obtaining a new set $V'$. Note that $x \in N^<_{\mathscr{C}}(C)$.

Now let $x = \min V'$. Again, we see that $x \not\in I$. Moreover, since $x \not\in N[C]$, $I+x$ contains a large component $C'$ of size $k+1$ that is disjoint from $C$. 
Given our choice of $x$, every element of $C'$ is greater than $x$. Choose $C'$ to be lexicographically least, and remove $N[C']$ from $V'$. Continuing in this manner, we see that $I$ is a disjoint
 union of components of size $k$, and for each component $C$ we have $N^<_{\mathscr{C}}(C) \neq \emptyset$. Finally every remaining vertex is adjacent to some component.
\end{proof}

\begin{example}
 Let $G$ be a graph with vertex set $\{1, \ldots, 9 \}$, and edge set $\{ 14, 24, 34, \\ 45, 58, 68, 78, 89\}$. Then the natural ordering $1 < 2 < \ldots < 9$ is a tree-compatible order, 
where $9$ is the root vertex. $Ind_3(G)$ has many facets for this complex. However, one can check that the spanning simplices correspond to vertex sets $234789$ and $458$. So $Ind_3(G)$ is homotopy equivalent to a 
wedge of two spheres, one of dimension $2$ and one of dimension $5$. In particular, $Ind_k(G)$ does not always correspond to a wedge of equidimensional spheres.
\end{example}

\section{Matching Algorithm and Main Results}
\label{sec:matchalg}
In this section, we define an optimal matching on $Perm_k(W)$, and prove the main theorems from the introduction. Given $W$ with simple reflections $S$, order $S$ so that we have a tree-compatible order on the Dynkin diagram $D(W)$ (where tree-compatible order is defined in the previous section).

Recall that elements of $Perm_k(W)$ correspond to parabolic cosets $wW_I$ that do not contain a coset $w'W_J$ where $W_J$ is $k$-parabolic. Given such a coset $wW_I$, suppose $w$ is of maximum length in $wW_I$. Finally, let $\mathscr{P}(wW_I, s) = \{J \subset I: W_{J+s} \mbox{is $k$-parabolic} \}$, and let $Des(w)$ be the descent set of $w$. Then we match $wW_I$ based on the following algorithm:

\textbf{Let} $L = S$.

 \textbf{While} $L \neq \emptyset$

\hspace{1cm} \textbf{Let} $s = \min L$

\hspace{1cm} \textbf{If} $s \not\in Des(w)$

\hspace{2cm}    \textbf{Set} $L = L - s$

\hspace{1cm} \textbf{Else If} $s \in I$

\hspace{2cm}     \textbf{Return} $wW_{I\setminus \{s \}}$

\hspace{1cm} \textbf{Else If} $wW_{I + s} \not\in Perm_k(W)$

\hspace{2cm} \textbf{Let } $J = \min \mathscr{P}(wW_I, s)$

\hspace{2cm}        \textbf{Set} $L = L - J - s$

\hspace{1cm}    \textbf{Else}

\hspace{2cm}        \textbf{Return} $wW_{I \cup \{s\}}$

\textbf{End While}

\textbf{Return} $wW_I$

Given a coset $wW_I$, we refer to the coset the algorithm outputs as $M(wW_I)$. We match $wW_I$ with $M(wW_I)$ if $M(wW_I) \neq wW_I$. 
Otherwise, $wW_I$ is \emph{critical}. Note that it is not entirely obvious that this is a matching. However, we show that the matching given by this algorithm is one arising from shellability of generalized independence complexes from the last section.

First, note that there is a natural order-preserving map $\varphi: \mathscr{F}(Perm_k(W)) \to W$, where $W$ is given right weak order. 
The map is given by sending a coset $wW_I$ to its maximal length representative $w$. Moreover, given $w \in W$, it is not hard to see that $\varphi^{-1}(w)$ is isomorphic to the face poset of $Ind_{k-2}(D_w)$, where $D_w = D[Des(w)]$. That is, given a coset $wW_I \in Perm_k(W)$ with maximal length element $w$, we have $I \subset Des(w)$, and the subgraph of $D$ induced by $I$ has no component of size $k-1$. Thus, we can conclude, that given a tree-compatible ordering on $S$, the techniques of the last section give us a collection of acyclic matchings $M_w$, one for each $w \in W$. Then the Patchwork Theorem gives us an acyclic matching $M'$ on $Perm_k(W)$.

Let $wW_I \in Perm_k(W)$ with maximum length element $w$. Then $I \in Ind_{k-2}(D_w)$. Let $F_j$ be the lexicographically first facet containing $I$. 
Then $M'(wW_I) = wW_{J}$ where $J$ is obtained from $I$ by adding or removing $x = \min F_j \setminus \mathscr{R}(F_j)$, depending on whether or not $x \in I$. If $F_j = \mathscr{R}(F_j)$, then we define $M'(wW_I) = wW_I$. We have thus constructed an involution coming from our matching.

\begin{theorem}
Let $M'$ be obtained as described in the above paragraph. Then $M = M'$.

\end{theorem}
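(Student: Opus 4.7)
The plan is to identify $M$ with $M'$ by showing they coincide on each fiber of the order-preserving map $\varphi: \mathscr{F}(Perm_k(W)) \to W$ that sends a coset to its max-length representative. First I would observe that $M$ preserves fibers, since each return case outputs $wW_{I \triangle \{s\}}$ for some $s \in Des(w)$, keeping $w$ as the max-length representative. This reduces the problem to comparing the two matchings on $\varphi^{-1}(w) \cong \mathscr{F}(Ind_{k-2}(D_w))$, on which $M'$ is the shelling-based involution of Theorem \ref{thm:matchfromshell}: given $I$, locate the lex-first facet $F^*$ of $Ind_{k-2}(D_w)$ containing $I$, and define $M'(wW_I) = wW_{I \triangle \{x\}}$ for $x = \min F^* \setminus \mathscr{R}(F^*)$, with $wW_I$ critical iff $I = F^* = \mathscr{R}(F^*)$ is a spanning simplex.

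Next I would give a greedy description of $F^*$: start with $F^* := I$ and, processing $s \in Des(w) \setminus I$ in the tree-compatible order, adjoin $s$ to $F^*$ whenever $F^* \cup \{s\} \in Ind_{k-2}(D_w)$; a short exchange argument shows this produces the lex-first facet containing $I$. Running the algorithm in parallel with the greedy, the easy cases follow immediately: elements skipped when $s \notin Des(w)$ lie outside $D_w$ and hence outside $F^*$; an element $s$ triggering the third branch of the algorithm also lies outside $F^*$, because $s \in F^*$ would give $I \cup \{s\} \subseteq F^* \in Ind_{k-2}(D_w)$, contradicting the branch hypothesis that $wW_{I+s} \notin Perm_k(W)$; and when the while-loop completes without returning, Theorem \ref{thm:spanningcells} identifies $I$ with a spanning simplex of $Ind_{k-2}(D_w)$, so $wW_I$ is also critical under $M'$.

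For the two return branches ($s \in I$ or $s \notin I$ with $I \cup \{s\} \in Ind_{k-2}(D_w)$), I would show $s \notin \mathscr{R}(F^*)$ by contradiction: any facet $F' < F^*$ containing $F^* \setminus \{s\}$ must have $y = \min(F' \triangle F^*) \in F' \setminus F^*$ with $y < s$, giving $(F^* \setminus \{s\}) \cup \{y\} \subseteq F' \in Ind_{k-2}(D_w)$. Since $y \notin F^* \supseteq I$ and $y \in Des(w)$, the algorithm must have triggered the third branch at $y$, forcing $I \cup \{y\} \notin Ind_{k-2}(D_w)$. When $s \notin I$ this is immediate, since $(F^* \setminus \{s\}) \cup \{y\} \supseteq I \cup \{y\}$; when $s \in I$, the invariant $s \notin J^y$ (where $J^y$ denotes the lex-min $J$ chosen in the third branch at $y$; this holds because $s \in L$ at the moment of return) lets one verify that the bad component of $y$ in $D[I \cup \{y\}]$ survives in $D[(I \setminus \{s\}) \cup \{y\}]$, again contradicting $(F^* \setminus \{s\}) \cup \{y\} \in Ind_{k-2}(D_w)$.

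The remaining step is to prove $s$ is the minimum element of $F^* \setminus \mathscr{R}(F^*)$, which reduces to the main lemma: every $j \in I$ pre-removed from $L$ (so $j \in J^t = \min \mathscr{P}(wW_I, t)$ for some $t < j$) lies in $\mathscr{R}(F^*)$. The natural witness is a facet extending $(F^* \setminus \{j\}) \cup \{t^*\}$, where $t^*$ is the smallest $t$ with $j \in J^t$. Showing $(F^* \setminus \{j\}) \cup \{t^*\} \in Ind_{k-2}(D_w)$ is the most technical point: the subtree $J^{t^*} \cup \{t^*\}$ has exactly $k-1$ vertices and removing $j$ leaves a forest of total size $k-2$, but one must also control how $t^*$ reattaches to the part of $F^* \setminus \{j\}$ outside this subtree. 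The tree-compatibility of the order on $S$, together with the lex-minimality of $J^{t^*}$ forcing $J^{t^*} \cup \{t^*\}$ to consist of the leftmost $k-1$ vertices of the component of $t^*$ in $D[I \cup \{t^*\}]$, should ensure that every component of $(F^* \setminus \{j\}) \cup \{t^*\}$ has size at most $k-2$. I expect this tree and lex-minimality argument to be the main obstacle.
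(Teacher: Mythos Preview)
Your proposal is correct and follows essentially the same route as the paper: both arguments reduce to showing that the reflection $s$ returned by the algorithm equals $\min\bigl(F^*\setminus\mathscr{R}(F^*)\bigr)$, where $F^*$ is the lex-first facet of $Ind_{k-2}(D_w)$ containing $I$, and both hinge on the same technical point (elements of $I$ removed from $L$ in the third branch lie in $\mathscr{R}(F^*)$, witnessed by $(F^*-j)+t^*$, with the tree-compatible order controlling the component of $t^*$). The only difference is organizational: the paper maintains the two invariants $F_j\subseteq I\cup L_i$ and $I\setminus L_i\subseteq\mathscr{R}(F_j)$ inductively through the while loop, whereas you run the algorithm to completion and then verify the same facts via your greedy description of $F^*$; your acknowledged ``main obstacle'' is exactly the inductive step the paper spends the most effort on.
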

\begin{proof}
 Let $wW_I$ be a coset with maximal length representative $w$, and suppose the while loop for the matching algorithm for $M$ runs $m$ times, 
and let $L_i$ be the list $L$ after running the while loop of the matching algorithm $i$ times.
Also, let $F_j$ be the lexicographically least facet of $Ind_k(D_w)$ containing $I$.

We claim that for each $i$, $1 \leq i \leq m$,  $F_j \subseteq (I \cup L_i)$, and $I \setminus L_i \subseteq \mathscr{R}(F_j)$. Let $s_i = \min L_i$.
Suppose at step $i$, $s_i \not\in I$, and the algorithm chooses not to add it to $I$. Then either $s_i \not\in Des(w)$, or $W_{I + s_i}$  
contains a $k$-parabolic subgroup.
Thus $s_i \not\in F_j$, so we have $F_j \subseteq (I \cup L_{i+1})$. Also, if $s_i \not\in Des(w)$, 
then $I \setminus L_{i+1} \subseteq \mathscr{R}(F_j)$.
Suppose $W_{I + s_i }$ contains a $k$-parabolic subgroup. Then so does $W_{F_j + s_i}$. Let $J$ be minimal such that $W_{J}$ is a $k$-parabolic subgroup contained in $W_{F_j + s_i}$. Note that $s_i \in J$.
 We claim that $J_{> s_i} \subseteq \mathscr{R}(F_j)$. This is clear if $J_{> s_i} = \emptyset$. Otherwise, fix $y \in C_{> s_i}$. If $K = F_J + s_i - y \in Ind_{k-2}(D_w)$, then $y \in \mathscr{R}(F_j)$.
If not, then $uW_K$ contains a $k$-parabolic subgroup $W_{J'}$,
which can be chosen to be minimal with respect to the condition $s_i \in J'$. We claim that $J' < J$. Since we have a tree-compatible ordering, there are no edges between $S_{> y}$ and $S_{<y}$. Since $J'$ induces a connected subgraph of $D$, not involving $y$, it follows that $J' \subset S_{< y}$. Since $y \in J$, and $|J'| = k-1$, we must have $\min J' < \min J$. However, this is a contradiction, since $s_i \in J'$, and $J$ was chosen to be lexicographically minimal. Therefore $K \in Ind_k(D_w)$, and hence $y \in \mathscr{R}(F_j)$ for all $y \in C_{> s_i}$. 
Thus $I \setminus L_{i+1} \subseteq \mathscr{R}(F_j)$.
Therefore we have proven the claim for each $i$ by induction.

Now we would like to show that when the algorithm terminates, $M(wW_I) = M'(wW_I)$.
Clearly, if $M(wW_I) = wW_I$, we see from our claims that $\mathscr{R}(F_j) = I = F_j$, whence $wW_I$ is left unmatched in the union of acyclic matchings, so $M'(wW_I) = wW_I = M(wW_I)$.
Otherwise, suppose the matching algorithm terminated after $m$ steps, and matched $wW_I$ to another coset, by either removing or adding a reflection $s$ to $I$. 
If the algorithm terminates by adding a reflection $s$ to $I$, we see that $s \in F_j$. 
By our properties, 
$s \in F_j \setminus \mathscr{R}(F_j)$. Since this is the first reflection we could possibly remove, we see that $s$ is the minimum of $F_j \setminus \mathscr{R}(F_j)$, and thus $M(wW_I) = M'(wW_I)$.

Suppose instead that the algorithm removes some $s \in I$. 
Suppose $s \in \mathscr{R}(F_j)$. Then there exists $t$ such that $F_j + t - s$ is contained in a lexicographically smaller facet. 
Moreover, $D_w[F_j + t]$ has a component with at least $k-1$ vertices.
Let $J$ be minimum such that $J \subset F_j + t$ and $W_J$ is $k$-parabolic. If $\{s, t\}$ is not a subset of $J$, then we obtain a contradiction to the fact that 
$uW_{F_j}, uW_{F_j + t - s} \in Perm_k(W)$. Thus, $\{s, t\} \subseteq J$, and must be in the same component of $J$. However, since $J$ was chosen to be minimal, 
when the algorithm studied $t$, it would have removed $J$, and thus $s$, from $L$. Therefore we have a contradiction, and $s \not\in \mathscr{R}(F_j)$. Again,
by our previous claims, we can conclude that 
$s \in F_j \setminus \mathscr{R}(F_j)$. It is also the first reflection encountered that could be removed, and so $s$ is the minimum of $F_j \setminus \mathscr{R}(F_j)$, and thus $M(wW_I) = M'(wW_I)$. \end{proof}

\begin{proposition}
\label{prop:critcells}
Let $wW_I \in Perm_k(W)$ with maximum length element $w$. If $wW_I$ is critical, then the following hold:
\begin{enumerate}
\item $I \subset Des(w)$.
 \item $D[Des(w)]$ consists of $t$ components, $\mathscr{C} = \{C_1, \ldots, C_t \}$, each of size $k-2$
\item For every component $C$ of $D[I]$, we have $N^<_{\mathscr{C}}(C) \cap Des(w) \neq \emptyset$
\item For every $v \in Des(w) \setminus I$, there exists a component $C$ such that $v \in N(C)$.
\end{enumerate}

\end{proposition}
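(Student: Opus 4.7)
The plan is to reduce the characterization of critical cells directly to the spanning simplex characterization in Theorem \ref{thm:spanningcells}, using the identification $M = M'$ from the preceding theorem. Since $M'$ was constructed via the Patchwork Theorem from acyclic matchings $M_w$ on the fibers $\varphi^{-1}(w) \cong \mathscr{F}(Ind_{k-2}(D_w))$, a coset $wW_I$ is unmatched under $M$ if and only if $I$ is critical in $M_w$. Because $M_w$ is the specific matching arising from the lexicographic shelling of $Ind_{k-2}(D_w)$ in Theorem \ref{thm:matchfromshell}, criticality amounts to $I$ being a \emph{spanning} facet of $Ind_{k-2}(D_w)$ in the sense of that theorem, i.e. $\mathscr{R}(I) = I$.

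With this reduction in hand, I would apply Theorem \ref{thm:spanningcells} directly to the forest $D_w = D[Des(w)]$ (a subforest of the Dynkin diagram $D$) with the parameter $k-2$ in the role of $k$. The three conditions from that theorem translate into the four conditions of the proposition almost line-for-line: condition (1) is immediate from $I \in \mathscr{F}(Ind_{k-2}(D_w))$, since the vertex set is exactly $Des(w)$; condition (2) is the statement that the large components of $D_w[I] = D[I]$ have size exactly $k-2$; condition (3) is the nonemptyness of $N^<_{\mathscr{C}}(C)$ computed in $D_w$; and condition (4) is the requirement that every vertex of $V(D_w) \setminus I$ is adjacent to some component of $\mathscr{C}$.

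The main technical point, and the one I expect to require the most care, is reconciling the two senses in which $N^<_{\mathscr{C}}(C)$ can be read: as a neighborhood in the full Dynkin diagram $D$ (the definition from the introduction) versus as a neighborhood inside the induced subforest $D_w$ (which is what Theorem \ref{thm:spanningcells} uses). Since $D_w$ is an induced subgraph of $D$, a vertex $v \in Des(w)$ is adjacent in $D_w$ to a vertex of $C \subseteq Des(w)$ if and only if it is adjacent in $D$, and the "not adjacent to any other component" clause is vacuously preserved under this restriction. Consequently, the set $N^<_{\mathscr{C}}(C)$ computed inside $D_w$ equals $N^<_{\mathscr{C}}(C) \cap Des(w)$ when $N^<_{\mathscr{C}}(C)$ is interpreted in $D$, giving condition (3) of the proposition.

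A secondary bookkeeping issue is to verify that the restriction of the fixed tree-compatible order on $S$ remains tree-compatible on each component of $D_w$; this just requires picking the maximum vertex in each component of $D_w$ (in the inherited order) as the root of that component and observing that paths to the root in $D_w$ are subpaths of paths to the original root in $D$. Once this is checked, Theorem \ref{thm:shellability} applies to $Ind_{k-2}(D_w)$ and the translation above goes through unchanged, yielding the four stated conditions.
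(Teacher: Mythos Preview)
Your proposal is correct and follows exactly the route the paper takes: the paper's proof is a single sentence deferring to the poset map $\varphi$ and the spanning-simplex description in Theorem~\ref{thm:spanningcells}, and your argument unpacks precisely that deferral via $M = M'$ and the Patchwork construction. Your additional care in reconciling $N^<_{\mathscr{C}}(C)$ computed in $D_w$ versus in $D$, and in checking that the restricted order on $Des(w)$ remains tree-compatible, fills in details the paper leaves implicit but does not change the approach.
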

\begin{proof}
 The result follows from the poset map $\varphi$, and the description of critical cells given in the last section.
\end{proof}

Note then that $c_i$, the number of critical cells of dimension $i$, is $0$ unless $i = t(k-2)$ for some $0 \leq t \leq \frac{n}{k}$. 
\begin{lemma}
 The matching $M$ is optimal.
\end{lemma}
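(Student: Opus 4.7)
The plan is to show that the Morse complex $M_k(W)$ arising from $M$ already has exactly $\beta_i(Perm_k(W))$ cells in dimension $i$, by showing that all boundary maps of $M_k(W)$ vanish. Once this is established, Theorem \ref{thm:fundthm}(b) gives $H_i(Perm_k(W)) \cong H_i(M_k(W)) \cong \mathbb{Z}^{c_i}$, so $c_i = \beta_i$ (and no torsion appears), proving optimality.

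By Proposition \ref{prop:critcells} the number $c_i$ of critical $i$-cells is zero unless $i = t(k-2)$ for some integer $t$ with $0 \le t \le n/k$. This immediately suggests splitting the argument into two cases according to the size of $k-2$.

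For $k \geq 4$: Here $k-2 \geq 2$, so the dimensions supporting critical cells are spaced at least two apart. Consequently for every $i$ one of the groups of critical chains in dimensions $i$ and $i-1$ is zero, which forces the Morse boundary $\partial_i$ to vanish identically. The Morse complex is thus a direct sum of trivial complexes, giving $H_i(Perm_k(W)) \cong \mathbb{Z}^{c_i}$ at once; as a bonus, this also recovers parts (1) and (2) of Theorem \ref{thm:homology}. In this range the lemma requires no further work.

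For $k = 3$: Now $k-2 = 1$, so critical cells can sit in consecutive dimensions and the dimension gap argument fails. Here I would exploit the cubical structure of $Perm_3(W)$ noted in the Remark after Corollary \ref{cor:complex}: each face is a cube whose face operators have incidence numbers $\pm 1$ determined by which commuting generator is being added or removed. Using Theorem \ref{thm:fundthm}(c), the Morse incidence number $[\tau : \sigma]$ between two critical cells is a signed sum $\sum_c \omega(c)$ over alternating paths $c$ from $f(\sigma)$ to $f(\tau)$. The strategy is to produce, for each pair of critical cells $(\sigma,\tau)$ with $\dim\sigma = \dim\tau + 1$, an explicit sign-reversing involution on the set of alternating paths connecting them. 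Cancellation then forces $[\tau:\sigma] = 0$ for all such pairs, the Morse boundary is zero, and the argument of the first case concludes the proof.

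The main obstacle is the $k=3$ case: one must actually exhibit the involution on alternating paths and verify the sign reversal. The cubical structure should let us encode an alternating path as a sequence of parabolic cosets $wW_I$ in which the generators in $I$ pairwise commute, together with data about the order in which generators are swapped in or out along the path; I would expect the involution to be defined by locating the first position where a pair of commuting ``swap'' moves can be transposed, and swapping them. Routine but careful bookkeeping of signs and of how $M$ interacts with the projection to the right weak order on $W$ will do the rest.
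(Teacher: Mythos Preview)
Your overall strategy coincides with the paper's: split into $k>3$ and $k=3$, dispose of $k>3$ by the dimension-gap argument forcing the Morse boundary to vanish, and for $k=3$ build a sign-reversing involution on alternating paths via Theorem~\ref{thm:fundthm}(c). The paper's proof of the lemma says exactly this, deferring the $k=3$ case to Section~\ref{sec:optimality}.

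The one place to flag is your sketch of the involution itself. You propose ``locating the first position where a pair of commuting swap moves can be transposed, and swapping them.'' That is not what the paper does, and it is not clear such a transposition is even available: alternating paths must alternate between an arbitrary downward step and an \emph{upward} step forced by $M$, so two adjacent ``swap'' moves are not interchangeable in general. The paper's involution instead exploits the cube structure in a different way: from a cell $wW_I$ the two parallel facets $wW_{I-s}$ and $wsW_{I-s}$ carry opposite incidence numbers, and the paper shows (Lemma~\ref{lem:pathscancel}) that following the matching from either facet, within a fixed ``ascending block'' of reflections, leads to the \emph{same} terminal coset with opposite total weight. Proposition~\ref{prop:pathpattern} then shows every alternating path between critical cells factors so that its tail is one of these two algorithmically-built subpaths $p(wW_I,s)$ or $\hat{p}(wW_I,s)$; the involution swaps that tail for its partner. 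This is more delicate than the heuristic you describe, and the decomposition in Proposition~\ref{prop:pathpattern} is the real content you are missing.
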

\begin{proof}
 Suppose $k > 3$. Then the fact that there are only critical cells of dimension $t(k-2)$ for $0 \leq t \leq \frac{n}{k}$ implies that 
the boundary operator of $\Delta_c$, the complex given in Theorem \ref{thm:fundthm}, must be the $0$-map. Hence the cellular chain groups of $\Delta_c$ are isomorphic to the homology groups. The case $k=3$ is proven below in Section \ref{sec:optimality}, and is considerably more involved.
\end{proof}

\begin{proof}[Proof of Theorems \ref{thm:minimality}, \ref{thm:homology}, \ref{thm:bettis}]
 Immediate from the previous Lemma, Proposition \ref{prop:critcells}, Theorem \ref{thm:fundthm}, and the observation about the number of critical cells of a given dimension.
\end{proof}

\section{Betti Numbers for Irreducible Coxeter Groups}
\label{sec:bettis}

\subsection{The $k$-Equal Arrangement}

We can now describe the algorithm for the matching $M$ using terminology from set compositions.
Given an element $(B_1, B_2, \ldots, B_t)$ we consider pairs of adjacent blocks $B_i$ and $B_{i+1}$. We start with $i=1$. 
\begin{enumerate}
\item If $B_i$ is not a singleton, we match \[(B_1, \ldots, B_i, B_{i+1}, \ldots, B_t)\] with \[(B_1, \ldots, B_{i-1}, \{\max(B_i)\}, B_i \setminus \{\max(B_i)\}, \ldots, B_t).\] 
\item If there is a ascent from $B_i$ to $B_{i+1}$, we set $i = i+1$ and start over at step one.
\item If $|B_{i+1}|=k-1$, then we set $i=i+2$ and start over at step one. 
\item We match \[(B_1, \ldots, B_i, B_{i+1}, \ldots, B_t)\] with the element \[(B_1, \ldots, B_i\cup B_{i+1}, \ldots, B_t).\] 

\end{enumerate}

An example of the matching along with some critical elements is shown in Figure \ref{fig:matching}. 
\begin{figure}[htbp]
  \begin{center}
    \includegraphics[width=0.75\textwidth]{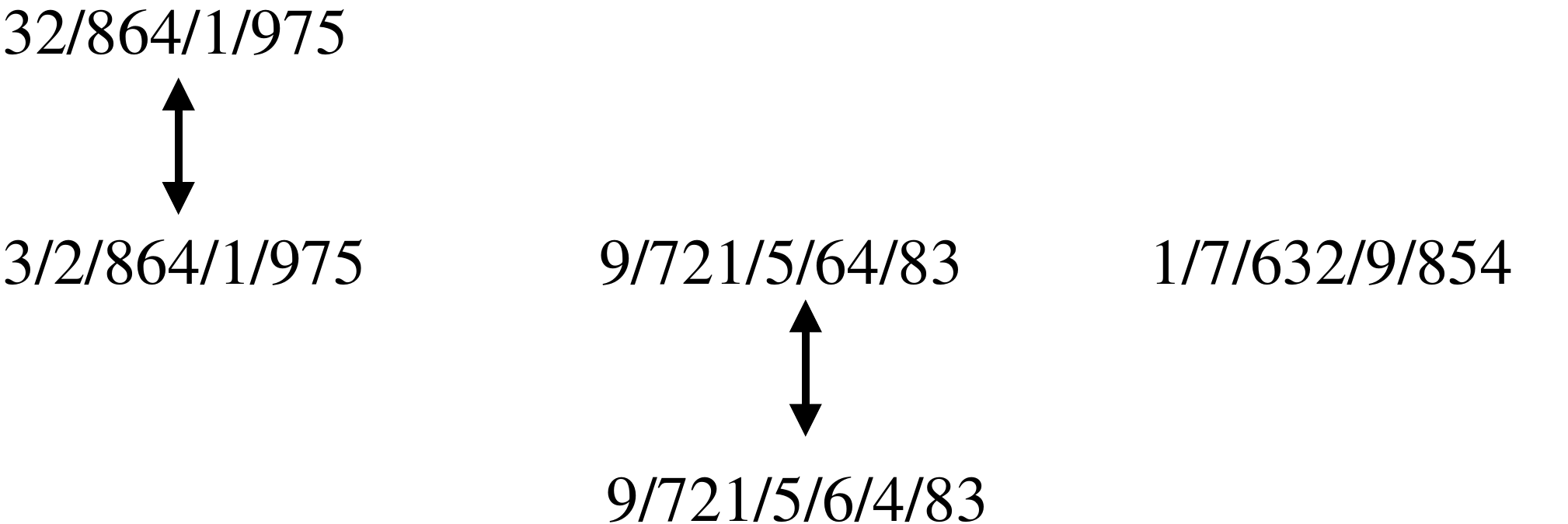}
    \caption{A matching between elements in $Perm_4(A_8)$}
    \label{fig:matching}
  \end{center}
\end{figure}

It remains to compute the number of critical elements. A weak integer composition of $n$ is a sequence of nonnegative integers $\mu = (\mu_1, \ldots, \mu_k)$ such that $\mu_1 + \mu_2 + \cdots + \mu_k = n$. 
We refer to $\ell(\mu) = k$ as the length of $\mu$, and $|\mu| = \sum_{i=1}^{\ell(\mu)} \mu_i$. We use $\mu \models n$ to say that $\mu$ is a weak integer partition with $|\mu| = n$. Given a weak integer compositions $\mu$, let $\binom{n}{\mu} = \binom{n}{\mu_1, \ldots, \mu_k}$.
Let $t$ be an integer such that $1 \le t \le n/k$ . Then the number of unmatched cells in dimension $t(k-2)$ is given by:

\[\sum_{\substack{\mu \models n+1 \\ \ell(\mu) = t+1 \\ \mu_m \geq k, \forall 1 \leq m \leq t}} \binom{n+1}{\mu} \prod^{t}_{m=1} \binom{\mu_m-1}{k-1} \]

where the sum is over all integer compositions of $n+1$ into $t + 1$ parts, such that each part, with the exception of the last part, has size at least $k$. 
The formula comes from the following: consider a composition $A_1, \ldots, A_{t+1}$ of $[n+1]$ into $t + 1$ parts whose sizes are given by $\mu_1, \ldots, \mu_{t+1}$. 
For each part, besides the last one, take $k-1$ elements $x$ that are not the maximum of that part. Make this a block, and place all other elements of $A_i \setminus X$ 
as singletons in increasing order before $X$, to get a set composition $\mathscr{A}_i$ that consists of singletons, and ends with a block of size $k-1$.
 Finally, partition $A_{t+1}$ into singletons and place them in increasing order to obtain a set composition $\mathscr{A}_{t+1}$. Then let $C$ be given by 
starting with the blocks of $\mathscr{A}_1$ in order, followed by the blocks of $\mathscr{A}_2$ in order, and so on. This creates a critical set composition.
Clearly this gives all set compositions that meet our criteria for not being matched.
Thus we have successfully computed $\widetilde{\beta}_{t(k-2)}(\mathcal{M}(\mathscr{A}_{n,k}))$.
We note that this formula was also found by Peeva, Reiner and Welker \cite{peeva-reiner-welker}.

\subsection{The Signed $k$-equal Arrangement}
For type $B$, one can use type $B$ set compositions to understand our matching. A type $B$ set composition is a sequence $C = (B_0, \ldots, B_k)$ of disjoint subsets of $\{\bar{n}, \bar{n-1}, \ldots, \bar{1}, 1, 2, \ldots, n \}$, such that $0 \in B_0$, and for each $i$, either $i$ is in some block of $C$, or $\bar{i}$ is, but not both. Moreover, we require elements in $B_0$ to be unbarred.

\begin{figure}[htbp]

 \label{bmatching}
\scalebox{.6}{\input{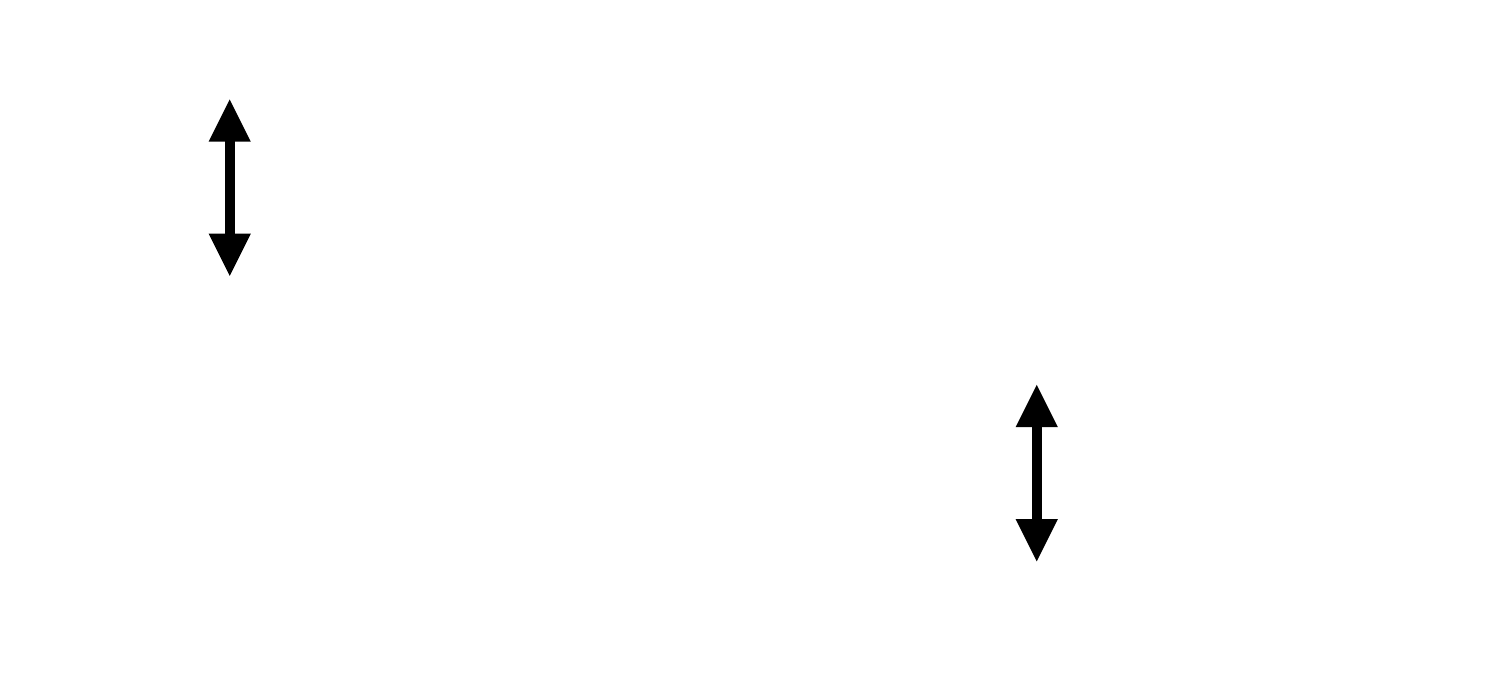_t}}
\caption{An example of the matching for $B_9$ with $k=4$}

\end{figure}

Using our optimal matching, and discrete Morse theory, we obtain the following:
\begin{theorem}
 \label{thm:typeB}
$H_{t(k-2)}(\mathcal{M}(\mathscr{B}_{n,k}))$ is free abelian of rank 
\[\sum_{\substack{\mu \models n \\ \ell(\mu) = t+1 \\ \mu_m \geq k, \forall 1 \leq m \leq t}} \binom{n}{\mu} 2^{n-\mu_1+k-1} \prod_{m=1}^t \binom{\mu_m - 1}{k-1} + \sum_{\substack{\mu \models n \\ \ell(\mu) = t+1 \\ \mu_m \geq k, \forall 2 \leq m \leq t \\ \mu_1 = k-1}} \binom{n}{\mu} 2^{n-k+1} \prod_{m=2}^t \binom{\mu_m - 1}{k-1} \]
\end{theorem}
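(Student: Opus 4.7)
The plan is to apply Proposition \ref{prop:critcells} and Theorem \ref{thm:bettis} to $W = B_n$, with the linear order on the simple reflections chosen so that the sign-flip generator $s_1$ comes first and $s_2, \ldots, s_n$ follow along the Dynkin path. Identifying elements of $B_n$ with signed permutations of $[n]$, a critical cell of dimension $t(k-2)$ corresponds to a pair $(w, T)$ where $D[T]$ has exactly $t$ components of size $k-2$, each preceded in the linear order by an adjacent descent of $w$, and every remaining element of $\mathrm{Des}(w)$ is adjacent to some such component.

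Next I would encode this data as a type $B$ signed set composition $(B_0, B_1, \ldots, B_\ell)$ of $\{\bar n, \ldots, \bar 1, 1, \ldots, n\}$, where $B_0$ is the distinguished block containing only unbarred entries together with $0$, and the blocks $B_i$ for $i \geq 1$ are either singletons or ``big blocks'' of size $k-1$. The $t$ big blocks correspond to the $t$ components of $D[T]$ together with the adjacent descent forced by condition $(3)$ of Theorem \ref{thm:bettis}, while condition $(4)$ forces all non-big-block entries past $B_0$ to be singleton descent positions adjacent to some big block. This parallels the $A_n$ set-composition description given earlier in the section.

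The counting now splits according to whether or not the first component of $D[T]$ involves $s_1$. In Case 1 no component uses $s_1$, so $B_0$ has size $\mu_1 \geq k$, and the composition $\mu = (\mu_1, \ldots, \mu_{t+1}) \models n$ records the number of entries in $B_0$, in each big block together with its accompanying singletons, and in the final tail. The multinomial $\binom{n}{\mu}$ distributes the underlying unsigned values, $\prod_{m=1}^t \binom{\mu_m - 1}{k-1}$ selects the big-block entries in each segment (the segment maximum is forced to sit at the top of the block), and $2^{n-\mu_1+k-1}$ counts the admissible sign patterns once the forced-unbarred entries of $B_0$ and the sign choices within and adjacent to each big block are resolved. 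Case 2 is the specialization $\mu_1 = k-1$ in which $B_0$ itself plays the role of the first big block; this contributes the second sum with sign factor $2^{n-k+1}$.

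The main obstacle will be pinning down the exponent of $2$ in each case: each descent boundary between consecutive singletons encodes a local sign inequality, and the $k-1$ entries of a big block produce a cascade of forced sign patterns that must be reconciled with the global count of barred and unbarred entries. Once that bookkeeping is verified block by block, Theorem \ref{thm:fundthm}, combined with the fact from Proposition \ref{prop:critcells} that critical cells occur only in dimensions $t(k-2)$ so that the cellular differential of the Morse complex vanishes, yields both the stated rank and the freeness of $H_{t(k-2)}(\mathcal{M}(\mathscr{B}_{n,k}))$.
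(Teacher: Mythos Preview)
Your overall strategy---translate the critical cells of Proposition~\ref{prop:critcells} into type $B$ set compositions, split according to whether the component nearest the special node appears immediately, and then count---is exactly what the paper does. The proof in the paper is short precisely because all of the work has already been done in the type $A$ subsection and in the general critical-cell description.

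However, your bookkeeping in the translation is off, and this is not merely cosmetic: it is what produces the exponents of $2$ that you flag as ``the main obstacle.'' In the paper's parametrization, $\mu_1$ in the first sum is \emph{not} the size of the zero block $B_0$. Rather, $\mu_1$ counts the initial run of singletons \emph{together with} the $k-1$ entries of the first big block $B_i$; since the case is ``$B_1$ is a singleton'' there is at least one singleton before $B_i$, giving $\mu_1 \geq k$. The $\mu_1 - (k-1)$ singletons in that initial run must be positive and increasing, while every one of the remaining $n - \mu_1 + (k-1)$ entries is free to carry a sign; this is the source of the factor $2^{\,n-\mu_1+k-1}$. Your sentence ``$B_0$ has size $\mu_1 \geq k$'' cannot be right, and with your reading the listed pieces $|B_0|$, $t$ big-block segments, and a tail give $t+2$ parts rather than $t+1$.

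The second sum is likewise not the case ``$B_0$ itself plays the role of the first big block''---$B_0$ is the zero block and its entries are unbarred by definition. It is the case where $B_1$ already has size $k-1$; then the entries of $B_1$ are forced to be negative, and the remaining $n-k+1$ entries carry arbitrary signs, yielding $2^{\,n-k+1}$. Once you correct the identification of $\mu_1$ and the role of $B_0$ versus $B_1$, the rest of your outline goes through exactly as in the type $A$ argument, and there is no remaining obstacle with the powers of $2$.
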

\begin{proof}
It suffices to count the number of cells of rank $t(k-2)$. In the first summation, we are summing over set compositions for which $B_1$ is a singleton. In these cases, $\mu_1$ corresponds to all the 
singletons leading up to the first $k-1$ block $B_i$, as well as the elements of $B_i$. We can place signs on every element except the singletons leading up to $B_i$. This explains the power of $2$ in the 
summation. The rest of the terms come from the same arguments as the type $A$ case.

The second summation is over critical set compositions for which $B_1$ is of size $k-1$. In this case, we know the elements of $B_1$ must be all negative, but we are still free to choose the signs of the remaining $n - k+1$ elements. 
This explains the power of $2$ in the second summation. The rest of the second summation again follows from counting arguments as in the type $A$ case.\end{proof}
We note that this result specializes to a formula given by Bj\"orner and Sagan \cite{subspacesBD}, when $t=1$.

\subsection{The Type $D$ $3$-Equal Arrangement}
Next we study the type $D$ $3$-equal arrangement. Note that since $\mathscr{D}_{n,k} = \mathscr{B}_{n,k}$ for $k > 3$, this is the only case left to study for classical reflection groups. Studying the Betti numbers again reduces to counting critical cosets. The proof of the following result is similar to type $A$ and $B$, so we omit the details.
\begin{theorem}
 \label{thm:typeD} 
$H_{t}(\mathcal{M}(\mathscr{D}_{n,3}))$ is free abelian of rank 
\[\sum_{\substack{\mu \models n \\ \ell(\mu) = t+1 \\ \mu_m \geq 3, \forall 2 \leq m \leq t \\ \mu_1 \geq 4}} \binom{n}{\mu} 2^{n-\mu_1+2} \prod_{m=1}^t \binom{\mu_m - 1}{2} + \sum_{\substack{\mu \models n \\ \ell(\mu) = t+1 \\ \mu_m \geq 3, \forall 2 \leq m \leq t \\ \mu_1 = 3}} \binom{n}{\mu} 7*2^{n-3} \prod_{m=2}^t \binom{\mu_m - 1}{2} \]
\end{theorem}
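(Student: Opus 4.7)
The plan is to count critical cosets via Proposition \ref{prop:critcells}, paralleling the arguments used for types $A$ and $B$. For $k=3$, each component of $D[T]$ is a single vertex, so a critical coset $wW_I$ is determined by its maximal-length representative $w$ together with an antichain $T \subseteq Des(w)$ of size $t$, subject to the conditions that every element of $T$ have an $N^<_\mathscr{C}$-witness in $Des(w)$ and that every element of $Des(w) \setminus T$ be adjacent to some element of $T$. Following the type $B$ proof, I would encode such data as type $D$ signed set compositions $(B_1, \ldots, B_{t+1})$ of $\{1, \ldots, n\}$ with an independent sign assignment on each element, subject to the type $D$ parity constraint.

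For each index $2 \leq m \leq t$, the block $B_m$ corresponds to a maximal run of descents terminating at an element of $T$ together with its $N^<_\mathscr{C}$-witness. This forces $\mu_m \geq k = 3$, and there are $\binom{\mu_m - 1}{2}$ choices for the terminating pair; the final block $B_{t+1}$ is unconstrained. The distinction between the two summations comes entirely from how the first block $B_1$ interacts with the branched end of the $D_n$ Dynkin diagram. When $\mu_1 \geq 4$, the branch lies safely in the interior of $B_1$, the counting is formally identical to the type $B$ case, and one obtains $\binom{\mu_1 - 1}{2} \cdot 2^{n - \mu_1 + 2}$ configurations per composition, producing the first summation. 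When $\mu_1 = 3$, the block $B_1$ consists of exactly the three simple reflections meeting at the branch, and one must enumerate sign patterns directly: I claim that out of the $2^3 = 8$ assignments of signs to these three elements, exactly $7$ yield a valid critical coset once the descent and $N^<_\mathscr{C}$-witness conditions at the branch are imposed, so that this case contributes the factor $7 \cdot 2^{n-3}$ appearing in the second summation.

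The main technical obstacle is precisely the boundary case $\mu_1 = 3$: for each of the eight sign patterns on the three branch vertices, one must determine which descents the maximal-length representative $w$ carries and verify the $N^<_\mathscr{C}$-witness condition with respect to the tree-compatible order on $S$ from Figure \ref{fig:linearorder}, then isolate the unique excluded sign pattern. Once this case analysis is complete, summing over compositions $\mu \models n$ with the prescribed block-size constraints yields the two summations in the stated formula, and the freeness of $H_t(\mathcal{M}(\mathscr{D}_{n,3}))$ is inherited from Theorem \ref{thm:homology}(1).
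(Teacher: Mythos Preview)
Your plan matches the paper's: the authors say only that the proof ``is similar to type $A$ and $B$, so we omit the details,'' and counting critical cosets via type-$D$ signed set compositions, splitting on how the first chunk interacts with the branch of the Dynkin diagram, is exactly the intended argument. The first summation does indeed correspond to the case where both branch reflections $s_0,s_1$ are ascents, and there the count is formally the type-$B$ count.

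However, your account of the factor $7$ is not right. You write that ``the block $B_1$ consists of exactly the three simple reflections meeting at the branch'' and that $7$ of the $2^3$ sign patterns survive. First, the blocks $B_i$ contain signed elements of $[n]$, not simple reflections; these are different objects, linked only through the bijection between parabolic cosets and set compositions. Second, in the $\mu_1=3$ case the first chunk decomposes as a singleton $B_1$ followed by a pair $B_2$, and the descent conditions at the branch translate to: $s_2$ is a descent and at least one of $s_0,s_1$ is a descent. The data to enumerate is therefore not merely a sign pattern on three fixed elements --- one must also decide which signed element occupies $B_1$ versus $B_2$, while the sign on the element of smallest absolute value is absorbed by the global type-$D$ parity constraint. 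The seven valid signed $(B_1,B_2)$ prefixes do not arise as ``eight minus one''; both the $B_1/B_2$ split and the remaining free signs vary across them, and they are found by a direct case check. So your boundary case analysis must track more than sign patterns alone, though once carried out carefully it does produce $7$.
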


\subsection{The Finite Exceptional Coxeter Groups}
In this section, we detail methods for determining the Betti numbers for the remaining irreducible finite Coxeter groups. Let $W$ be an exceptional 
Coxeter group of rank $n$. One of our main results is that $Perm_k(W)$ has an optimal 
acyclic matching that only has unmatched elements in ranks that are multiples of $k-2$, and one unmatched element in rank $0$. When there is only one such rank 
satisfying these conditions, it is not hard to compute the Betti number. That is, one just computes the $f_i$ number of $i$-faces of $Perm_k(W)$, and then computes $\sum_{i=1}^n (-1)^i f_i$, the reduced Euler characteristic. The final result gives, up to sign, the rank of the only non-trivial homology group. 

We note that in some cases there are two ranks of nontrivial homology. By observation, this only occurs when $W$ is of type $E$. 
Clearly, after computing the Euler characteristic, determining one of the Betti numbers allows us to determine the other one. 
So of course, we have reduced our problem to the case of determining $\beta_{2(k-2)}$. In this case, we know that unmatched elements correspond to cosets $wW_I$ with maximum length 
representative $w$, where 
$I$ corresponds to a disjoint collection of components of size $k-2$ in the subgraph $D[Des(w)]$. Moreover, for each component $C$ there exists $x \in Des(w)$, $x < \min C$, such that 
$x \cup C$ is connected. Similarly, an unmatched element has some set of prescribed ascents as well. In other words, given sets $I$ and $J$ with $I \subseteq J \subseteq S$, 
we can determine if there exists an element $w$ such that $Des(w) = J$ and $wW_I$ is critical. 
Then it would remain to compute the number of elements $w$ with this given descent set $J$.

We determine necessary and sufficient conditions on $I$, $Des(w)$, and $S \setminus Des(w)$ such that 
a coset $wW_I$ with maximal length representative $w$ is unmatched. For a fixed $I$, let $D(I)$ and $A(I)$ be subsets of $S$ such that 
$wW_I$ is unmatched if and only if $D(I) \subset Des(w)$ and $A(I) \subset S \setminus Des(w)$ (given a particular choice of $I$, it is not too hard to determine $D(I)$ and $A(I)$). 
Let $\beta_I = |\{ w \in W: D(I) \subset Des(w) \mbox{ and } A(I) \subset S \setminus Des(w) \}|$. 
Finally, let $\mathscr{I}$ denote the set of all possible $I \subset S$ which are $2$ components, each of size $k-2$. Then $\beta_{2(k-2)} = \sum_{I \in \mathscr{I}} \beta_I$.
So it suffices to compute $\beta_I$. However, while counting the number of elements whose descent set contains a given set $J$ (by counting cosets of $W_J$), 
counting the number of elements with a set of prescribed descents and prescribed ascents involves using inclusion-exclusion, to restate the problem only in terms of enumerating elements with prescribed descents. 
Fix $I \in \mathscr{I}$, $T \subset A(I)$. Let $\mathscr{A}_{I,T} = \{wW_{T \cup D(I)}: w \in W \}$. As noted before, $\mathscr{A}_{I, T}$ corresponds to the number 
of elements $w \in W$ with $D(I) \cup T \subset Des(w)$. So, by inclusion-exclusion, $\sum_{T \subset A(I)} (-1)^{|T|} |\mathscr{A}_{I, T}|$ counts the number of elements $w \in W$ for which $D(I) \subset Des(w)$ and 
$A \subset S \setminus Des(w)$.

Thus we obtain $\beta_{2(k-1)} = \sum_{I \in \mathscr{I}} \sum_{T \subset A(I)} (-1)^{|T|} |\mathscr{A}_{I, T}|$. Naturally, this summation can be rather challenging to compute, although it is simple 
enough that it can be done by hand. For the remaining cases, this summation was used to determine $\beta_{2(k-2)}$. However, we omit the rather tedious computations. The resulting Betti numbers appear in the Appendix at the end of this paper.

\section{Proof of Optimality When $k=3$}
\label{sec:optimality}

 By Theorem \ref{thm:fundthm}, $Perm_3(W)$ is homotopy equivalent to some space 
$\Delta_c$ such that the $i$-cells in $\Delta_c$ are indexed by the unmatched $i$-cells of $Perm_3(W)$. We would like to show the boundary operator 
of $\Delta_c$ is the $0$-map, which would allow us to conclude that $\Delta_c$ is a minimal complex. 
Thus, we want to show the summation in Theorem \ref{thm:fundthm}, part c, 
is zero by constructing a sign-reversing involution on alternating directed paths between pairs of critical cells.
Most examples of discrete Morse theory in the literature have never had to use the boundary formula. 
Much 
like when $k > 3$, for most examples it is immediately clear that the boundary map is the $0$-map, because there are no cells in consecutive dimensions.

Given any coset $wW_I \in Perm_3(W)$, we can construct pairs of alternating directed paths $P$ and $Q$ with $\omega(P) + \omega(Q) = 0$, where both paths start at $wW_I$ and end with the same coset $vW_J$, and $w(P)$ is as defined in Theorem \ref{lem:clusterlem} part c.
In general, 
the ending coset $vW_J$ may not be critical. However, we show that given any alternating directed path between two critical cells, some subpath is identical to one generated by 
these algorithms. This fact is used to construct a sign-reversing involution on alternating directed paths between pairs of critical cells.

\begin{figure}[htbp]
 \begin{center}
  \includegraphics[height=6cm]{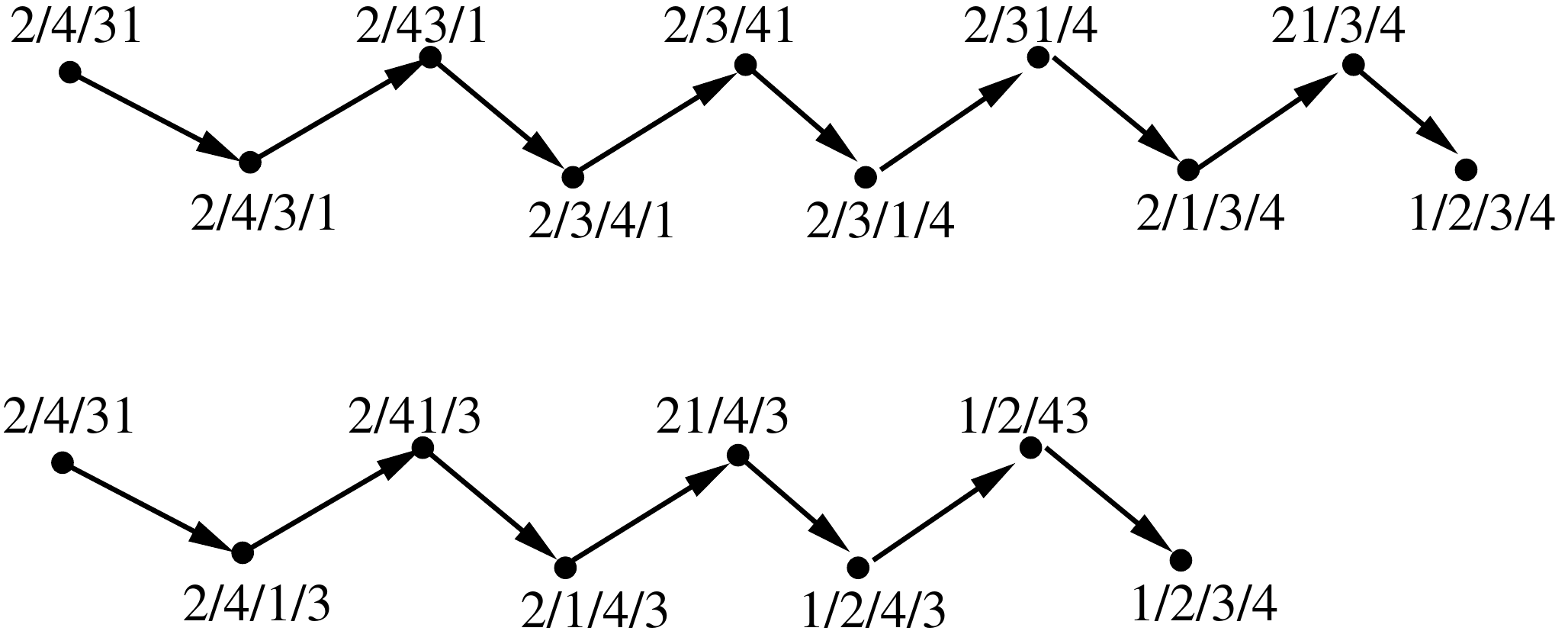}
 \end{center}
\caption{Examples of alternating directed paths in $A_3$}
\label{pathchaser}
\end{figure}

An example of paths coming from our construction is given in Figure \ref{pathchaser}. Recall that the faces of the $A_3$-permutahedron correspond to set compositions of $\{1, 2, 3, 4 \}$.
Figure \ref{pathchaser} has two alternating, directed paths that start and end with the same set compositions. We constructed these paths using an algorithm given below.

To make definitions easier, for this section we assume that when $W$ is irreducible, it is given one of the linear orders appearing in Figure \ref{fig:linearorder}. If $S$ decomposes as 
$S_1, \ldots, S_m$, where each of the $S_i$ are disjoint, and correspond to a connected component of $D(W)$, then we order the reflections so that every reflection of $S_i$ is less than every reflection of 
$S_j$, whenever $i < j$, and then order the reflections in each individual $S_i$ according to Figure \ref{fig:linearorder}. Again, the results actually hold for any tree-compatible order, however, the proofs and 
definitions are far more complicated. To keep the presentation simple, we shall only give the proofs for the cases where we have chosen the above linear orders. Finally, given a linear order on $S$, 
we extend it to a linear order on $S \cup \{\infty \}$ by making $\infty$ the unique largest element of the linear order.

Let $wW_I$ be a coset in $Perm_3(W)$ with maximal length representative $w$. In general, let $w_0(wW_I)$ denote the maximal length representative of a coset $wW_I$. 
If $M(wW_I) \neq wW_I$, we let 
$m(wW_I)$ denote the simple reflection that was added or removed when running the algorithm for $wW_I$. 
If $M(wW_I) = wW_I$, we let $m(wW_I) = \infty$. The elements of $I$ that are less than $m(wW_I)$ form a set $\{s_{i_1}, \ldots, s_{i_m} \}$ for some $m$. 
Moreover, these elements form an independent set of $D_w$, they each have a back neighbor in $D_w$, and all descents less than $m(wW_I)$ must be adjacent to some $s_{i_j} \in I$.
Given $s_i < s_j$, let $[s_i, s_j] = \{s_k: s_i \leq s_k \leq s_j \}$. Suppose $m > 1$. For $1 \leq j < m$, we let $A_{j} = [s_{i_j+2}, s_{i_{j+1}}]$. We define $A_0 = [s_1, \ldots, s_{i_1}]$, $A_m = [s_{i_m+2}, m(wW_I)]$. 
We call these \emph{ascending blocks}.

Given a coset $wW_I$ and a reflection $s \in I$, $s \leq m(wW_I)$, we construct two alternating directed paths that start with $wW_I$, and end at a coset 
$w'W_{I-s}$. The weights on these paths coming from Theorem \ref{thm:fundthm} part c cancel, and form the basis of our involution. In general, we show that any alternating directed path between two 
unmatched cosets must contain one of these constructed paths as a subpath. Then we define the involution by finding the first such subpath, and replacing it with its opposite construction.
We admit that this is a complicated involution.

Given a coset $wW_I$, let $s \in I$ such that $s \leq m(wW_I)$, and let $A$ be the ascending block containing $s$. The algorithm returns $\mathscr{P}$, a sequence of vertices of an alternating, directed path.

\textbf{Let} $J = I \setminus \{s \}$.

\textbf{Let} $u = w$.

\textbf{Let} $\mathscr{P} = (wW_I, uW_J)$.

 \textbf{While} $m(uW_J) \in A$

\hspace{1cm} \textbf{Let} $r = m(uW_J)$

\hspace{1cm} \textbf{Append} $M(uW_J)$ \textbf{to} $\mathscr{P}$

\hspace{1cm}    \textbf{Append} $urW_J$ \textbf{to} $\mathscr{P}$

\hspace{1cm} \textbf{Let} $u = w_0(urW_J)$.

\textbf{End While}

\textbf{Return} $\mathscr{P}$

The other algorithm only differs from the first algorithm by replacing the second line with \textbf{Let} $u = ws$.
Given a coset $wW_I$ and $s \in I$, $s \leq m(wW_I)$, let $p(wW_I, s)$ be the result of the first algorithm run with inputs $wW_I$ and $s$, and let $\hat{p}(wW_I,s)$ 
be the result of the second algorithm run with those inputs. Finally, let $\alpha(p(wW_I, s)) = \hat{p}(wW_I, s)$, and $\alpha(\hat{p}(wW_I, s)) = p(wW_I, s)$.
The paths from Figure \ref{pathchaser} are examples of paths created by this algorithm. Again, note that the algorithm is defined for more than just critical elements.

\begin{lemma}
 \label{lem:pathscancel}
Let $wW_I \in Perm_3(W)$, with maximal length element $w$, and consider $s \in I$, $s \leq m(wW_I)$. Then $\omega(\hat{p}(wW_I, s)) + \omega(p(wW_I, s)) = 0$, 
where $\omega$ is defined in Theorem \ref{thm:fundthm} part c. Moreover, these paths end at the same coset.
\end{lemma}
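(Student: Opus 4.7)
My plan is to verify both assertions of the lemma: the endpoint coincidence and the identity $\omega(p)+\omega(\hat p)=0$.

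First I would set up the ambient structure. For our chosen linear order on $S$, every reflection in $A$ commutes with every reflection in $J$: $A$ is a contiguous block meeting $I$ only at $\{s\}$, $J=I\setminus\{s\}$ is independent in the Dynkin diagram, and these two facts force separation of $A$ from $J$ in the diagram. Hence $W_{A\cup J}\cong W_A\times W_J$, every coset visited by either path lies inside $wW_{A\cup J}$, and each loop iteration acts by right multiplication by some $r\in A$ on the $W_A$-coordinate via the square $waW_{J+r}$.

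Next I would show that the two paths end at the same coset. Let $y_p,y_{\hat p}\in W_A$ denote the cumulative right-multipliers produced by the loops. I plan to prove the identity $y_p=sy_{\hat p}$ by induction on iterations: while both paths continue looping, the matching algorithm selects the same reflection $r\in A$ on $vW_J$ and on $vsW_J$, because $Des(v)$ and $Des(vs)$ agree on $A\setminus\{s\}$ (using $s\in Des(v)$, that $s$ commutes with $W_J$, and the structure of the ascending block $A$). If one path's loop exits first, a case analysis of the matching algorithm shows the remaining iterations of the other path multiply cumulatively by exactly $s$ in $W_A$. Since $W_A\cap W_J=\{e\}$, the identity $y_p=sy_{\hat p}$ then yields $wy_pW_J=wsy_{\hat p}W_J$, so the endpoints coincide.

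For the weight identity I would exploit a telescoping cancellation in the formula of Theorem~\ref{thm:fundthm}(c). For each loop iteration $i$, the two incidences $[a_i:u(a_i)]$ and $[a_{i+1}:u(a_i)]$ (with the convention $a_{t+1}:=f(\tau)$) are incidences of two opposite faces of the same cube $u(a_i)$, so their product equals $-1$. Consequently
\[ \omega=(-1)^t\cdot[a_1:f(\sigma)]\cdot\prod_{i=1}^{t}\bigl([a_i:u(a_i)]\cdot[a_{i+1}:u(a_i)]\bigr)=(-1)^t\cdot[a_1:f(\sigma)]\cdot(-1)^t=[a_1:f(\sigma)]. \]
Therefore $\omega(p)=[wW_J:wW_I]$ and $\omega(\hat p)=[wsW_J:wW_I]$, which have opposite signs because $wW_J$ and $wsW_J$ are the two $s$-opposite faces of the cube $wW_I$. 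Summing gives $\omega(p)+\omega(\hat p)=0$, independent of the path lengths.

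The hard part will be the inductive step for the endpoint claim, particularly the case in which one loop exits before the other. There the challenge is to verify that the remaining iterations of the still-active path accumulate cumulatively to exactly $s$ in $W_A$, which reduces to an identity within the Coxeter subgroup $W_A$ governing how the matching algorithm interacts with right multiplication by~$s$. Once this is in place, the sign cancellation follows at once from the telescoping computation above.
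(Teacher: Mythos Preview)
Your telescoping computation for the weight identity is exactly the paper's argument: each matched step contributes $[a_i:u(a_i)][a_{i+1}:u(a_i)]=-1$ from opposite faces of a cube, so $\omega$ collapses to the initial incidence, and $[wW_J:wW_I]=-[wsW_J:wW_I]$ since these too are opposite faces of the cube $wW_I$.

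For the endpoint coincidence the paper takes a shorter route that eliminates your hard case entirely. Rather than compare the two runs step by step, it observes that each loop iteration right-multiplies the current representative by an element of $A$, so the terminal representative lies in $wW_A$ for either path (and $ws\in wW_A$ since $s\in A$); the loop exits only when $m(uW_J)\notin A$, which forces $A\cap Des(u)=\emptyset$, so the terminal representative is the unique minimum-length element of $wW_A$. The two endpoints therefore coincide with no parallel tracking needed. Your inductive plan might be completable, but the invariant you propose is not obviously maintained: the predecessor $s_{i_j-1}$ of $s=s_{i_j}$ lies in $A$ whenever $|A|\ge 2$, is a descent of $w$ (it is the required back-neighbor of $s_{i_j}$), and does not commute with $s$, so after one iteration the two current representatives need not differ simply by right-multiplication by $s$. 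Separately, your justification that $A$ commutes with $J$---that $A\cap J=\emptyset$ and $J$ independent ``force separation''---is incomplete as stated; those two facts alone do not preclude adjacency, and the actual reason involves the gap vertex $s_{i_{j-1}+1}$ sitting between $A$ and the nearest smaller element of $J$.
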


\begin{proof}
 Note that $Perm_3(W)$ is a cubical complex. In particular, given a coset $wW_I$ with maximum length element $w$, and $s \in S$, we see that the faces corresponding to 
$wsW_{I - s}$ and $wW_{I-s}$ are parallel faces of $wW_I$. In particular, one can show that $[wW_{I-s}: wW_I] = - [wsW_{I-s}: wW_I]$. 
We see that the products of incidence numbers appearing in the 
formula for $\omega(p(wW_I, s))$ are all $-1$, and the number of $-1$ terms is the same as the power of $-1$ appearing outside the product. Thus $p(wW_I, s) = [wW_{I-s}:wW_I]$, and similarly, $\hat{p}(wW_I, s) = [wsW_{I-s}:wW_I]$. 
However, these incident numbers are additive inverses, so their sum is $0$. The first result follows.

For the second result, consider the coset $wW_A$. Consider running the algorithm for $p(wW_I, s)$. At the $i$th step of the algorithm we consider $t_i = m(uW_{I -s})$ for some $u$, and append 
$ut_iW_{I - s_i}$ to the end of the path. Let $t_1, \ldots, t_r$ be the resulting sequence of reflections. Note that these reflections all come from $A_i$, and we see that $w t_1 \cdots t_r$ is an 
element of the last coset when the algorithm terminates. Observe that, since the algorithm terminates, $A \cap Des(w t_1 \cdots t_r) = \emptyset$. At each step, one can show that $w t_1 \cdots t_k$ is the maximum element of $w t_1 \cdots t_k W_J$. Since $A$ is a set of ascents for $w t_1 \cdots t_r$, $w t_1 \cdots t_r$ is the minimum length element of $wW_{A}$.
By similar arguments for $\hat{p}(wW_I, s)$, we obtain another sequence $t'_1, \ldots, t'_r$, such that the last coset of the path is $wt'_1\cdots t'_r w_J$, and $wt'_1\cdots t'_r$ is the minimum length element of $wW_A$. Thus, the paths $p(wW_I, s)$ and $\hat{p}(wW_I, s)$ end at the same coset $wt_1 \cdots t_r W_J$.
\end{proof}

We show that the following proposition is true, and we  
use it to define an involution. This proposition explains why we have been defining everything in this section for arbitrary cosets in $Perm_3(W)$, and not just critical ones.
\begin{proposition}
\label{prop:pathpattern}

Fix cosets $uW_I, vW_J \in Perm_3(W)$, with maximal coset representatives $u, v$, and let $P$ be an alternating, directed path from $uW_I$ to $vW_J$. Assume that $vW_J$ is 
critical, and that either $uW_I$ is critical, or $m(uW_I) \in I$. Then there exists an integer $m$,
paths $P_j$, cosets $w_jW_{I_j}$, and simple reflections $s_j$ for $j \in [m]$, paths $Q_j$ for $2 \leq j \leq m$, and a path $R$ such that:
\begin{enumerate}
 \item Either $p(w_jW_{I_j}, s_j) = P_j Q_j$ or $\hat{p}(w_jW_{I_j}, s_j) = P_j Q_j$ for $2 \leq j \leq m$,
 \item Either $p(w_1W_{I_1}, s_1) = P_1$ or $\hat{p}(w_1W_{I_1}, s_1) = P_1$,
 \item $m(w_jW_{I_j}) \in I_j$ for all $j$,
 \item $s_j \leq m(w_jW_{I_j})$ for all $j$,
 \item $P = P_m P_{m-1} \cdots P_1 R$,
 \item $w_mW_{I_m} = uW_I$.
\end{enumerate}

\end{proposition}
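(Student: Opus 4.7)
My plan is to prove the proposition by induction on the length of $P$, greedily identifying a maximal initial segment of $P$ that agrees with a $p$ or $\hat{p}$ path and then recursing on the tail.

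\emph{Identifying the initial segment.} Since $vW_J$ is critical (hence unmatched), every alternating directed path into $vW_J$ must start with a downward edge; in particular, $P$ begins with an edge $uW_I \succ a_1$. Because $Perm_3(W)$ is a cubical complex (remark following Corollary~\ref{cor:complex}) and $uW_I$ is an $|I|$-cube, $a_1$ must be one of the two parallel codimension-one faces $uW_{I\setminus\{s_m\}}$ or $us_m W_{I\setminus\{s_m\}}$ for some $s_m \in I$. Thus the first edge of $P$ coincides with the first edge of either $p(uW_I,s_m)$ or $\hat p(uW_I,s_m)$. Call this reference path $P'$. The hypothesis guarantees the reference path is defined: if $uW_I$ is critical then $m(uW_I)=\infty$ and any $s_m$ is legal, while if $m(uW_I)\in I$ then the fact that $P$ must eventually reach the unmatched cell $vW_J$ forbids descending into the matched partner $uW_{I\setminus\{m(uW_I)\}}$, and a short argument using the tree-compatible order forces $s_m \leq m(uW_I)$.

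\emph{Peeling off a segment and recursing.} I next compare $P$ with $P'$ one down--up pair at a time. Because every upward step in an alternating directed path is determined by the matching $u(\cdot)$, the paths $P$ and $P'$ agree on all up-edges as long as they agree on the preceding down-edges; divergence can therefore occur only at a downward edge from an ``up-coset'' produced by the matching. Let $P_m$ be the maximal common prefix. Two cases occur. If $P_m$ is a proper prefix of $P'$, let $w_{m-1}W_{I_{m-1}}$ be its endpoint and $Q_m$ the remainder of $P'$; this endpoint is a down-coset currently being inspected by the algorithm's while loop, and so $m(w_{m-1}W_{I_{m-1}})$ is defined and lies in $I_{m-1}$, verifying the inductive hypothesis for the tail. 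If instead $P_m=P'$, then the algorithm has run to completion and $P_m$ is a full $p$ or $\hat p$ path; in this case I designate $P_m$ as the terminal complete piece $P_1$ (with any further tail becoming $R$) and the recursion stops. Otherwise, I apply the induction hypothesis to the sub-path from $w_{m-1}W_{I_{m-1}}$ to $vW_J$, obtaining a decomposition $P_{m-1}\cdots P_1 R$, and concatenate with $P_m$ to finish.

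\emph{Main obstacle.} The crux is the verification, at each deviation point, that $m(w_{j-1}W_{I_{j-1}})\in I_{j-1}$ so that induction applies. This requires a careful analysis of three ingredients: (i)~the internal structure of the $p$ and $\hat p$ algorithms, especially how the inner while loop processes the ascending blocks $A_0,\dots,A_m$ of the chosen linear order; (ii)~the cubical geometry of $Perm_3(W)$, which pins down exactly two parallel down-faces in each coordinate direction and hence controls the possible deviations; and (iii)~the global constraint that $P$ must reach the unmatched critical coset $vW_J$, which rules out descents into matched partners and forces deviations to occur only at cosets whose matching is still a ``removal'' of an element of~$I$. A secondary bookkeeping point is distinguishing the final complete segment~$P_1$ from the intermediate prefixes~$P_j$: after the last recursive deviation, no further switch is forced, so the algorithmic path runs to completion, and the (possibly empty) remainder $R$ captures whatever portion of $P$ remains before reaching $vW_J$.
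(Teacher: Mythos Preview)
Your plan is the same as the paper's: induct on the length of $P$, pin down the first down-edge, compare greedily with the matching $p$ or $\hat p$ path, and recurse from the divergence point after checking that the inductive hypothesis $m(\,\cdot\,)\in I_{j-1}$ holds there.

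Two points where your write-up is vaguer or slightly off compared to the paper, and worth tightening:

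\textbf{The bound $s_m\le m(uW_I)$.} Your appeal to ``reaching the unmatched cell $vW_J$'' and ``a short argument using the tree-compatible order'' is not the actual mechanism. The paper's argument is direct and does not use the order: if $s>m(uW_I)$ and the first edge goes to $uW_{I-s}$, then running the matching algorithm on $uW_{I-s}$ proceeds exactly as on $uW_I$ until it hits $m(uW_I)\in I-s$ and removes it, so $uW_{I-s}$ is matched \emph{downward} to $uW_{I-s-m(uW_I)}$. But the next step of an alternating directed path must be an up-step along the matching, so no such path exists (and $uW_{I-s}$ cannot be the terminal $vW_J$, since it is matched). That is the whole argument; the tree-compatible order is not invoked here.

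\textbf{The divergence point.} Your description ``a down-coset currently being inspected by the algorithm's while loop'' has the parity reversed. Since up-steps are forced, divergence occurs at a down-step; hence the last common coset is one reached by an up-step, namely $M(uW_J)=uW_{J+r}$ for some intermediate $uW_J$. For this coset the matching removes $r$, so indeed $m(uW_{J+r})=r\in J+r$, which is exactly the inductive hypothesis you need. The paper states this as ``$M(wW_K)\subset wW_K$''. Once you correct the parity, your inductive step goes through.

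These are local fixes; the architecture of your proof matches the paper's.
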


\begin{proof}
 We prove the result by induction on the length of $P$. Clearly $P$ has at least one edge. 
We claim that this edge must be of the form  $uW_I > uW_{I-s}$ or $uW_I > usW_{I-s}$ for some $s \in I$, $s \leq m(uW_I)$. 
Clearly this is the case if $m(uW_I) = \infty$, so suppose $uW_I$ is not critical, and the first edge is of the form $uW_I > uW_{I - s}$ for some $s > m(uW_I)$. Then we note that the matching algorithm matches $uW_{I - s}$ with $uW_{I-s-m}$ where $m = m(uW_I)$. However, this means that $P$ is not a directed alternating path, a contradiction.

Assume the first edge is of the form $uW_I uW_{I-s}$, where $s \in I$, and $s \leq m(uW_I)$. 
Then there exists $P', Q', R'$ such that $p(uW_I, s) = P' Q'$ and $P = P' R'$. Let 
$P'$ be the maximum of all such paths, let $wW_K$ be the last coset of $P'$. If $wW_K = vW_J$, then we are done.

Otherwise, observe that $M(wW_K) \subset wW_K$. That is, the last edge of $P'$ must be from the matching in order for $P$ to be an alternating directed path. In particular, the last edge must be of the form $M(wW_K), wW_K$. Therefore $m(wW_K) \in wW_K$.
By induction, there exists an integer $m$, simple reflections $s_j$, cosets $w_jW_{I_j}$, paths $P_j, Q_j$ and $R$ statisfying 1-6 for the 
path $R'$ from $wW_K$ to $vW_J$. Let $w_{m+1}W_{I_{m+1}} = uW_I$, $Q_{m+1} = Q', P_{m+1} = P'$, $s_{m+1} = s$. Clearly we have properties 1-6 for this collection.
A similar argument holds if the first edge of $P$ is of the form $uW_I > usW_{I-s}$ for some $s \in I$, $s < m(uW_I)$.
\end{proof}

Fix critical cells $uW_I$ and $vW_J$ with $|J| = |I| - 1$. Let $\mathscr{P}(uW_I, vW_J)$ denote the set of all alternating directed paths from $uW_I$ to $vW_J$. We wish to construct an involution 
on these paths. Let $P \in \mathscr{P}(uW_I, vW_J)$. Let $R, P_m, \ldots, P_1$ be paths which satisfy all the properties of Proposition \ref{prop:pathpattern} for $P$. 
Let $\alpha(P) = P_m \cdots, P_2, \alpha(P_1) R$. We claim that $\omega(\alpha(P)) + \omega(P) = 0$. Clearly $\omega(\alpha(P)) + \omega(P) = (\omega(\alpha(P_1)) + \omega(P)) \omega(R) \prod_{i=2}^m \omega(P_i) = 0$, 
since $\omega(\alpha(P_1)) + \omega(P_1) = 0$ by Lemma \ref{lem:pathscancel}. Also we see that $\alpha$ is an involution. Applying Theorem \ref{thm:fundthm} to $Perm_3(W)$, we get a complex 
$\Delta_c$ homotopy equivalent to $Perm_3(W)$. Moreover, as a result of our involution calculation, $[uW_I: vW_J] = 0$ in $\Delta_c$, and hence the boundary operator is the 
$0$-map. We can conclude:
\begin{theorem}
\label{optimalitykis3}
 The matching $M$ is an optimal matching for $Perm_3(W)$. 
\end{theorem}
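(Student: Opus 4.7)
The plan is to apply Theorem \ref{thm:fundthm} to the acyclic matching $M$ on $Perm_3(W)$ and show that the resulting Morse complex $\Delta_c$ has a trivial boundary operator, which immediately forces it to be minimal. By Theorem \ref{thm:fundthm}(b), $\Delta_c$ is homotopy equivalent to $Perm_3(W)$ with exactly one $i$-cell for each critical $i$-cell of the matching, so the question reduces to showing that the incidence numbers $[uW_I : vW_J]$ in $\Delta_c$ vanish for every pair of critical cells with $\dim uW_I = \dim vW_J + 1$. By Theorem \ref{thm:fundthm}(c), this incidence number is a signed sum $\sum_c \omega(c)$ over alternating directed paths $c$ in $Perm_3(W)$ from $uW_I$ to $vW_J$, so it suffices to construct a fixed-point-free sign-reversing involution $\alpha$ on the set of such alternating paths.

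First I would invoke Lemma \ref{lem:pathscancel}, which already supplies the local cancellation mechanism: for any coset $wW_I$ and any $s \in I$ with $s \leq m(wW_I)$, the two paths $p(wW_I,s)$ and $\hat{p}(wW_I,s)$ end at the same coset and satisfy $\omega(p(wW_I,s)) + \omega(\hat{p}(wW_I,s)) = 0$. These paired local paths form the building blocks for the involution. Next I would invoke Proposition \ref{prop:pathpattern} to decompose any alternating directed path $P$ from the critical cell $uW_I$ to the critical cell $vW_J$ as a concatenation $P = P_m P_{m-1} \cdots P_1 R$, where each $P_j$ (except possibly $P_1$) together with a trailing $Q_j$ equals one of $p(w_jW_{I_j}, s_j)$ or $\hat{p}(w_jW_{I_j}, s_j)$, and $P_1$ itself is already one of these local paths. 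This decomposition is canonical once we insist on choosing $P_1$ of maximal length within $P$.

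With this canonical decomposition in hand, I would define $\alpha(P) = P_m \cdots P_2\, \alpha(P_1)\, R$, where $\alpha$ swaps the two members of each local pair: $\alpha(p(w_1W_{I_1},s_1)) = \hat{p}(w_1W_{I_1},s_1)$ and vice versa. Since $\alpha(P_1)$ has the same endpoints as $P_1$, the result is still a legitimate alternating directed path from $uW_I$ to $vW_J$, and the weight computation factors as
\[
\omega(\alpha(P)) + \omega(P) = \Bigl(\prod_{i=2}^{m} \omega(P_i)\Bigr)\,\bigl(\omega(\alpha(P_1)) + \omega(P_1)\bigr)\,\omega(R) = 0
\]
by Lemma \ref{lem:pathscancel}. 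Involutivity of $\alpha$ is immediate since $\alpha$ swaps paired local paths and the decomposition of $\alpha(P)$ recovers the same factorization data.

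The main obstacle, and the point where care is required, is verifying that $\alpha$ is well defined as an involution on the set of alternating paths between two given critical cells. This hinges entirely on the canonicity of the decomposition from Proposition \ref{prop:pathpattern}: one must check that applying $\alpha$ does not alter the factorization indices $w_j, I_j, s_j$ for $j \geq 2$, and that the swapped first block $\alpha(P_1)$ is again recognized as the maximal such local subpath of $\alpha(P)$. Granted the canonical choice rule (take $P_1$ of maximal length inside $P$), the decomposition depends only on the terminal portion of $P$ following the first cancellable block, which $\alpha$ leaves untouched; hence $\alpha \circ \alpha = \mathrm{id}$. Summing $\omega$ over $\alpha$-orbits then gives $[uW_I : vW_J] = 0$ in $\Delta_c$, so the cellular boundary vanishes, the cellular chain groups of $\Delta_c$ coincide with its homology, and $\Delta_c$ is minimal, which is precisely the optimality of $M$.
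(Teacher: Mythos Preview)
Your proposal is correct and follows essentially the same approach as the paper: decompose each alternating path via Proposition~\ref{prop:pathpattern}, swap the block $P_1$ with its partner via Lemma~\ref{lem:pathscancel}, and conclude that the incidence numbers in $\Delta_c$ vanish. The only minor imprecision is in your discussion of canonicity: the decomposition in Proposition~\ref{prop:pathpattern} is built inductively from the \emph{front} of the path (so $P_m$ is the first block encountered, not $P_1$), and each $P_j$ is chosen maximal at its step; thus the prefix $P_m\cdots P_2$ is determined before $P_1$ and is left untouched by $\alpha$, which is what makes $\alpha$ an involution. The paper itself simply asserts that $\alpha$ is an involution without elaboration, so your added discussion, once the direction is corrected, is a welcome clarification.
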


\section{Conclusion and Open Problems}
We conclude with several open problems. First, it would be nice to understand the cohomology ring structure of $Perm_k(W)$, and the attachment maps of the minimcal cell complex we get via discrete Morse theory. It is already known how to use discrete Morse theory to study cup products, so there is hope in this direction. However, computing attachment maps is a very challenging problem.

It is also interesting to note that there is a natural group action of $W$ on $Perm_k(W)$, and this induces a group action on the cohomology groups. It would be of note of this group action is isomorphic to the group action on the cohomology of the complement. Moreover, could the representation be understood by acting on the (co)homology basis we have constructed? The first step here would be to understand our homology basis in terms of representative cycles in $Perm_k(W)$.

\bibliographystyle{amsplain}
\bibliography{k_parbib}

\pagebreak
\section{Appendix}

\begin{table}[htbp]
\begin{tabular}{|l|c|c|c|}
\hline
 Group & $k$ & $i$ & $\widetilde{\beta}_i(\mathcal{M}(\mathscr{W}_{n,k}))$ \\ \hline
$H_3$ & $3$ & $1$ & $31$ \\ \hline
$H_4$ & $3$ & $1$ & $3601$ \\ \hline
$H_4$ & $4$ & $2$ & $719$ \\ \hline
$F_4$ & $3$ & $1$ & $289$ \\ \hline
$F_4$ & $4$ & $2$ & $47$ \\ \hline
$E_6$ & $3$ & $1$ & $7201$ \\ \hline
$E_6$ & $3$ & $2$ & $720$ \\ \hline
$E_6$ & $4$ & $2$ & $5039$ \\ \hline
$E_6$ & $5$ & $3$ & $1441$ \\ \hline
$E_6$ & $6$ & $4$ & $125$ \\ \hline
$E_7$ & $3$ & $1$ & $135073$ \\ \hline
$E_7$ & $3$ & $2$ & $135072$ \\ \hline
$E_7$ & $4$ & $2$ & $141119$ \\ \hline
$E_7$ & $5$ & $3$ & $60481$ \\ \hline
$E_7$ & $6$ & $4$ & $11591$ \\ \hline
$E_7$ & $7$ & $5$ & $757$ \\ \hline
$E_8$ & $3$ & $1$ & $10946881$ \\ \hline
$E_8$ & $3$ & $2$ & $54492480$ \\ \hline
$E_8$ & $4$ & $2$ & $12337919$ \\ \hline
$E_8$ & $4$ & $4$ & $2177280$ \\ \hline
$E_8$ & $5$ & $3$ & $7257601$ \\ \hline
$E_8$ & $6$ & $4$ & $2600639$ \\ \hline
$E_8$ & $7$ & $5$ & $2600639$ \\ \hline
$E_8$ & $8$ & $6$ & $60481$ \\ \hline
\end{tabular}
\caption{nonzero Betti numbers of exceptional Coxeter groups}
 \label{exceptionalbettis}
\end{table}

\end{document}